\documentclass[12 pt]{amsart}
\usepackage{amssymb, mathtools, amsfonts, amsthm, graphics,mathrsfs, mathtools}
\usepackage[usenames, dvipsnames]{xcolor}
\usepackage[hmargin=1 in, vmargin = 1 in]{geometry}
\usepackage{tikz-cd}
\usetikzlibrary{matrix, calc, arrows} 
\usepackage{hyperref}
\usepackage[all]{xy}
\usepackage{ytableau}
\usepackage{soul,dirtytalk}
\usepackage[capitalise, noabbrev]{cleveref}
\usepackage{graphicx,pgf,tikz}


\usepackage{twemojis}
\usepackage{mathdots}

\hyphenation{Grothen-dieck}

\definecolor{darkblue}{rgb}{0.0,0,0.7}

\newcommand{\newword}[1]{\textcolor{darkblue}{\textbf{\emph{#1}}}}


\newtheorem{Theorem}{Theorem}[section]

\newtheorem{corollary}[Theorem]{Corollary}
\newtheorem{conjecture}[Theorem]{Conjecture}

\newtheorem{remark}[Theorem]{Remark}

\newtheorem{definition}[Theorem]{Definition}
\newtheorem{proposition}[Theorem]{Proposition}
\newtheorem{prop}[Theorem]{Proposition}

\newtheorem{theorem}[Theorem]{Theorem}
\newtheorem{theorem*}{Theorem}

\newtheorem{lemma}[Theorem]{Lemma}

\numberwithin{equation}{section}

\newcommand{\rk}{\mathrm{rk}}
\newcommand{\des}{\mathrm{des}}
\newcommand{\ess}{\mathrm{ess}}
\newcommand{\Mat}{\mathrm{Mat}}
\newcommand{\perm}{\mathrm{Perm}}

\newcommand{\init}{\mathrm{in}}

\newcommand{\inv}{\mathrm{inv}}
\newcommand{\codim}{\mathrm{codim}}


\newcommand{\chains}{\mathrm{chains}}

\newcommand{\maj}{\mathrm{maj}}

\newcommand{\Z}{\mathbb{Z}}

\newcommand{\hilb}{\mathrm{Hilb}}
\newcommand{\field}{\kappa}


\theoremstyle{remark}

\newenvironment{example}
  {\pushQED{\qed}\examplex}
  {\popQED\endexamplex}

\usepackage[colorinlistoftodos]{todonotes}

\newcommand{\asm}{{\sf ASM}}

\newcommand{\anti}{{\sf anti}}

\title{Algebra and geometry of ASM weak order}

\author{Laura Escobar}
\address[LE]{Dept.\ of Mathematics, University of California, Santa Cruz CA 95064}
\email{lauraescobar@ucsc.edu}

\author{Patricia Klein}
\address[PK]{Department of Mathematics, Texas A\&M University, College Station TX 77840}
\email{pjklein@tamu.edu}

\author{Anna Weigandt}
\address[AW]{School of Mathematics, University of Minnesota, Minneapolis MN 55455}
\email{weigandt@umn.edu}

\thanks{LE was partially funded by NSF CAREER grant DMS-2142656. PK was partially funded by NSF grant DMS-2246962 and by the Travel Support for Mathematicians gift MP-TSM-00002939 from the Simons Foundation. AW was partially funded by NSF grant DMS-2344764. PK worked on this project while she was a member at the Institute for Advanced Study with support from the Bob Moses Fund. She thanks the IAS for its hospitality and support.}

\date{\today}
\keywords{}

\makeatletter
\@namedef{subjclassname@2020}{%
  \textup{2020} Mathematics Subject Classification}
\makeatother

\subjclass[2020]{}

\begin{document}

\begin{abstract}
Much of modern Schubert calculus is centered on Schubert varieties in the complete flag variety $\mathcal{F}\ell(n)$ and on their classes in the integral cohomology ring $\mathcal{H}^\ast(\mathcal{F}\ell(n))$. Under the Borel isomorphism, these classes are represented by distinguished polynomials called Schubert polynomials, introduced by Lascoux and Sch\"utzenberger.  Schubert polynomials form an additive basis of $\mathcal{H}^\ast(\mathcal{F}\ell(n))$.

Knutson and Miller showed that Schubert polynomials are multidegrees of matrix Schubert varieties, affine varieties introduced by Fulton, which are closely related to Schubert varieties.  Many roads to studying Schubert polynomials pass through unions and intersections of matrix Schubert varieties.  The third author showed that the natural indexing objects of arbitrary intersections of matrix Schubert varieties are alternating sign matrices (ASMs), which had previously enjoyed rich study in enumerative combinatorics and, as the six-vertex ice model, in statistical mechanics.  Intersections of matrix Schubert varieties are now called ASM varieties.  Every ASM variety is expressible as a union of matrix Schubert varieties.

Many fundamental algebro-geometric invariants (e.g., codimension, degree,
and Castelnuovo–Mumford regularity) are well understood combinatorially for matrix Schubert varieties, substantially via the combinatorics of strong Bruhat order on $S_n$.  The extension of strong order to $\asm(n)$, the set of $n \times n$ ASMs, has so far  not borne as much algebro-geometric fruit for ASM varieties.

Hamaker and Reiner proposed an extension of weak Bruhat order from $S_n$ to $\asm(n)$, which they studied from a combinatorial perspective.  In the present paper, we place this work on algebro-geometric footing.  We use weak order on ASMs to give a characterization of codimension of ASM varieties.  We also show that weak order operators commute with K-theoretic divided difference operators and that they satisfy the same derivative formula that facilitated the first general combinatorial computation of Castelnuovo–Mumford regularity of matrix Schubert varieties.  Finally, we build from these results to generalizations that apply to arbitrary unions of matrix Schubert varieties.
\end{abstract}

\maketitle

\section{Introduction}\label{sec:intro}

Schubert varieties in the complete flag variety are central objects of study within Schubert calculus. The complete flag variety $\mathcal{F}\ell(n)$ is the left quotient of $\mbox{GL}(n)$ by the Borel subgroup $B_-$ of lower triangular matrices. 
The Borel subgroup of upper triangular matrices $B_+$ acts on $\mathcal{F}\ell(n)$ on the right by multiplication. The closures of the orbits under this action are called Schubert varieties, which are indexed by permutations $w \in S_n$.  
The classes of Schubert varieties form an additive basis for the integral cohomology ring of the complete flag variety.  Under the Borel isomorphism \cite{Bor53}, representatives of these classes are given by Schubert polynomials $\mathfrak S_w$, which were introduced by Lascoux and Sch\"utzenberger \cite{LS82} and defined directly from the combinatorics of the permutation $w$.  
Similarly, double Schubert polynomials, which can also be defined combinatorially, are representatives of Schubert classes in the torus-equivariant cohomology ring of $\mathcal{F}\ell(n)$.

Knutson and Miller \cite{KM05} showed that Schubert polynomials and double Schubert polynomials also arise as multidegrees of Fulton's matrix Schubert varieties, which are affine varieties closely related to Schubert varieties \cite{Ful92}.  Given a Schubert variety $\mathcal{X}_w$ in $\mathcal{F}\ell(n)$, the associated matrix Schubert variety $X_w$ is the Zariski closure of $B_-wB_+$ in the space of $n \times n$ matrices, where $w$ is interpreted as a permutation matrix.  (For a description via defining equations, see \cref{subsect:preliminaries-definitions}.)  
Because multidegrees are preserved under Gr\"obner degeneration, Schubert polynomials can also be read from Gr\"obner degenerations of matrix Schubert varieties, as Knutson and Miller did by using Bergeron and Billey's \cite{BB93} pipe dreams to index antidiagonal initial varieties of matrix Schubert varieties. Recently, this article's second and third authors \cite{KW23} gave a similar connection between diagonal initial schemes of matrix Schubert varieties (which need not be reduced) and Lam, Lee, and Shimozono's \cite{LLS21} bumpless pipe dreams.  

A primary motivation in modern Schubert calculus is to understand the Schubert structure constants, i.e., the coefficients $c_{u,v}^w$ arising in the expansion $[\mathcal{X}_u][\mathcal{X}_v] = \sum c_{u,v}^w [\mathcal{X}_w]$ in the integral cohomology ring of $\mathcal{F}\ell(n)$.  These coefficients coincide with those in the expansion $\mathfrak S_u \mathfrak S_v = \sum c_{u,v}^w \mathfrak S_w$ of a product of Schubert polynomials as a sum of other Schubert polynomials. Several authors have understood special cases of this problem in terms of pipe dream combinatorics (see, e.g., \cite{Kog00, BB93, BHY19, HP23, Hua23}).

In order to further our understanding of this framework, we are motivated to study unions and intersections of matrix Schubert varieties.  This is true both because these unions and intersections allow for inclusion-exclusion counting of multidegrees and also because unions of matrix Schubert varieties arise naturally from Gr\"obner degenerations of matrix Schubert varieties that are compatible with moves on bumpless pipe dreams \cite{KW23}.  

The third author \cite{Wei17} showed that alternating sign matrices, combinatorial objects with a rich history in enumerative combinatorics (see, e.g., \cite{MRR83,Zei96, Kup96} and citations therein), are the indexing objects of arbitrary intersections of matrix Schubert varieties.  For this reason, we call an intersection of matrix Schubert varieties an alternating sign matrix (ASM) variety.

Our broad goal within this program is to understand multidegrees of arbitrary unions and intersections of matrix Schubert varieties in a manner that allows us to glean information about Schubert structure constants. One step in this program is to understand basic invariants of ASM varieties, such as codimension. Ultimately, one would want to be able to read algebro-geometric invariants of the ASM variety $X_A$ from the ASM $A$ with the same ease that these invariants can be read for a matrix Schubert variety $X_w$ from the permutation $w$, as is done in \cite{Ful92, RRW22, RRRSDW21, PY24}, for example.

Traditionally, the combinatorial tool underpinning algebro-geometric inquiry into matrix Schubert varieties has been strong (Bruhat) order, under which the lattice of ASMs emerges as the MacNeille completion of $S_n$ \cite{LS96}.  In assessing invariants of ASM varieties, this path has proved difficult. In the present work, we lean instead on the combinatorics of weak (Bruhat) order, a tack also taken in \cite{PSW24} in their study of matrix Schubert varieties.  Hamaker and Reiner \cite{HR20} proposed an extension of weak order from $S_n$ to $\asm(n)$, the set of $n \times n$ ASMs, arguing for its naturality for combinatorial reasons.  Terwilliger \cite{Terwilliger18} introduced a poset whose maximal chains are in bijection with ASMs.  Hamaker and Reiner showed that linear extensions of the weak order on $\asm(n)$ give shelling orders on Terwilliger's poset.

In this paper, we present an algebro-geometric case that agrees with \cite{HR20}'s conclusion. Using their notion of ASM weak order, we show the following:

\begin{theorem*}[{\cref{cor:codimension-saturated-chains}}]
    The codimension of the ASM variety $X_A$ is equal to the minimum length of a saturated chain from the identity permutation to $A$ in weak order.
\end{theorem*}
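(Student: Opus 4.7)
The plan is to prove both inequalities $\codim(X_A) \leq \mathrm{mc}(A)$ and $\mathrm{mc}(A) \leq \codim(X_A)$, where $\mathrm{mc}(A)$ denotes the minimum length of a saturated chain from the identity permutation to $A$ in the Hamaker--Reiner weak order on $\asm(n)$.

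For the first inequality, I would analyze each covering relation $A \lessdot A'$ in ASM weak order. The Hamaker--Reiner covers are given by explicit local modifications of ASMs, and for each type of cover one should be able to compare the rank matrices of $A$ and $A'$ directly to conclude $\codim(X_{A'}) \leq \codim(X_A) + 1$. Since $X_{\mathrm{id}} = \Mat_{n \times n}$ has codimension $0$, induction on chain length then yields $\codim(X_A) \leq k$ for every saturated chain of length $k$ from $\mathrm{id}$ to $A$, whence $\codim(X_A) \leq \mathrm{mc}(A)$ upon taking the minimum over chains.

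For the reverse inequality, the plan is to construct a saturated chain of length exactly $\codim(X_A)$. Using the decomposition of $X_A$ as a union of matrix Schubert varieties, one can choose a permutation $w$ indexing an irreducible component of $X_A$ with $\ell(w) = \codim(X_A)$; on $S_n$, Hamaker--Reiner weak order restricts to the ordinary weak order, so any reduced word for $w$ already produces a saturated chain from $\mathrm{id}$ to $w$ of length $\ell(w)$. The task is then to show that $A$ can be placed on some such chain, after which truncating at $A$ yields a saturated chain from $\mathrm{id}$ to $A$ of length at most $\ell(w) = \codim(X_A)$, giving $\mathrm{mc}(A) \leq \codim(X_A)$.

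The main obstacle I anticipate is this last step. Because Hamaker--Reiner weak order is not ranked, it is not automatic that a chain from $\mathrm{id}$ to a minimum-length permutation $w$ above $A$ can be chosen to pass through $A$ without detours that would increase the chain length beyond $\ell(w)$. To handle this, I would work with the monotone triangle description of $\asm(n)$, in which the covers of ASM weak order admit an explicit combinatorial description, and match the covers appearing in a reduced decomposition of $w$ against the rank conditions distinguishing $A$ from $\mathrm{id}$. The companion result in the paper that weak order operators commute with $K$-theoretic divided difference operators provides an algebraic mirror to this bookkeeping and is the likely tool for translating between chains in the poset and sequences of divided difference operators that track codimension step by step.
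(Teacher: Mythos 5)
Your high-level plan (prove the two inequalities separately, using ``codimension changes by at most one along a weak-order cover'' for one direction and a reduced word for a minimal-length element of $\perm(A)$ for the other) is the right shape, but both halves have genuine gaps. For the inequality $\codim(X_A)\le \mathrm{mc}(A)$, the needed fact is that a weak-order cover $\pi_i(A)\prec A$ satisfies $\codim(X_A)\le\codim(X_{\pi_i(A)})+1$, and this cannot be read off ``directly from the rank matrices.'' The codimension of an ASM variety is $\min\{\ell(w):w\in\perm(A)\}$ (\cref{prop:I_A-intersection-of-schubs-in-perm}) and is not a local function of $\rk_A$ (see \cref{rmk:codim-not-bounded-as-expected}); moreover a single cover $\pi_i(A)\prec A$ can change the rank matrix in many positions of row $i$ and is typically not a strong-order cover. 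The paper gets this bound (\cref{cor:stepdowncodimbyatmost1}) from the substantive result that $\pi_i$ acts componentwise on the prime decomposition, $I_{\pi_i(A)}=\bigcap_{w\in\perm(A)}I_{\pi_i(w)}$ (\cref{prop:intersectoperator}), which in turn rests on the order-preservation of $\pi_i$ (\cref{cor:pi_iorderpreserving}), the join-compatibility \cref{prop:joindescentpart3}, and Lifting-Property arguments (\cref{lem:permpiAascent}, \cref{lem:reversePiPermSet}). Your sketch assumes this bound rather than proving it.

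The reverse direction as you describe it would fail outright for $A\notin S_n$: you propose to ``place $A$ on'' a saturated chain from the identity to a minimal-length $w\in\perm(A)$, but every weak-order cover below a permutation $v$ is of the form $vs_i$, so the principal weak-order ideal below $w$ consists entirely of permutations and never contains a non-permutation ASM. (Even if it did, truncating at $A$ strictly below $w$ would give a chain of length $<\ell(w)=\codim(X_A)$, contradicting the first inequality.) The correct mechanism, which is what the paper proves in Parts (1) and (2) of \cref{cor:codimension-saturated-chains}, is different: take a reduced word $(a_1,\ldots,a_k)$ for $w\in\perm(A)$ and apply the operators to $A$ itself, producing the chain $e=\pi_{a_1}\cdots\pi_{a_k}(A)\prec\cdots\prec\pi_{a_k}(A)\prec A$ whose intermediate elements are generally not permutations. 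Showing that each step is a strict cover uses \cref{cor:perm-set-elements-with-descent-stay-in-perm} and \cref{cor:descent-of-A-implies-descent-in-perm}, and showing that the composite reaches the identity (equivalently that $\pi(A)=e$ iff the word contains a reduced word for some element of $\perm(A)$) again uses \cref{prop:intersectoperator}. Your proposed fallback via monotone triangles and the divided-difference commutation does not supply these ingredients; in the paper the divided-difference statement (\cref{prop-DDO}) is a consequence of this circle of results, not a tool for proving it.
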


Grothendieck polynomials correspond to representatives of the K-theory classes of Schubert varieties. They are key ingredients in the known combinatorial formulas for Castelnuovo--Mumford regularity of matrix Schubert varieties \cite{RRRSDW21, RRW22, PSW24, DMS24, PY24, PS}.  Grothendieck polynomials are known to satisfy K-theoretic divided difference operators.  We define ASM Grothendieck polynomials, extending the definition from Grothendieck polynomials indexed by permutations, and show the following: 

\begin{theorem*}[{\cref{prop-DDO}}]
    ASM Grothendieck polynomials satisfy $K$-theoretic divided difference recurrences that are compatible with weak order on $\asm(n)$.
\end{theorem*}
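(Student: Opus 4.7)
The plan is to prove the recurrence by reducing to the classical K-theoretic divided difference recurrence for Grothendieck polynomials indexed by permutations, leveraging linearity of $\pi_i$ together with an explicit integer-linear expansion of $\mathfrak{G}_A$ in the basis $\{\mathfrak{G}_w : w\in S_n\}$.

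First, since $X_A$ is expressible as a union of matrix Schubert varieties and every pairwise intersection in such a decomposition is again an ASM variety (by the main result of \cite{Wei17}), iterated application of Mayer--Vietoris in (equivariant) K-theory expresses $[\mathcal{O}_{X_A}]$, and hence $\mathfrak{G}_A$, as a signed integer combination of permutation classes $\mathfrak{G}_w$. To make this step rigorous one needs to verify that the short exact sequences underlying Mayer--Vietoris (coming from the pushout description of $\mathcal{O}_{Y_1\cup Y_2}$) are exact in this generality, which in turn relies on appropriate reducedness or Cohen--Macaulay properties of the ASM varieties that arise.

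Second, I would apply $\pi_i$ term-by-term to the resulting expansion. By linearity and the known permutation recurrence
\[
\pi_i \mathfrak{G}_w \;=\; \begin{cases} \mathfrak{G}_{w s_i} & \text{if } w(i)>w(i+1), \\ \mathfrak{G}_w & \text{otherwise}, \end{cases}
\]
this yields an explicit signed integer combination of permutation Grothendieck polynomials equal to $\pi_i \mathfrak{G}_A$.

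Third, I would identify this expression with $\mathfrak{G}_{A'}$, where $A'$ is the image of $A$ under the weak-order right action of $s_i$ defined by Hamaker--Reiner: if $A$ has a weak-order descent at $i$ then $A' = A\cdot s_i$ is its weak-order cover, and otherwise $A' = A$. The identification will follow if the decomposition of $X_{A'}$ into matrix Schubert varieties is the termwise transform of the decomposition of $X_A$ under the permutation-level action $w_j \mapsto w_j s_i$ (applied precisely when $w_j(i)>w_j(i+1)$). The main obstacle is exactly this compatibility: Hamaker--Reiner weak order on $\asm(n)$ is defined via local moves on ASMs (or on their monotone triangles), and one has to translate these moves into statements about the collections of permutations appearing in the inclusion--exclusion decomposition of $X_A$ and their descent behavior at position $i$. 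A cleaner alternative would be to interpret $\pi_i$ geometrically as a $B_+$-sweep in the direction of $s_i$ and argue that the scheme-theoretic sweep of $X_A$ coincides with $X_{A'}$; however, that route introduces its own subtleties regarding reducedness and multiplicities that would need to be addressed in the K-theoretic setting.
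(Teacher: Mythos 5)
Your overall strategy---expand $\mathfrak G_A$ by inclusion--exclusion over the decomposition of $X_A$ into matrix Schubert varieties, apply $\pi_i$ term-by-term using linearity and the permutation recurrence of \cref{theorem:grothendieckandschubertrecurrence}, then re-assemble the answer as $\mathfrak G_{\pi_i(A)}$---is essentially the paper's strategy (the paper organizes it as a downward induction on strong order, expanding only one level into the joins $A_U=\bigvee_{j\in U}w_j$ of subsets of $\perm(A)$ via \cref{cor:kandtwistk}, rather than iterating all the way to an integer combination of permutation Grothendieck polynomials). Your first worry, about exactness of the Mayer--Vietoris sequences, is not where the difficulty lies: the algebraic sequence $0\to R/(I\cap J)\to R/I\oplus R/J\to R/(I+J)\to 0$ is exact for arbitrary ideals, and what one needs is only that sums of ASM ideals are again (radical) ASM ideals, so that the term $R/(I+J)$ is itself governed by an ASM Grothendieck polynomial; this is known from Frobenius splitting/Gr\"obner degeneration (\cref{prop:antiDiagInit}, \cref{cor:kandtwistk}, and \cref{prop:idealsum}), and no Cohen--Macaulayness is required (ASM varieties need not be Cohen--Macaulay).

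The genuine gap is precisely the step you defer as ``the main obstacle.'' Identifying the transformed expansion with $\mathfrak G_{\pi_i(A)}$ requires two nontrivial compatibility statements, neither of which you prove: (i) $\pi_i$ commutes with joins, i.e.\ $I_{\pi_i(B_1\vee\cdots\vee B_k)}=I_{\pi_i(B_1)}+\cdots+I_{\pi_i(B_k)}$, so that every non-singleton term of the inclusion--exclusion transforms correctly (\cref{prop:joindescentpart3}); and (ii) the components transform correctly, i.e.\ $I_{\pi_i(A)}=\bigcap_{w\in\perm(A)}I_{\pi_i(w)}$, equivalently $\perm(\pi_i(A))$ consists of the minimal elements of $\{\pi_i(w):w\in\perm(A)\}$ (\cref{prop:intersectoperator}). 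These rest on the fact that the weak order operators preserve strong order (\cref{cor:pi_iorderpreserving}), which in the paper is deduced from a careful analysis of descents via essential cells and corner sum functions (\cref{lemma:descentessential}, \cref{lemma:joindescent}); this translation from the Hamaker--Reiner definition of $\pi_i$ to statements about the permutations in the decomposition is the real content of the theorem, not a routine verification. Note also a warning sign that the componentwise picture is delicate: $\pi_i$ does \emph{not} commute with meets (see \cref{meetCounterexample}), and some $\pi_i(w_j)$ may become redundant in the intersection, so the naive ``termwise transform of the decomposition'' must be phrased with minimal elements, as in \cref{prop:intersectoperator}. Your alternative geometric route (interpreting $\pi_i$ as a Borel sweep) is not pursued in the paper and would indeed require separate justification. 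As written, the proposal reduces the theorem to exactly the statements that constitute its proof, without establishing them.
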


We also show that weak order operators respect the components of an ASM variety (\cref{prop:intersectoperator}), that saturated chains in weak order detect the components of ASM varieties (\cref{cor:codimension-saturated-chains}), and that saturated chains in weak order reflect equidimensionality of ASM varieties within a certain set (\cref{prop:saturated-chains-same-length}).  We conjecture a similar statement on the Cohen--Macaulay property (\cref{conj:characterize-CM-by-chains}).  In \cref{prop:derivatives}, we show that ASM Grothendieck polynomials satisfy the same derivative formula that gave rise to the first combinatorial formula for Castelnuovo--Mumford regularity for arbitrary matrix Schubert varieties \cite{PSW24}.  

Finally, in \cref{s:antichains}, we generalize results from the prior sections of the paper on $\asm(n)$ to antichains in $S_n$, which are the indexing objects of arbitrary unions of matrix Schubert varieties.  These proofs use as base cases the results on ASMs themselves. This shows that understanding ASMs not only gives a direct understanding of arbitrary intersections of matrix Schubert varieties but also provides scaffolding to understand their arbitrary unions.

\section{Preliminaries: Definitions and relations among algebra, geometry, and combinatorics}\label{subsect:preliminaries-definitions}

In this section, we will define the set of $n \times n$ alternating sign matrices $\asm(n)$.  By identifying permutations with their permutation matrices, we may view the symmetric group $S_n$ as a subset of $\asm(n)$.  We will define both strong (Bruhat) order and weak (Bruhat) order on $S_n$ and on $\asm(n)$ from both an algebro-geometric perspective and from a combinatorial perspective. Our focus throughout will be the relationships between these two perspectives. 

Given $m\le n\in \mathbb Z$, let $[n]=\{1,\ldots,n\}$ and let $[m,n] = \{m, m+1, \ldots, n-1,n\}$.  Fix throughout this paper an arbitrary field $\kappa$.

\subsection{Alternating sign matrices}

An $n \times n$ \newword{alternating sign matrix (ASM)} $A = (A_{i,j})$ is an $n \times n$ matrix with entries in $\{-1,0,1\}$ so that, \begin{enumerate}
    \item for all $i,m \in [n]$, $\sum_{j=1}^m A_{i,j} \in \{0,1\}$,
    \item for all $j, m \in [n]$, $\sum_{i=1}^m A_{i,j} \in \{0,1\}$, and 
    \item $\sum_{(i,j) \in [n] \times [n]} A_{i,j} = n$.
\end{enumerate} These three conditions are equivalent to the conditions that the nonzero entries in each row and column alternate in sign and sum to one.  We write $\asm(n)$ for the set of $n\times n$ alternating sign matrices.

The symmetric group $S_n$ is the group of bijections from $[n]$ to $[n]$ under composition.  We may also view $S_n$ as the subset of $\asm(n)$ consisting of the \newword{permutation matrices}, i.e., the subset of $\asm(n)$ with entries in $\{0,1\}$. Specifically, if $w\in S_n$, we will associate to $w$ the permutation matrix $M_w$ with $1$'s in positions $(i,w(i))$ for all $i\in [n]$ and $0$'s elsewhere.  We caution the reader that for $v, w\in S_n$, we have $M_{vw} = M_wM_v$.  Identifying an automorphism of $[n]$ with its permutation matrix, we will refer to either as an element of $S_n$.  Given $w\in S_n$ we write $\ell(w)=|\{(i,j):i<j \text{ and } w(i)>w(j)\}|$ for the \newword{Coxeter length} of $w$.

\subsection{Alternating sign matrix varieties and strong order}

To $A \in\asm(n)$, we associate its \newword{corner sum function} $\rk_A$ defined by $\rk_A(i,j)=\sum_{a\in[i],b\in[j]} A_{a,b}$ for all $i,j\in[n]$.  We will often conflate $\rk_A$ with the matrix $(\rk_A(i,j))_{i,j\in[n]}$.  It will also be convenient to  define $\rk_A(i,j)=0$ if $i=0$ or $j=0$.  We will use $\rk_A$ to denote the corner sum function of $A$, whether we are viewing its domain as $[0,n]^2$ or $[n]^2$ and trust that the appropriate domain will be clear from context.

To each $A \in \asm(n)$, there is an associated subvariety of affine $n^2$-space constructed using $\rk_A$.  Let $\Mat(n)$ denote the set of $n\times n$ matrices with coefficients in the field $\field$.  Given $M\in \Mat(n)$ we write $M_{[i],[j]}$ for the restriction of $M$ to the first $i$ rows and $j$ columns.
The \newword{ASM variety} of $A\in\asm(n)$ is 
\[
X_A=\{M\in \Mat(n) : \rk(M_{[i],[j]})\le \rk_A(i,j) \text{ for all } i,j\in[n]\}.
\]
We let $z_{i,j}$ with $i,j\in[n]$ denote the coordinates of $\Mat(n)$, $S=\field [z_{i,j} : i,j\in[n]]$, and $Z=(z_{i,j})_{i,j\in[n]}$.
Let $I_k(Z_{[i],[j]})\subset S$ be the ideal generated by the $k$-minors of $Z_{[i],[j]}$.  We define
\[
I_A = \sum_{i,j\in[n]}I_{\rk_A(i,j)+1}(Z_{[i],[j]}), 
\] which we call the \newword{ASM ideal} of $A$.  Then $X_A = \mathbb V(I_A)$.  By \cite[Section 7.2]{Knu09} and \cite[Lemma 5.9]{Wei17}, or \cite[Lemma 2.6]{KW23}, $I_A$ is radical.

When $A \in S_n$, $X_A$ is called a \newword{matrix Schubert variety}, and $I_A$ is called a \newword{Schubert determinantal ideal}.  Matrix Schubert varieties were introduced by Fulton \cite{Ful92}, who proved, for $A \in S_n$, that $X_A$ is irreducible and Cohen--Macaulay, that $\codim(X_A) = \ell(A)$, and that $I_A$ is a prime ideal defining $X_A$ \cite[Theorem 3.3]{Ful92}.  For further information on matrix Schubert varieties and their connection to Schubert varieties, we refer the reader to \cite{Ful92, KM05}.  

We define \newword{strong (Bruhat) order} on $\asm(n)$ by the relation $A \leq B$ if and only if $X_A\supseteq X_B$. We may also use corner sum functions to define strong order on $\asm(n)$.  We record these equivalent definitions below.

\begin{proposition}
\label{proposition:asmorderfacts}
    Let $A,B\in \asm(n)$.  The following are equivalent:
    \begin{enumerate}
        \item $X_A\supseteq X_B$.
        \item $I_A\subseteq I_B$.
        \item $\rk_A(i,j)\geq \rk_B(i,j)$ for all $i,j\in [n]$.
    \end{enumerate}
When these equivalent conditions are satisfied, we say $A\leq B$ in strong order.
\end{proposition}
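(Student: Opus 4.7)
The plan is to prove the cyclic implications $(3) \Rightarrow (2) \Rightarrow (1) \Rightarrow (3)$. The first two steps are essentially formal; the nontrivial direction is $(1) \Rightarrow (3)$, which requires producing a well-chosen witness matrix in $X_B$.

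For $(3) \Rightarrow (2)$: Laplace expansion of a $k$-minor along any row realizes it as a $\field$-linear combination of matrix entries and $(k-1)$-minors, so $I_k(Z_{[i],[j]}) \subseteq I_{k-1}(Z_{[i],[j]})$. Iterating, if $\rk_A(i,j) \geq \rk_B(i,j)$ for all $(i,j)$, then $I_{\rk_A(i,j)+1}(Z_{[i],[j]}) \subseteq I_{\rk_B(i,j)+1}(Z_{[i],[j]})$; summing over $(i,j)$ gives $I_A \subseteq I_B$. For $(2) \Rightarrow (1)$: the vanishing locus reverses inclusions, so $X_A = \mathbb{V}(I_A) \supseteq \mathbb{V}(I_B) = X_B$.

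For $(1) \Rightarrow (3)$, I argue contrapositively: assuming $\rk_A(i_0, j_0) < \rk_B(i_0, j_0)$ for some $(i_0, j_0)$, I will produce $M \in X_B \setminus X_A$. It suffices to find $M \in X_B$ with $\rk(M_{[i_0],[j_0]}) = \rk_B(i_0, j_0)$, since then $\rk(M_{[i_0],[j_0]}) > \rk_A(i_0, j_0)$ forces $M \notin X_A$. A natural class of candidates is permutation matrices: for any $w \in S_n$, one has $\rk((M_w)_{[i],[j]}) = \rk_w(i,j)$ for every $(i,j)$, so $M_w \in X_B$ precisely when $\rk_w \leq \rk_B$ pointwise, and the rank at $(i_0, j_0)$ is exactly $\rk_w(i_0, j_0)$. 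Thus the task reduces to finding $w \in S_n$ with $\rk_w \leq \rk_B$ and $\rk_w(i_0, j_0) = \rk_B(i_0, j_0)$.

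The main obstacle is establishing this existence, equivalently the pointwise identity
\[
\rk_B(i_0, j_0) \;=\; \max\bigl\{\rk_w(i_0, j_0) \,:\, w \in S_n,\ \rk_w \leq \rk_B\bigr\}.
\]
This realization of $\rk_B$ by permutation rank functions below $B$ is a manifestation of $\asm(n)$'s being the MacNeille completion of $(S_n, \leq)$ in strong Bruhat order, due to Lascoux--Sch\"utzenberger \cite{LS96}. Alternatively, given the corner $(i_0, j_0)$, one can construct such a $w$ directly from $B$ by a greedy procedure: along the path from $(i_0, j_0)$ to the boundary of the grid one reads off $n$ positions for the $1$'s of a permutation matrix that matches $\rk_B$ at $(i_0, j_0)$ and remains dominated by $\rk_B$ elsewhere, using the defect-one property that $\rk_B$ changes by at most one between adjacent cells.
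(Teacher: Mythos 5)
The paper itself records \cref{proposition:asmorderfacts} without proof, treating the equivalences as definitional bookkeeping (they are standard, cf.\ \cite{Wei17}), so there is no in-paper argument to match your proposal against; judged on its own, your proof is correct. The directions $(3)\Rightarrow(2)$ (Laplace expansion puts $k$-minors inside the ideal of $(k-1)$-minors) and $(2)\Rightarrow(1)$ (taking $\mathbb V$ reverses inclusions) are exactly the formal steps one would expect, and your handling of $(1)\Rightarrow(3)$ is the right move: since a matrix of $X_B$ generally cannot attain \emph{all} of the rank bounds $\rk_B(i,j)$ simultaneously (ASM varieties are typically reducible), one must witness the strict inequality at the single offending cell, and permutation matrices $M_w$ with $\rk_w\leq\rk_B$ pointwise are precisely the right witnesses because $\rk((M_w)_{[i],[j]})=\rk_w(i,j)$. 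Two remarks on the key existence statement. First, the displayed identity $\rk_B(i_0,j_0)=\max\{\rk_w(i_0,j_0):\rk_w\le\rk_B\}$ does not follow from the MacNeille property alone, which only gives the lattice statement $B=\bigwedge\{w\in S_n: w\ge B\}$; you also need that meets in $\asm(n)$ are computed by pointwise maxima of corner sum functions, which is \cref{lemma:joinmeet} (resting on the Robbins--Rumsey characterization, \cref{lemma:cornerincrease}). With that supplement the argument is complete, and it is in the same spirit as the machinery the paper does develop: the identity is equivalent to the fact, used throughout the paper via \cref{prop:I_A-intersection-of-schubs-in-perm}, that $X_B$ is the union of the $X_w$ for $w\in\perm(B)$. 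Second, your alternative ``greedy'' construction of a suitable $w$ from the defect-one property is only a sketch as written (it is not clear from the description how domination by $\rk_B$ is maintained off the chosen path), so the Lascoux--Sch\"utzenberger route should be regarded as the actual justification; since the paper already cites \cite{LS96} for exactly this completion statement, that reliance is unobjectionable.
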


If $A, B \in S_n$, the strong order relation $A \leq B$ may also be described purely combinatorially. In order to state these equivalent characterizations, we recall some basic facts about the combinatorics of the symmetric group.

The symmetric group $S_n$ is generated by the \newword{simple reflections} $\{s_1,\ldots, s_{n-1}\}$, where $s_i\in S_n$ exchanges $i$ and $i+1$ and leaves all other inputs fixed.  The Coxeter length of $w\in S_n$ is equal to the least $k\in \mathbb Z_{\geq 0}$ so that $w$ may be written as a product of $k$ simple reflections. A \newword{reduced word} for $w\in S_n$ is a sequence $({i_1},\ldots, i_{\ell(w)})$ such that  $w=s_{i_1}\cdots s_{i_{\ell(w)}}$.  Then $s_{i_1}\cdots s_{i_{\ell(w)}}$ is called a \newword{reduced expression} for $w$.

Given a permutation, we can represent it in either one-line notation or matrix form. The \newword{one-line notation} of $w\in S_n$ is the sequence $w(1)w(2)\cdots w(n)$, where, as is common, we omit parentheses and commas.

We say $u\leq w$ in \newword{strong (Bruhat) order} if some (equivalently, every) reduced word for $w$ contains a reduced word for $u$ as a (not necessarily consecutive) substring; i.e., if $(i_1,\ldots,i_{\ell(w)})$ is a reduced word for $w$, then there is $1\leq j_1<j_2<\cdots<j_{\ell(u)}\leq \ell(w)$ so that $(i_{j_1},\ldots,i_{j_\ell(u)})$ is a reduced word for $u$. The covering relations for strong order are of the form $u\leq w$ if $w = ut_{i,j}$ for some transposition $t_{i,j}=(i\, j)$ and $\ell(u)+1=\ell(w)$.  

From this combinatorial perspective, we could also characterize $\asm(n)$ under strong order as arising as from (specifically, as the MacNeille completion of) $S_n$ under strong order \cite{LS96}. 

For the next propositions we need the concepts of meet and join. Given $A_1, \ldots, A_k \in \asm(n)$, the \newword{join} of $\{A_1, \ldots, A_k\}$, denoted $A_1\vee \cdots \vee A_k$, or $\bigvee_{i \in [k]} A_i$, is the least upper bound of $\{A_1, \ldots, A_k\}$, i.e., 
\begin{equation*}
    A_1\vee \cdots \vee A_k=\min\{B\in \asm(n): B\ge A_i \mbox{ for all } i \in[k]\}.
\end{equation*}
The \newword{meet} of $\{A_1, \ldots, A_k\}$, denoted by $A_1 \wedge \cdots \wedge A_k$, or $\bigwedge_{i\in[k]} A_i$, is the greatest lower bound of $\{A_1, \ldots, A_k\}$, i.e., 
\begin{equation*}
   A_1 \wedge \cdots \wedge A_k=\max\{B\in \asm(n): B\le A_i \mbox{ for all } i \in[k] \}.
\end{equation*}
A \newword{lattice} is a poset for which every pair of elements has a well-defined join and meet.  A lattice is \newword{complete} if every subset of elements has a well-defined join and meet.  Necessarily, finite lattices are complete.  

\begin{proposition}[{\cite[Proposition 5.4]{Wei17}}]
\label{prop:idealsum}
    If $A,B_1, \ldots, B_k \in \asm(n)$ and $A=B_1\vee \cdots \vee B_k$, then
    \begin{enumerate}
        \item $I_A=I_{B_1}+\cdots+I_{B_k}$.
        \item $X_A=X_{B_1}\cap \cdots \cap X_{B_k}$.
    \end{enumerate}
\end{proposition}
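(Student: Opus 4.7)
The plan is to reduce both statements to a single combinatorial fact: if $A = B_1 \vee \cdots \vee B_k$, then the corner sum function of $A$ is the pointwise minimum of the corner sum functions of the $B_l$, i.e., $\rk_A(i,j) = \min_{l \in [k]} \rk_{B_l}(i,j)$ for all $i, j \in [n]$. Once this identity is in hand, both parts follow by formal manipulations with the generators of the ideals $I_A$ and $I_{B_l}$.

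To establish the identity, I would use \cref{proposition:asmorderfacts}. The inequality $\rk_A(i,j) \leq \min_l \rk_{B_l}(i,j)$ is immediate from $A \geq B_l$ for each $l$. For the reverse inequality, set $r(i,j) := \min_l \rk_{B_l}(i,j)$ and verify that $r$ is itself an ASM corner sum function: the boundary conditions $r(i,0) = r(0,j) = 0$, $r(n, j) = j$, $r(i, n) = i$ are inherited from each $\rk_{B_l}$, and the monotonicity conditions $r(i,j) - r(i-1, j) \in \{0,1\}$ and $r(i, j) - r(i, j-1) \in \{0, 1\}$ follow by choosing an index $l^\ast$ that realizes the minimum at $(i-1, j)$ (respectively at $(i, j-1)$) and using $r(i,j) \leq \rk_{B_{l^\ast}}(i, j) \leq \rk_{B_{l^\ast}}(i-1, j) + 1 = r(i-1, j) + 1$. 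Hence $r = \rk_{A'}$ for some $A' \in \asm(n)$ with $A' \geq B_l$ for all $l$, so $A' \geq A$ by definition of join, giving $r = \rk_{A'} \leq \rk_A$.

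Given the identity, part (2) is immediate: $M \in \bigcap_l X_{B_l}$ iff $\rk(M_{[i],[j]}) \leq \rk_{B_l}(i,j)$ for every $l, i, j$, iff $\rk(M_{[i],[j]}) \leq \min_l \rk_{B_l}(i,j) = \rk_A(i, j)$ for every $i, j$, iff $M \in X_A$. For part (1), the inclusion $\sum_l I_{B_l} \subseteq I_A$ uses $\rk_A \leq \rk_{B_l}$ together with the cofactor expansion showing $I_{k+1}(Z_{[i],[j]}) \subseteq I_k(Z_{[i],[j]})$: each generator of $I_{B_l}$ at position $(i,j)$ is a $(\rk_{B_l}(i,j)+1)$-minor, which lies in $I_{\rk_A(i, j)+1}(Z_{[i],[j]}) \subseteq I_A$. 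For the reverse inclusion, at each $(i,j)$ the minimum is achieved by some $l^\ast = l^\ast(i, j)$, so every generator of $I_A$ at position $(i,j)$ is literally a generator of $I_{B_{l^\ast}}$, and hence lies in $\sum_l I_{B_l}$.

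The main obstacle is the opening step: verifying that the pointwise minimum of ASM corner sum functions is again an ASM corner sum function. This is the combinatorial substance behind $\asm(n)$ being a lattice under strong order and is the only place where the ASM hypothesis on the $B_l$ enters in a nontrivial way. The remainder of the argument is a direct translation between corner sums, rank conditions, and the ideals they cut out.
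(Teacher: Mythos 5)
Your proposal is correct. Note that the paper does not prove this proposition itself; it imports it from \cite[Proposition 5.4]{Wei17}, and the combinatorial crux of your argument --- that the pointwise minimum of the $\rk_{B_l}$ is again an ASM corner sum function, hence equals $\rk_A$ --- is exactly the content of \cref{lemma:joinmeet}, which the paper proves in the same way via \cref{lemma:cornerincrease}; your translation into rank conditions and generators for parts (1) and (2) matches the route taken in the cited reference. One small point of completeness: for the increment condition you only verify $r(i,j)\leq r(i-1,j)+1$; you should also record the symmetric one-liner $r(i,j)\geq r(i-1,j)$ (choose $l'$ realizing the minimum at $(i,j)$ and use $\rk_{B_{l'}}(i,j)\geq \rk_{B_{l'}}(i-1,j)\geq r(i-1,j)$), but this is cosmetic rather than a gap.
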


Although \Cref{prop:idealsum} shows intersections of arbitrary ASM varieties are again ASM varieties, arbitrary unions of ASM varieties need not be.  For instance, $X_{213}\cup X_{132}$ is not an ASM variety. However, there is an analogous statement of \Cref{prop:idealsum} for unions of ASM varieties that do happen to be another ASM variety.

\begin{proposition}
\label{prop:intersection-of-asm-ideals}
    If $A,B_1, \ldots, B_k \in \asm(n)$, $A=B_1\wedge \cdots \wedge B_k$, and $X_{B_1}\cup \cdots \cup X_{B_k}$ is an ASM variety, then
    \begin{enumerate}
        \item $I_A=I_{B_1}\cap \cdots \cap I_{B_k}$.
        \item $X_A=X_{B_1}\cup \cdots \cup X_{B_k}$.
    \end{enumerate}
\end{proposition}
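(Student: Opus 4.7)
My plan is to establish part (2) first by a purely lattice-theoretic argument using \cref{proposition:asmorderfacts}, then deduce part (1) using radicality of the relevant ideals.

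For (2), the hypothesis provides some $C \in \asm(n)$ with $X_C = X_{B_1} \cup \cdots \cup X_{B_k}$, and the task is to show $A = C$. On the one hand, since $X_{B_i} \subseteq X_C$ for each $i$, \cref{proposition:asmorderfacts} yields $C \leq B_i$, so $C$ is a common lower bound of $\{B_1, \ldots, B_k\}$, forcing $C \leq \bigwedge_i B_i = A$. On the other hand, $A \leq B_i$ for each $i$ gives $X_A \supseteq X_{B_i}$, hence $X_A \supseteq \bigcup_i X_{B_i} = X_C$, which by \cref{proposition:asmorderfacts} translates back to $A \leq C$. Antisymmetry yields $A = C$, and (2) follows.

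For (1), the inclusion $I_A \subseteq \bigcap_i I_{B_i}$ is immediate: each $A \leq B_i$ gives $I_A \subseteq I_{B_i}$ via \cref{proposition:asmorderfacts}, and we intersect. For the reverse inclusion, I would invoke radicality. Each $I_{B_i}$ is radical by the cited result of Knutson and Weigandt, and the intersection of radical ideals is radical, so $\bigcap_i I_{B_i}$ is radical; $I_A$ is radical for the same reason. By part (2), the two radical ideals cut out the same closed subset of $\mathrm{Spec}(S)$, namely $X_A$, using the standard identity $\mathbb{V}(\bigcap_i I_{B_i}) = \bigcup_i \mathbb{V}(I_{B_i})$. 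Since radical ideals in $S$ correspond bijectively to reduced closed subschemes of $\mathrm{Spec}(S)$, we conclude $I_A = \bigcap_i I_{B_i}$.

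I expect the main obstacle to be almost entirely bookkeeping: \cref{proposition:asmorderfacts} does the work in (2), and the only delicate point in (1) is that the ideal-to-subscheme correspondence we use is valid over the arbitrary field $\kappa$. This is handled cleanly by working in $\mathrm{Spec}(S)$, where the bijection between radical ideals and reduced closed subschemes holds regardless of the base field.
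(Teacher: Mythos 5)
Your part (2) is correct and is exactly what the authors have in mind: the paper offers no argument beyond the remark that the proposition ``is immediate from the definitions of meet, union, and strong order,'' and your lattice argument via \cref{proposition:asmorderfacts} is that argument spelled out. The problem is in part (1), at the step ``by part (2), the two radical ideals cut out the same closed subset of $\mathrm{Spec}(S)$.'' Part (2) — like the hypothesis itself, since the paper defines $X_A$ as a subset of $\Mat(n)$, i.e., a set of $\kappa$-points — is a statement about $\kappa$-points, and over a field that is not algebraically closed two distinct radical ideals can have identical $\kappa$-rational zero loci: $(x^2+y^2)$ and $(x,y)$ in $\mathbb{R}[x,y]$ are both radical with the same set of real zeros, and over $\mathbb{F}_q$ the radical ideal $(z^q-z)$ has the same $\mathbb{F}_q$-points as $(0)$. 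Moving to $\mathrm{Spec}(S)$ does not repair this, because nothing you have established shows that every prime containing $I_A$ contains some $I_{B_i}$ — and that assertion is \emph{equivalent} to the containment $\bigcap_i I_{B_i}\subseteq I_A$ you are trying to prove. So, reading the hypothesis literally as a point-set statement over the paper's arbitrary field $\kappa$, the appeal to the radical-ideal/reduced-subscheme correspondence is a genuine gap; radicality alone cannot close it.

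The repair, which is surely the intended reading, is to take the hypothesis at the ideal (equivalently, reduced-scheme) level: ``$X_{B_1}\cup\cdots\cup X_{B_k}$ is an ASM variety'' means $\bigcap_i I_{B_i}=I_C$ for some $C\in\asm(n)$. Then no Nullstellensatz-style input is needed anywhere: $I_C\subseteq I_{B_i}$ gives $C\le B_i$ by \cref{proposition:asmorderfacts}, hence $C\le\bigwedge_i B_i=A$, i.e., $I_C\subseteq I_A$; while $A\le B_i$ for all $i$ gives $I_A\subseteq\bigcap_i I_{B_i}=I_C$. Thus $I_A=\bigcap_i I_{B_i}$, and applying $\mathbb{V}$ yields (2) — which is exactly the ``immediate from the definitions'' proof. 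If you insist on the point-set hypothesis, you must either assume $\kappa$ algebraically closed and invoke the Nullstellensatz (noting $I_A$ and $\bigcap_i I_{B_i}$ are radical, which you did), or argue separately that for these particular ideals equality of $\kappa$-points forces equality of ideals; your current write-up does neither.
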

\noindent \cref{prop:intersection-of-asm-ideals} is immediate from the definitions of meet, union, and strong order.

For $A \in \asm(n)$, let \[
\perm(A)= \{w\in S_n: w\ge A, \text{ and, if } w\ge v\ge A \text{ for some } v\in S_n,\text{ then }w=v\},
\] which we call the \newword{permutation set of $A$}.

\begin{proposition}\cite[Proposition 5.4]{Wei17}\label{prop:I_A-intersection-of-schubs-in-perm}
Let $A \in \asm(n)$.  Then $I_A = \bigcap_{w \in \perm(A)} I_w$ is the minimal prime decomposition of the radical ideal $I_A$. Consequently, $\codim(X_A) = \min\{\ell(w) : w \in \perm(A)\}$.
\end{proposition}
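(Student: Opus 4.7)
The plan is to prove the set-theoretic decomposition $X_A = \bigcup_{w \in \perm(A)} X_w$ as an irredundant union of irreducible components, and then to extract both the ideal decomposition and the codimension formula from standard facts together with Fulton's result that $X_w$ is irreducible of codimension $\ell(w)$ for $w \in S_n$.

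For the easy containment $\bigcup_{w \in \perm(A)} X_w \subseteq X_A$, note that every $w \in \perm(A)$ satisfies $w \geq A$ in strong order, so \cref{proposition:asmorderfacts} gives $X_w \subseteq X_A$. For the reverse containment, given $M \in X_A$, I would produce a permutation $w \geq A$ with $M \in X_w$. The idea is to pass to the rank table $r(i,j) := \rk(M_{[i],[j]})$; the hypothesis $M \in X_A$ means $r(i,j) \leq \rk_A(i,j)$, and rank tables of $n \times n$ matrices satisfy the standard monotonicity and unit-increment inequalities that also characterize ASM corner sum functions. Any such table can be dominated by a \emph{permutation} corner sum function lying weakly below $\rk_A$; equivalently, every matrix $M \in \Mat(n)$ lies in some smallest matrix Schubert variety $X_{w_M}$, and the containment in $X_A$ forces $w_M \geq A$.

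Having set-theoretically written $X_A = \bigcup_{w \in S_n,\, w \geq A} X_w$, I would prune this union to $\perm(A)$: if $w < w'$ in strong order on $S_n$ with both $\geq A$, then \cref{proposition:asmorderfacts} gives $X_{w'} \subsetneq X_w$, so $X_{w'}$ is redundant. Trimming away redundancies leaves precisely the minimal permutations above $A$, i.e., $\perm(A)$. These $X_w$ are pairwise incomparable, each is irreducible by \cite{Ful92}, and their union is $X_A$; hence $\{X_w\}_{w \in \perm(A)}$ are exactly the irreducible components of $X_A$, and the $I_w$ are the minimal primes of $I_A$. Because $I_A$ is radical by \cite[Section 7.2]{Knu09} and \cite[Lemma 5.9]{Wei17}, we conclude $I_A = \bigcap_{w \in \perm(A)} I_w$ is the minimal primary decomposition. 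The codimension statement follows since codimension of a union of irreducible varieties is the minimum over components, and $\codim(X_w) = \ell(w)$ by \cite{Ful92}.

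The main obstacle is the reverse containment in the first step: extracting a permutation from the rank table of an arbitrary $M \in X_A$ and ensuring that the extracted permutation still dominates $A$ in strong order. A clean proof requires the combinatorial characterization of permutation corner sum functions as the "maximal elements" among monotone rank tables dominated by $\rk_A$, together with a constructive procedure (a kind of "rounding" on rank tables, or equivalently a choice of generic point in a Bruhat stratum) showing that such a permutation exists below $\rk_A$ whenever the starting table is.
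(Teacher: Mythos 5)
The paper itself gives no argument for this proposition---it is quoted directly from \cite[Proposition 5.4]{Wei17}---so the only question is whether your reconstruction stands on its own, and at present it does not. The outer layers are fine: $\bigcup_{w\in\perm(A)}X_w\subseteq X_A$ via \cref{proposition:asmorderfacts}, pruning a union indexed by $\{w\in S_n: w\geq A\}$ down to the minimal elements $\perm(A)$, irreducibility, primeness of $I_w$, and $\codim(X_w)=\ell(w)$ from \cite{Ful92}, and radicality of $I_A$ upgrading $\sqrt{I_A}=\bigcap_{w\in\perm(A)}I_w$ to the stated equality of ideals. But the heart of the statement is the reverse containment $X_A\subseteq\bigcup_{w\geq A}X_w$: that every $M$ with $\rk(M_{[i],[j]})\leq\rk_A(i,j)$ for all $i,j$ lies in $X_w$ for some \emph{permutation} $w\geq A$. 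You assert this via ``every matrix lies in a smallest matrix Schubert variety $X_{w_M}$'' and then explicitly defer its proof (``the main obstacle''), gesturing at a rounding procedure without supplying it. That deferred claim is essentially as strong as the proposition being proved, so as written the proposal is circular at its crux; this is a genuine gap, not a stylistic omission.

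The gap is closable along the lines you indicate, and it is worth recording how. Set $r(i,j)=\rk(M_{[i],[j]})$ and $\rho(i,j)=\max\bigl(r(i,j),\,i+j-n,\,0\bigr)$ on $[0,n]^2$. One checks that $\rho$ satisfies the three conditions of \cref{lemma:cornerincrease} (unit increments because both $r$ and $\max(0,i+j-n)$ have them; boundary values because $r(i,n)\leq i$ and $r(n,j)\leq j$), so $\rho=\rk_B$ for some $B\in\asm(n)$. A short case analysis, using that the second differences $r(i,j)-r(i-1,j)-r(i,j-1)+r(i-1,j-1)$ of a matrix rank table are nonnegative (they are the entries of the partial permutation matrix with the same rank table) together with the unit-increment inequalities, shows the second differences of $\rho$ are also nonnegative; hence $B$ has no $-1$ entries and is a permutation $w_M$. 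Then $r\leq\rk_{w_M}$ gives $M\in X_{w_M}$, and since every ASM corner sum function dominates $\max(0,i+j-n)$ (again by \cref{lemma:cornerincrease}), the hypothesis $r\leq\rk_A$ forces $\rk_{w_M}\leq\rk_A$, i.e., $w_M\geq A$; minimality of $\rho$ among dominating corner sum functions also yields the uniqueness you asserted. With this lemma in place your argument goes through; without it, the essential content of \cite[Proposition 5.4]{Wei17} remains unproven.
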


\subsection{Weak (Bruhat) order}\label{subsect:weak-Bruhat-order-definitions}

We now recall facts about weak order on $\asm(n)$, which was introduced by Hamaker and Reiner in \cite{HR20}.  Hamaker and Reiner gave their definition in terms of objects called monotone triangles, which are combinatorially equivalent to ASMs. (In \cite{EKW}, the authors will develop further connections between the presentation in the current paper and monotone triangles.)  For our purposes, we will find it more convenient to work in terms of ASMs and corner sum functions.

In order to define weak order, we introduce a sublattice of $\asm(n)$.

\begin{definition}
\label{def:pi_i}
    Given $A\in \asm(n)$ and $i\in[n-1]$, let 
    \[\pi_i(A)=\min\{B\in \asm(n):\rk_A(a,b)=\rk_B(a,b) \text{ for all } a,b\in [n] \text{ with } a\neq i\}.\]
\end{definition}

It is not obvious from \cref{def:pi_i} that $\pi_i$ is well-defined, i.e., that a unique minimum of the given set necessarily exists. We delay the proof of this fact until \cref{subsect:preliminaries-basic-lemmas}, where it will appear as \cref{lemma:sublattice}.  More direct means for computing $\pi_i(A)$ are developed in \cite{EKW}.

When $w\in S_n$, then $\pi_i(w)=\begin{cases} w & \text{ if } w(i)<w(i+1)\\
ws_i & \text{ if } w(i)>w(i+1)\end{cases}$ (see \cite[Remark~3.3]{HR20}). 

\begin{example}\label{ex:first-pi_i-example}
    Consider the matrices $A, B \in \asm(5)$ satisfying $w = 31524 > A > B > 31254 = ws_3\in \asm(5)$, written below with $0$ entries recorded as blank squares.  Then $31254 = \pi_3(31254) = \pi_3(B) = \pi_3(A) = \pi_3(31524)$.  The reader may wish to keep this example in mind when considering the sublattice discussed in \cref{lemma:sublattice} and when considering the relationship between strong order and weak order operators in \cref{cor:pi_iorderpreserving}.  We will also return to it in \cref{ex:new-ASM-from-increasing-at-essential-cell}.
    
    \[
\raisebox{1.4cm}{$w$: }\begin{tikzpicture}[x=1.5em,y=1.5em]
\draw[step=1,gray, thin] (0,0) grid (5,5);
\draw[color=black, thick](0,0)rectangle(5,5);
\node at (0.5,3.5){$1$};
\node at (1.5,1.5){$1$};
\node at (2.5,4.5){$1$};
\node at (3.5,0.5){$1$};
\node at (4.5,2.5){$1$};
\end{tikzpicture} \hspace{3cm} 
\raisebox{1.4cm}{$A$: }\begin{tikzpicture}[x=1.5em,y=1.5em]
\draw[step=1,gray, thin] (0,0) grid (5,5);
\draw[color=black, thick](0,0)rectangle(5,5);
\node at (0.5,3.5){$1$};
\node at (1.5,1.5){$1$};
\node at (2.5,4.5){$1$};
\node at (3.5,0.5){$1$};
\node at (3.5,1.5){$-1$};
\node at (3.5,2.5){$1$};
\node at (4.5,1.5){$1$};
\end{tikzpicture}
\]  \[
\raisebox{1.4cm}{$B$: }\begin{tikzpicture}[x=1.5em,y=1.5em]
\draw[step=1,gray, thin] (0,0) grid (5,5);
\draw[color=black, thick](0,0)rectangle(5,5);
\node at (0.5,3.5){$1$};
\node at (1.5,2.5){$1$};
\node at (2.5,4.5){$1$};
\node at (2.5,1.5){$1$};
\node at (2.5,2.5){$-1$};
\node at (3.5,0.5){$1$};
\node at (3.5,1.5){$-1$};
\node at (3.5,2.5){$1$};
\node at (4.5,1.5){$1$};
\end{tikzpicture} \hspace{2.6cm} 
\raisebox{1.4cm}{$ws_3$: }\begin{tikzpicture}[x=1.5em,y=1.5em]
\draw[step=1,gray, thin] (0,0) grid (5,5);
\draw[color=black, thick](0,0)rectangle(5,5);
\node at (0.5,3.5){$1$};
\node at (1.5,2.5){$1$};
\node at (2.5,4.5){$1$};
\node at (3.5,0.5){$1$};
\node at (4.5,1.5){$1$};
\end{tikzpicture} \qedhere
\] 
\end{example}

We say that $A\in \asm(n)$ has a \newword{descent} at $i \in [n-1]$ (or that $i$ is a descent of $A$) if $\pi_i(A)\neq A$.  
    Note that in this case,  $\pi_i(A)<A$.  If $\pi_i(A) = A$ or if $i=n$, then we say that $A$ has an \newword{ascent} at $i$ (or that $i$ is an ascent of $A$). We define \newword{weak order} $\preceq$ on $\asm(n)$ to be the transitive closure of the covering relations $\pi_i(A)\prec A$ with $i$ a descent of $A$.

Note that weak order refines strong order (\cite[Remark 3.5]{HR20}), from which it follows that the transitive closure is well-defined.  The weak order operators $\pi_i$ satisfy commutation and braid relations \cite[Proposition 3.2]{HR20}.  That is $\pi_i \circ \pi_j = \pi_j \circ \pi_i$ when $|i-j|>1$ and $\pi_i \circ \pi_{i+1} \circ \pi_i = \pi_{i+1} \circ \pi_i \circ \pi_{i+1}$.  Furthermore, they are idempotent, i.e., $\pi_i^2 = \pi_i$.

As was the case with strong order, we may characterize weak order on $S_n$ purely in terms of the combinatorics of $S_n$, without reference to corner sum functions.  Given $w,v\in S_n$ we say that $u\preceq w$ if $w=uv$ for some $v\in S_n$ so that $\ell(w)=\ell(u)+\ell(v)$.  We call $\preceq$ the \newword{weak (Bruhat) order} on $S_n$.  Equivalently, $u\preceq w$ if there is some reduced word $(i_1,\ldots, i_{\ell(w)})$ for $w$ so that $(i_1,\ldots,i_{\ell(u)})$ is a reduced word for $u$.  Note that this is the transitive closure of the covering relations $u\preceq w$ if $w=us_i$ for some $i\in[n-1]$ and $\ell(w)=\ell(u)+1$. 

$S_n$ under weak order already forms a lattice \cite{Bj84}. For that reason, we cannot view weak order on $\asm(n)$ as canonically induced from weak order on $S_n$ in the same way that we saw strong order on $\asm(n)$ as induced from strong order on $S_n$.  However, Hamaker and Reiner \cite{HR20} gave a combinatorial case that the extension of weak order from $S_n$ to $\asm(n)$ that they choose, which is the one that we use here, is the natural one. For algebro-geometric reasons, we agree.  The present paper constitutes our argument for the position.

\subsection{Preliminaries: Functional Lemmas}\label{subsect:preliminaries-basic-lemmas}

The map $A\mapsto \rk_A$ is injective.  Robbins and Rumsey characterized the corner sum functions that arise from ASMs:
\begin{lemma}[{\cite[Lemma~1]{RR86}}]
\label{lemma:cornerincrease}
    Let $M:[0,n] \times [0,n] \rightarrow \mathbb{Z}$ be a function. There exists $A\in \asm(n)$ such that $M=\rk_A$ if and only if $M$ satisfies:
    \begin{enumerate}
        \item $M(i,0)=M(0,i) = 0$ for all $i \in [n]$,
        \item $M(i,n)=M(n,i)=i$ for all $i\in[n]$, and
        \item $M(i,j)-M(i-1,j),M(i,j)-M(i,j-1)\in\{0,1\}$ for all $i,j\in [n]$.
    \end{enumerate}
\end{lemma}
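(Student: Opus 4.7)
The plan is to exhibit an explicit inverse to the map $A \mapsto \rk_A$ and then show, by direct translation, that the three ASM axioms correspond exactly to the three conditions on $M$. Specifically, given any function $M:[0,n]\times[0,n]\to\mathbb{Z}$, define a candidate matrix by
\[
A_{i,j} = M(i,j) - M(i-1,j) - M(i,j-1) + M(i-1,j-1) \qquad (i,j\in[n]).
\]
A standard telescoping/inclusion-exclusion argument, using $M(0,j)=M(i,0)=0$ when those boundary values vanish, shows that $\rk_A(i,j)=M(i,j)$ for all $i,j$; and conversely, if one starts from $A\in\asm(n)$ and sets $M=\rk_A$, then recovering $A$ from $M$ by the displayed formula is immediate. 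Thus these two constructions are mutually inverse (as set-theoretic maps between $n\times n$ integer matrices and integer-valued functions on $[0,n]^2$), independently of any ASM condition.

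For the forward direction, I would assume $A\in\asm(n)$ and verify each of (1)–(3) for $M=\rk_A$. Conditions (1) is immediate since the defining sum is empty. For (2), the row-sum axiom for an ASM gives $\sum_{b=1}^n A_{a,b}=1$ for each row $a$, hence $\rk_A(i,n) = \sum_{a=1}^{i} \sum_{b=1}^{n} A_{a,b} = i$; symmetrically for $\rk_A(n,j)$. For (3), the partial row sums $\rk_A(i,j)-\rk_A(i-1,j)=\sum_{b=1}^{j}A_{i,b}$ lie in $\{0,1\}$ by the ASM row condition, and similarly for columns.

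For the reverse direction, assume $M$ satisfies (1)–(3) and define $A$ by the displayed formula. First, rewrite
\[
A_{i,j} = \bigl[M(i,j)-M(i,j-1)\bigr] - \bigl[M(i-1,j)-M(i-1,j-1)\bigr];
\]
by (3) each bracket lies in $\{0,1\}$, so $A_{i,j}\in\{-1,0,1\}$. Next, the partial row sum telescopes to $\sum_{b=1}^m A_{i,b} = M(i,m)-M(i-1,m)$, which lies in $\{0,1\}$ again by (3); the partial column sums are symmetric. Finally, the global sum telescopes to $\sum_{i,j}A_{i,j} = M(n,n) - M(0,n) - M(n,0) + M(0,0) = n$ by (1) and (2). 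Thus $A\in\asm(n)$, and by the preceding paragraph $\rk_A = M$.

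There is no real obstacle here: the content is purely the observation that forming $\rk_A$ from $A$ and forming $A$ from $M$ by mixed second differences are inverse bijections on the underlying sets of functions/matrices, and each ASM axiom has a clean telescoping translation into one of conditions (1)–(3). The only point demanding any care is the verification that $A_{i,j}\in\{-1,0,1\}$, which is not one of the three ASM axioms but follows from (3) after regrouping the mixed difference as a difference of two partial row (equivalently, column) increments.
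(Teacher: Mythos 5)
Your proof is correct. The paper itself gives no argument for this lemma---it simply cites Robbins and Rumsey \cite[Lemma~1]{RR86}---and your mixed-second-difference construction $A_{i,j}=M(i,j)-M(i-1,j)-M(i,j-1)+M(i-1,j-1)$, with the telescoping verifications in both directions, is exactly the standard proof of that cited result. The only micro-step worth making explicit is in the forward direction: with the paper's three ASM axioms, the fact that each full row (and column) sums to $1$ follows from combining the partial-sum condition at $m=n$ (each full row sum lies in $\{0,1\}$) with the total-sum condition $\sum_{i,j}A_{i,j}=n$; the paper records this equivalence immediately after the definition, so your appeal to it is fine.
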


Under strong order, $\asm(n)$ forms a complete lattice. Indeed, Lascoux and Sch\"utzenberger \cite[Lemme 5.4]{LS96} showed that $\asm(n)$ is the smallest lattice which contains the symmetric group under strong order as a subposet.
 In particular, we have the following:
\begin{lemma}
\label{lemma:joinmeet}
Fix $U\subseteq \asm(n)$.  Write $B=\bigvee_{A\in U}A$ and $C=\bigwedge_{A\in U}A$.  Then
\[\rk_B(i,j)=\min\{\rk_A(i,j):A\in U\}\]
and 
\[\rk_C(i,j)=\max\{\rk_A(i,j):A\in U\}\]
for all $i,j\in[n]$.
\end{lemma}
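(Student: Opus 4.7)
The plan is to exhibit the candidate corner sum functions
\[
M(i,j) := \min\{\rk_A(i,j) : A \in U\} \qquad \text{and} \qquad N(i,j) := \max\{\rk_A(i,j) : A \in U\},
\]
show each is the corner sum function of some ASM via \cref{lemma:cornerincrease}, and then identify the resulting ASMs with $B$ and $C$ using the universal properties of join and meet, together with the order-reversing identification $A \le B \iff \rk_A \ge \rk_B$ from \cref{proposition:asmorderfacts}.

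First I would check that $M$ satisfies conditions (1)--(3) of \cref{lemma:cornerincrease}. Conditions (1) and (2) are immediate, since every $\rk_A$ has the same boundary values on the relevant edges. For condition (3), I would pick $A'' \in U$ attaining the minimum at $(i,j)$, which gives
\[
M(i,j) = \rk_{A''}(i,j) \ge \rk_{A''}(i-1,j) \ge M(i-1,j),
\]
and pick $A' \in U$ attaining the minimum at $(i-1,j)$, which gives
\[
M(i,j) \le \rk_{A'}(i,j) \le \rk_{A'}(i-1,j) + 1 = M(i-1,j) + 1.
\]
Hence $M(i,j) - M(i-1,j) \in \{0,1\}$; the analogous argument handles column increments. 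Therefore $M = \rk_{B'}$ for a unique $B' \in \asm(n)$.

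Next I would identify $B' = B$. The inequality $\rk_{B'} = M \le \rk_A$ for every $A \in U$ gives $B' \ge A$ in strong order, so $B'$ is an upper bound of $U$. Conversely, any upper bound $B''$ of $U$ satisfies $\rk_{B''} \le \rk_A$ for all $A \in U$ and hence $\rk_{B''} \le M = \rk_{B'}$, which forces $B'' \ge B'$. Thus $B'$ is the least upper bound of $U$ in strong order, i.e., $B' = B$, yielding $\rk_B(i,j) = M(i,j)$ as claimed.

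I would handle $C = \bigwedge_{A \in U} A$ by an entirely parallel argument, working with $N$ in place of $M$ and reversing every inequality throughout. The only real obstacle is verifying increment condition (3) of \cref{lemma:cornerincrease} for $M$ and $N$, and this is precisely the short interleaving argument above; the remainder is formal manipulation of the contravariant identification between $\asm(n)$ under strong order and the set of corner sum functions characterized by \cref{lemma:cornerincrease}.
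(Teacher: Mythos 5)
Your proof is correct and takes essentially the same route as the paper: apply the Robbins--Rumsey characterization (\cref{lemma:cornerincrease}) to the pointwise min/max of the corner sum functions and then identify the resulting ASMs with the join and meet via the order-reversing correspondence of \cref{proposition:asmorderfacts}. The only differences are cosmetic: you handle arbitrary $U$ directly (the paper reduces to $|U|=2$ by induction) and you write out the increment verification that the paper leaves to the reader.
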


\begin{proof}
    By induction, we assume $|U| = 2$.  One may verify that, for $A_1,A_2\in \asm(n)$, $M=(\min\{\rk_{A_1}(i,j),\rk_{A_2}(i,j)\})_{i,j=1}^n$ and $N = (\max\{\rk_{A_1}(i,j),\rk_{A_2}(i,j)\})_{i,j=1}^n$ satisfy the three conditions of \cref{lemma:cornerincrease}, from which it follows that there exist $\widetilde{B}, \widetilde{C}\in \asm(n)$ so that $\rk_{\widetilde{B}}=M$ and $\rk_{\widetilde{C}}=N$.  
    It is clear that $\widetilde{B} \geq A_1, A_2$ and that no $B'<\widetilde{B}$ satisfies $B' \geq A_1,A_2$. Similarly, $\widetilde{C} \leq A_1, A_2$ and no $C'>\widetilde{C}$ satisfies $C' \leq A_1, A_2$.  Hence, $B = \widetilde{B}$ and $C = \widetilde{C}$.
\end{proof}

We are now prepared to give the lemma required to see that \cref{def:pi_i} is well-defined.

\begin{lemma}
\label{lemma:sublattice}
    Let $A\in \asm(n)$ and fix $i\in[n-1]$.  The set 
    \[U=\{B\in \asm(n):\rk_A(a,b)=\rk_B(a,b) \text{ for all } a,b\in [n] \text{ with } a\neq i\}\] is a sublattice of $\asm(n)$.
\end{lemma}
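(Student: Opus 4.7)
The plan is to deduce this directly from \cref{lemma:joinmeet}, which describes the join and meet in $\asm(n)$ via minimum and maximum of corner sum functions. Since $U$ is defined by prescribing the corner sum function on all rows except row $i$, and since the minimum or maximum of a constant family of integers is again that constant, the condition defining $U$ will be preserved under both operations.

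More precisely, I would take $B_1, B_2 \in U$ and consider $B_1 \vee B_2$ and $B_1 \wedge B_2$ computed in $\asm(n)$. By \cref{lemma:joinmeet}, for every $a, b \in [n]$,
\[
\rk_{B_1 \vee B_2}(a,b) = \min\{\rk_{B_1}(a,b), \rk_{B_2}(a,b)\}, \qquad \rk_{B_1 \wedge B_2}(a,b) = \max\{\rk_{B_1}(a,b), \rk_{B_2}(a,b)\}.
\]
When $a \neq i$, both $\rk_{B_1}(a,b)$ and $\rk_{B_2}(a,b)$ equal $\rk_A(a,b)$ by the definition of $U$, so both the minimum and the maximum equal $\rk_A(a,b)$. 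Hence $B_1 \vee B_2, B_1 \wedge B_2 \in U$, which is exactly the sublattice condition. After induction on the number of elements (or using that finite lattices are determined by binary joins and meets), $U$ is closed under arbitrary joins and meets, making it a sublattice of $\asm(n)$.

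There is really no substantive obstacle here: once \cref{lemma:joinmeet} is available, the argument reduces to the trivial observation that $\min$ and $\max$ of two equal numbers give back that number. The only mild care needed is noting that $U$ is nonempty (which is immediate since $A \in U$), so the join and meet are being taken over a nonempty family.
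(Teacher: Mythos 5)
Your proof is correct and follows the same route as the paper: the paper's own argument simply says that closure of $U$ under pairwise joins and meets follows immediately from the definition of $U$ and \cref{lemma:joinmeet}, which is exactly the computation you spell out. No changes are needed.
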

\begin{proof}
    It is enough to verify that joins and meets of any two elements in $U$ are also in $U$.  This follows immediately by the definition of $U$ and \cref{lemma:joinmeet}.
\end{proof}

Let $A\in\asm(n)$. We say that $(i,j)$ is an \newword{inversion} of $A$ if 
\[\sum_{k=1}^jA_{i,k}=\sum_{l=1}^i A_{l,j}=0.\]
We write $\inv(A)$ for the set of inversions of $A$.
When $A \in S_n$, then $|\inv(A)|$ is the Coxeter length (also known as the inversion number) of $A$.  Using the term \say{inversion} for ASMs in this way was first done in \cite{MRR83} and is now standard in the literature.

We introduce notation for the set of positions of $-1$'s of $A$, namely 
\begin{equation*}
    N(A)=\{(i,j): A_{i,j}=-1\}
.\end{equation*}
Notice that $N(A)\subseteq \inv(A)$.
The \newword{Rothe diagram} of $A\in\asm(n)$ is
\[
D(A)=\{(i,j) : (i,j) \text{  is an inversion of } A \text{ and } A_{i,j}=0\}.
\]
In particular, $\inv(A)=N(A)\sqcup D(A)$.

We remark that when $A \in S_n$, $|D(A)| = \ell(A) = \codim(X_A)$. However, for $A \in \asm(n) - S_n$, necessarily $D(A) \subsetneq \inv(A)$, and $\codim(X_A)$ may be less than $|D(A)|$, greater than $|\inv(A)|$, or between the two numbers (see, e.g., the examples at the top of Page 21 of \cite{HR20}, interpreted through the lens of \cref{cor:codimension-saturated-chains}).  

\begin{remark}\label{rmk:codim-not-bounded-as-expected} It is, however, quite often the case that $|D(A)| \leq \codim(X_A)$; precisely, the inequality holds for all elements of $\asm(n)$ for $n\leq 7$ and for all but $9$ of the $10,850,216$ elements of $\asm(8)$.  One of those $9$ elements is \[
A = \left(\!\begin{array}{cccccccc}
      0&0&0&0&1&0&0&0\\
      0&0&0&1&0&0&0&0\\
      0&0&1&0&0&0&0&0\\
      0&1&0&0&-1&0&0&1\\
      1&0&0&-1&0&0&1&0\\
      0&0&0&0&0&1&0&0\\
      0&0&0&0&1&0&0&0\\
      0&0&0&1&0&0&0&0
      \end{array}\!\right),
      \] which satisfies $|D(A)| = 17$ and $\codim(X_A) = 16$.
\end{remark}

The \newword{essential set} of $A$ is 
\[
\ess(A)=\{(i,j)\in \inv(A) : (i + 1, j), (i, j + 1) \notin \inv(A)\}.
\]
We call an element of $\ess(A)$ an essential cell of $A$.

We will see a poset theoretic significance of the essential set in \cref{lem:cover-by-add-to-ess-cell}.  The essential set derives its name from Fulton's result \cite[Lemma 3.10]{Ful92} in that it records the rank conditions that are essential for determining Schubert determinantal ideals, a result extended to all ASM ideals by the third author \cite[Lemma 5.9]{Wei17}.  Specifically, \[
I_A= \sum_{(i,j)\in \ess(A)}I_{\rk_A(i,j)+1}(Z_{[i],[j]}).
\] 

\begin{example}
\label{example:rothe}
Let $A=\begin{pmatrix} 
    0 & 0 & 1 & 0\\
    1 & 0  & -1 & 1 \\
    0 & 1 & 0 & 0\\
    0 & 0 & 1 & 0
    \end{pmatrix}.$  Then $D(A) = \{(1,1),(1,2)\}$, $N(A)=\{(2,3)\}$, $\inv(A) = D(A) \cup N(A)$, and $\ess(A) = \{(1,2),(2,3)\}$.  To visualize these sets, we draw a picture as below.  See \cite[Section 3.1]{Wei21} for a detailed explanation.  By a mild abuse of notation, we will tend to refer to this picture as the Rothe diagram.
    
    \[
   \begin{tikzpicture}[x=1.5em,y=1.5em]
\draw[step=1,gray, thin] (0,0) grid (4,4);
\draw[color=black, thick](0,0)rectangle(4,4);

\draw[thick, color=blue] (.5,0)--(.5,2.5)--(2.4,2.5);
\draw[thick, color=blue] (2.5,2.6)--(2.5,3.5)--(4,3.5);
\draw[thick, color=blue] (1.5,0)--(1.5,1.5)--(4,1.5);
\draw[thick, color=blue] (2.5,0)--(2.5,0.5)--(4,0.5);
\draw[thick, color=blue] (3.5,0)--(3.5,2.5)--(4,2.5);
\filldraw [black](0.5,2.5)circle(.1);
\filldraw [black](1.5,1.5)circle(.1);
\filldraw [black](2.5,0.5)circle(.1);
\draw [black](2.5,2.5)circle(.1);
\filldraw [black](2.5,3.5)circle(.1);
\filldraw [black](3.5,2.5)circle(.1);
\end{tikzpicture} 
\] 

Note \[
I_A = I_1\begin{pmatrix} z_{1,1} & z_{1,2}\end{pmatrix}+I_2\begin{pmatrix} z_{1,1} & z_{1,2} & z_{1,3} \\ z_{2,1} & z_{2,2} & z_{2,3}
\end{pmatrix}. \qedhere
\]
\end{example}

The following lemma gives a description of the inversion set and essential set in terms of the rank function.

\begin{lemma}\label{lem:Ess(A)ByRanks}
    Let $A \in \asm(n)$ and $(i,j) \in [n] \times [n]$.  Then \begin{enumerate} 
    \item $(i,j) \in \inv(A)$ if and only if 
\[
\rk_A(i,j)=\rk_A(i,j-1)=\rk_A(i-1,j),\mbox{ and }
\]
    \item $(i,j) \in \ess(A)$ if and only if 
\[
\rk_A(i,j)=\rk_A(i,j-1)=\rk_A(i-1,j)=\rk_A(i,j+1)-1=\rk_A(i+1,j)-1.
\]
\end{enumerate}
\end{lemma}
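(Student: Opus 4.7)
The plan is to reduce both statements to the behavior of the row and column partial sums of $A$, which read off directly as differences of corner sum values. The key identities are
\[
\rk_A(i,j)-\rk_A(i-1,j)=\sum_{k=1}^{j}A_{i,k} \qquad \text{and} \qquad \rk_A(i,j)-\rk_A(i,j-1)=\sum_{\ell=1}^{i}A_{\ell,j},
\]
each of which lies in $\{0,1\}$ by \cref{lemma:cornerincrease}. Part (1) is then immediate: the definition of $(i,j)\in\inv(A)$ says exactly that both of these partial sums vanish, which happens if and only if $\rk_A(i,j)=\rk_A(i-1,j)=\rk_A(i,j-1)$.

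For part (2), I would split the biconditional into two directions. Suppose first that $(i,j)\in\ess(A)$. Since $(i,j)\in\inv(A)$, part (1) provides the first three equalities. For the remaining two, I show that $(i+1,j)\notin\inv(A)$ forces $\rk_A(i+1,j)=\rk_A(i,j)+1$: if instead $\rk_A(i+1,j)=\rk_A(i,j)$, then monotonicity of the corner sum function gives
\[
\rk_A(i,j)=\rk_A(i,j-1)\le \rk_A(i+1,j-1)\le \rk_A(i+1,j)=\rk_A(i,j),
\]
so equality holds throughout, which by part (1) would put $(i+1,j)\in\inv(A)$, a contradiction. The step condition from \cref{lemma:cornerincrease} then upgrades $\rk_A(i+1,j)\ne \rk_A(i,j)$ to $\rk_A(i+1,j)=\rk_A(i,j)+1$. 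A symmetric argument, interchanging the roles of rows and columns, handles $(i,j+1)\notin\inv(A)$ and yields $\rk_A(i,j+1)=\rk_A(i,j)+1$.

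For the reverse direction, I would assume all five equalities hold. The first three and part (1) give $(i,j)\in\inv(A)$. The equality $\rk_A(i+1,j)=\rk_A(i,j)+1$ means $\rk_A(i+1,j)\ne \rk_A(i,j)$, so the characterization in part (1) applied to $(i+1,j)$ fails, putting $(i+1,j)\notin\inv(A)$; likewise $\rk_A(i,j+1)=\rk_A(i,j)+1\ne \rk_A(i,j)$ forces $(i,j+1)\notin\inv(A)$. The only real obstacle is bookkeeping: keeping straight which of the two equalities from part (1) must fail for each of the neighbors $(i+1,j)$ and $(i,j+1)$, so that the step condition can be invoked on the correct difference. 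Once that is done, part (1) does all the combinatorial work and the argument closes cleanly.
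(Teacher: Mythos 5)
Your proof is correct and follows essentially the same route as the paper, which simply cites \cite[Lemma 3.5]{Wei21} for part (1) and declares part (2) immediate from part (1) and the definition of $\ess(A)$. Your version just makes this explicit and self-contained: the telescoping identities for the corner sums give (1) directly from the definition of $\inv(A)$, and your monotonicity-plus-step-condition argument correctly fills in the details the paper leaves unstated for (2).
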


\begin{proof}
\noindent (1) This follows from \cite[Lemma 3.5]{Wei21}.    

\noindent (2) This is immediate from part (1) and the definition of $\ess(A)$. 
\end{proof}

A permutation is \newword{bigrassmannian} if it has a unique essential cell.  There is a bijection between bigrassmannian permutations in $S_n$ and triples $(i,j,r)$ which satisfy the following conditions:
\begin{enumerate}
    \item $1\leq i,j$,
    \item $0\leq r<\min(i,j)$, and 
    \item $i+j-r\leq n$.
\end{enumerate}
We map such a triple to the unique bigrassmannian permutation $u\in S_n$ with $\ess(u)=\{(i,j)\}$ and $\rk_u(i,j)=r$.  We will write $[(i,j),r]_b$ for the bigrassmannian permutation mapped to by the triple $(i,j,r)$.  
Specifically, if $\mathbf{1}_k$ denotes a $k \times k$ identity matrix, then the bigrassmannian $u$ is, in block matrix form, \[\begin{pmatrix}
\mathbf{1}_r&\mathbf 0&\mathbf 0&\mathbf 0\\
\mathbf 0 &\mathbf 0& \mathbf{1}_{i-r}& \mathbf 0\\
\mathbf 0&\mathbf{1}_{j-r} &\mathbf 0 &\mathbf 0\\
\mathbf 0&\mathbf 0&\mathbf 0&\mathbf{1}_{n-i-j+r}
\end{pmatrix}.\]

Note that the bigrassmannian permutations are exactly those whose matrix Schubert varieties are classical determinantal varieties, up to affine factors.

\begin{example}
    In $S_5$, we have $[(3,4),2]_b$ is the unique permutation $u$ whose essential set is exactly $\{(3,4)\}$ and which satisfies $\rk_u(3,4) = 2$, i.e., $[(3,4),2]_b=12534$, or, in matrix form, \[[(3,4),2]_b=
\begin{pmatrix}
    1 & 0 & 0 & 0 & 0\\
    0 & 1 & 0 & 0 & 0\\
    0 & 0 & 0 & 0 & 1\\
    0 & 0 & 1 & 0 & 0\\
    0 & 0 & 0 & 1 & 0\\
\end{pmatrix}. \qedhere
    \]
\end{example}

\begin{lemma}
\label{lemma:bigrasscompare}
Let $u\in S_n$ be the bigrassmannian permutation $[(i,j),r]_b$.  Then
\begin{enumerate}
    \item $u=\wedge \{A\in \asm(n):\rk_A(i,j)\leq r\}$, and
    \item if $A\in \asm(n)$ so that $\rk_A(i,j)\leq r$ then $u\leq A$.
\end{enumerate}
\end{lemma}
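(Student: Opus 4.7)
The plan is to prove part (2) directly and then deduce part (1) as an immediate consequence. For part (2), fix $A \in \asm(n)$ with $\rk_A(i,j) \leq r$; the goal is to show $\rk_u(a,b) \geq \rk_A(a,b)$ for every $(a,b) \in [n] \times [n]$, which by \cref{proposition:asmorderfacts} is equivalent to $u \leq A$ in strong order.

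To produce an upper bound on $\rk_A(a,b)$, I would rely on the three properties of rank functions from \cref{lemma:cornerincrease}: monotonicity, the $\{0,1\}$-increment conditions on adjacent entries, and the boundary values $\rk_A(k,n) = \rk_A(n,k) = k$. The argument proceeds by case analysis on the position of $(a,b)$ relative to $(i,j)$. In the quadrant $a \leq i, b \leq j$, monotonicity together with the constraint $\rk_A(i,j) \leq r$ yields $\rk_A(a,b) \leq \min(a, b, r)$. In the quadrant $a \geq i, b \leq j$, chaining the increment bound along a lattice path from $(i,j)$ down to $(a,j)$ with monotonicity gives $\rk_A(a,b) \leq \min(b, r + (a-i))$. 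The quadrant $a \leq i, b \geq j$ is handled symmetrically. Finally, in the quadrant $a \geq i, b \geq j$, a lattice path from $(i,j)$ to $(a,b)$ combined with the boundary constraints produces $\rk_A(a,b) \leq \min(a, b, r + (a-i) + (b-j))$.

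Next, I would compute $\rk_u(a,b)$ explicitly in each region using the given block form of $u = [(i,j),r]_b$. A direct count of the $1$'s of $u$ lying weakly northwest of $(a,b)$ verifies that $\rk_u(a,b)$ realizes the upper bound derived above in every case. This proves part (2). For part (1), observe that $u$ itself lies in the set $U = \{A \in \asm(n) : \rk_A(i,j) \leq r\}$, since $\rk_u(i,j) = r$ by inspection of the block form. Together with part (2), this shows $u$ is the maximum element of $U$, and hence $u = \bigwedge_{A \in U} A$; alternatively, by \cref{lemma:joinmeet}, $\rk_{\bigwedge U}(a,b) = \max_{A \in U} \rk_A(a,b) = \rk_u(a,b)$.

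The main obstacle lies in the case-by-case verification that $\rk_u(a,b)$ equals the derived upper bound. Within the quadrant $a \geq i, b \geq j$, the block structure of $u$ splits the computation into subcases depending on whether $(a,b)$ lies beyond the diagonal index $i+j-r$ in one or both coordinates, which controls whether the third, fourth, or both of $u$'s identity blocks contribute to $\rk_u(a,b)$. Once these formulas are laid out, the matching with the derived upper bounds is a routine calculation in each region.
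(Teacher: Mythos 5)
Your proposal is correct, but it takes a genuinely different route from the paper. The paper disposes of this lemma by citation: Part (1) is quoted from \cite[Theorem 30]{BS17}, and Part (2) is then noted to be immediate from Part (1). You reverse the logical order and make the argument self-contained: you prove Part (2) directly by computing $\rk_u$ from the block form of $u=[(i,j),r]_b$ and matching it against upper bounds on $\rk_A$ extracted from \cref{lemma:cornerincrease} (monotonicity, unit increments, boundary values), then deduce Part (1) from Part (2) together with $\rk_u(i,j)=r$ and \cref{lemma:joinmeet}. I checked your quadrant formulas and they do hold: $\rk_u(a,b)$ equals $\min(a,b,r)$, $\min(b,r+(a-i))$, $\min(a,r+(b-j))$, and $\min(a,b,r+(a-i)+(b-j))$ in the four regions, and each coincides with the bound you derive for $\rk_A(a,b)$, so $\rk_u\ge\rk_A$ pointwise and $u\le A$ by \cref{proposition:asmorderfacts}. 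What your route buys is independence from the external reference, at the cost of a multi-case corner-sum computation; the paper's route is shorter but imports the key fact. One small correction: in your deduction of Part (1), $u$ is the \emph{minimum} element of $U=\{A\in\asm(n):\rk_A(i,j)\le r\}$ with respect to strong order (equivalently, its corner sum function is the pointwise maximum over $U$), not the maximum; since the meet is the greatest lower bound, a member of $U$ lying below every element of $U$ is exactly the meet. Your alternative justification via \cref{lemma:joinmeet}, namely $\rk_{\bigwedge U}(a,b)=\max_{A\in U}\rk_A(a,b)=\rk_u(a,b)$ together with injectivity of $A\mapsto\rk_A$, is the clean way to finish and is correct as written.
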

\begin{proof}
    For a proof of Part (1), see \cite[Theorem 30]{BS17}.  Part (2) is immediate from part (1).
\end{proof}

\begin{lemma}
\label{lemma:asmbigrass}
    Let $A\in \asm(n)$, and let $\{u_1, \ldots, u_k\}\ = \{[(i,j),\rk_A(i,j)]_b:(i,j)\in \ess(A)\}$, i.e., the bigrassmannian permutations corresponding to the essential cells of $A$.  Then \begin{enumerate}
        \item $A=\vee\{u_1, \ldots, u_k\}$, \item $I_A = I_{u_1}+\cdots+I_{u_k}$, and 
        \item $X_A = X_{u_1} \cap \cdots \cap X_{u_k}$.
        
    \end{enumerate}
\end{lemma}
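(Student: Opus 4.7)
The plan is to establish part~(2) first, directly from the essential-set identity
\[
I_A=\sum_{(i,j)\in \ess(A)}I_{\rk_A(i,j)+1}(Z_{[i],[j]})
\]
displayed earlier in this section, and then to deduce parts~(1) and~(3) as formal consequences via Proposition~\ref{prop:idealsum} together with the injectivity of the map $B\mapsto I_B$ on $\asm(n)$.

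For part~(2), I would enumerate $\ess(A)=\{(a_1,b_1),\dots,(a_k,b_k)\}$ so that $u_s=[(a_s,b_s),\rk_A(a_s,b_s)]_b$ for each $s$.  A bigrassmannian permutation has a unique essential cell by definition, so $\ess(u_s)=\{(a_s,b_s)\}$, and $\rk_{u_s}(a_s,b_s)=\rk_A(a_s,b_s)$ by construction.  Applying the displayed identity to $u_s$ yields $I_{u_s}=I_{\rk_A(a_s,b_s)+1}(Z_{[a_s],[b_s]})$, and summing over $s$ reconstructs $I_A$ term by term.

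For part~(1), set $C=u_1\vee\cdots\vee u_k$.  By Proposition~\ref{prop:idealsum}(1), $I_C=\sum_s I_{u_s}$, which equals $I_A$ by part~(2).  The map $B\mapsto I_B$ is injective on $\asm(n)$: the map $B\mapsto \rk_B$ is injective (as noted just before \cref{lemma:cornerincrease}), and the two directions of \cref{proposition:asmorderfacts} together show that $I_B$ determines $\rk_B$.  Hence $C=A$.  Part~(3) then follows by applying $\mathbb V$ to the identity from~(2), using radicality of $I_A$, or equivalently by feeding the join presentation from~(1) into Proposition~\ref{prop:idealsum}(2).  I do not anticipate a substantive obstacle; the only point that requires any checking is the single-essential-cell structure of each $u_s$, which is transparent from the block-matrix description of $[(a_s,b_s),\rk_A(a_s,b_s)]_b$ given just above.
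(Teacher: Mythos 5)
Your proposal is correct, but it proceeds in the opposite direction from the paper. The paper proves Part (1) first: it cites \cite[Proposition 3.11]{Wei17} to identify $\{[(i,j),\rk_A(i,j)]_b:(i,j)\in \ess(A)\}$ as the set of bigrassmannian permutations that are maximal among those below $A$ in strong order, and then invokes the lattice-theoretic results \cite[Proposition 2.4]{LS96} and \cite[Lemma 5.4]{LS96} to conclude $A=\vee\{u_1,\ldots,u_k\}$; Parts (2) and (3) then follow from \cref{prop:idealsum}. You instead prove Part (2) first, using the essential-set generation identity $I_A=\sum_{(i,j)\in\ess(A)}I_{\rk_A(i,j)+1}(Z_{[i],[j]})$ (stated in the paper with citations to \cite{Ful92,Wei17}) applied both to $A$ and to each $u_s$ — where $\ess(u_s)=\{(a_s,b_s)\}$ and $\rk_{u_s}(a_s,b_s)=\rk_A(a_s,b_s)$ hold by the very definition of $[(i,j),r]_b$, so no checking is really needed there — and then recover Part (1) from \cref{prop:idealsum} together with the injectivity of $B\mapsto I_B$, which indeed follows from injectivity of $B\mapsto\rk_B$ and \cref{proposition:asmorderfacts}; Part (3) is immediate (radicality is not even needed, since $\mathbb V$ of a sum of ideals is the intersection of the $\mathbb V$'s). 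Both routes lean on external results: yours on \cite[Lemma 5.9]{Wei17} via the displayed essential-set identity, the paper's on \cite[Proposition 3.11]{Wei17} and on Lascoux--Sch\"utzenberger's lattice results. What the paper's argument buys is the sharper combinatorial fact that the $u_s$ are precisely the maximal bigrassmannians below $A$, which is of independent use; what your argument buys is a shorter, more algebraic derivation that stays entirely within facts already displayed earlier in the section and avoids the \cite{LS96} lattice machinery.
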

\begin{proof}
    For Part (1), by \cite[Proposition 3.11]{Wei17}, $\{[(i,j),\rk_A(i,j)]_b:(i,j)\in \ess(A)\}$ is set of bigrassmannian permutations which are maximal among those below $A$ in strong order.  Thus, the statement $A=\vee \{[(i,j),\rk_A(i,j)]_b:(i,j)\in \ess(A)\}$ follows from \cite[Proposition 2.4]{LS96} and \cite[Lemma 5.4]{LS96}.
    
    Parts (2) and (3) follow from Part (1) together with \cref{prop:idealsum}.
\end{proof}

\begin{example}
Returning to the ASM $A$ from \cref{example:rothe} and taking $u_1 = 3124$ and $u_2 = 1423$, the reader may verify $I_A = I_{u_1}+I_{u_2}$.  To see that $u_1$ and $u_2$ are both bigrassmannian permutations, note that $3124 = [(1,2),0]_b$ and $1423 = [(2,3),1]_b$.
\end{example}

\begin{lemma}\label{lemma:new-ASM-from-increasing-at-essential-cell}
    Let $A \in \asm(n)$, and suppose that $(i,j) \in \ess(A)$.  Let $f:[n] \times [n] \rightarrow [0,n]$ be the function \[
f(a,b) = \begin{cases}
 \rk_A(a,b) & \text{ if } (a,b) \neq (i,j)\\
 \rk_A(i,j)+1 & \text{ if } (a,b) = (i,j).
\end{cases}
    \] Then there exists $B \in \asm(n)$ so that $\rk_B = f$.  Specifically, \[
B_{a,b} = \begin{cases}
A_{a,b} & \text{ if } (a,b) \notin \{(i,j),(i+1,j),(i,j+1),(i+1,j+1)\} \\
A_{a,b}+1 & \text{ if } (a,b) \in \{(i,j), (i+1,j+1)\}\\
A_{a,b}-1 & \text{ if } (a,b) \in \{(i+1,j), (i,j+1)\}.
\end{cases}
    \]
\end{lemma}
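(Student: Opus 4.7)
The plan is to apply the Robbins--Rumsey characterization \cref{lemma:cornerincrease} to show that $f$ extends to a valid corner sum function of some $B \in \asm(n)$, and then to recover the entries of $B$ from $f$ via a two-dimensional inclusion-exclusion.

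First I would observe that since the last-row and last-column entries of any ASM force the relevant partial row or column sum to equal $1$, no cell $(i,j)$ with $i = n$ or $j = n$ can be an inversion. Hence every essential cell $(i,j) \in \ess(A)$ satisfies $i, j \leq n-1$, so the four cells $\{(i,j),(i+1,j),(i,j+1),(i+1,j+1)\}$ lie within $[n]\times[n]$, and $f$ (extended to $[0,n]\times[0,n]$ by agreeing with $\rk_A$ on the boundary) coincides with $\rk_A$ on the boundary. Thus conditions (1) and (2) of \cref{lemma:cornerincrease} are inherited from $\rk_A$, and the content reduces to verifying the adjacency condition (3).

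Next I would check condition (3). Since $f$ and $\rk_A$ differ only at the cell $(i,j)$, the only adjacent differences in $f$ that change from those in $\rk_A$ are $f(i,j) - f(i-1,j)$, $f(i,j) - f(i,j-1)$, $f(i+1,j)-f(i,j)$, and $f(i,j+1)-f(i,j)$. By \cref{lem:Ess(A)ByRanks}(2), the first two were both $0$ in $\rk_A$ and become $1$ in $f$; the latter two were both $1$ in $\rk_A$ and become $0$ in $f$. All four remain in $\{0,1\}$, so \cref{lemma:cornerincrease} supplies the desired $B \in \asm(n)$ with $\rk_B = f$.

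Finally I would read off the entries of $B$ via the inclusion-exclusion identity $B_{a,b} = \rk_B(a,b) - \rk_B(a-1,b) - \rk_B(a,b-1) + \rk_B(a-1,b-1)$, valid for any ASM. Since $f$ and $\rk_A$ differ only at $(i,j)$, the entry $B_{a,b}$ differs from $A_{a,b}$ precisely when $(i,j)$ appears among the four arguments on the right, that is, when $(a,b) \in \{(i,j),(i+1,j),(i,j+1),(i+1,j+1)\}$. A direct computation of the signs at each of these four cells yields the stated formula for $B$. I do not expect a genuine obstacle; the argument is bookkeeping driven by \cref{lemma:cornerincrease} and the rank characterization of essential cells, with the mildest subtlety being the verification that essential cells are confined to the strict interior so that the $2 \times 2$ block sits inside the matrix.
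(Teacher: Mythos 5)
Your proof is correct and is essentially the argument the paper intends: the paper treats this lemma as ``a simple exercise from the definitions,'' deferring details to \cite[Lemma 2]{BS17}, and your verification is exactly that routine check. Applying \cref{lemma:cornerincrease} with the rank characterization of essential cells from \cref{lem:Ess(A)ByRanks}, noting that essential cells avoid the last row and column, and recovering the entries of $B$ by inclusion--exclusion on $\rk_B$ is precisely the intended bookkeeping, with no gaps.
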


\begin{proof}
This is a simple exercise from the definitions of $\asm$ and corner sum functions.  See, for example, \cite[Lemma 2]{BS17} for further details.
\end{proof}

\begin{example}\label{ex:new-ASM-from-increasing-at-essential-cell}
    Returning to \cref{ex:first-pi_i-example}, note that $\rk_A(a,b) = \rk_w(a,b)$ for all $(a,b) \neq (3,4)$ and that $\rk_A(3,4) = \rk_w(3,4)+1$.  Similarly, $B$ is obtained from $A$ by modifying the rank function at $(3,2)$, and $ws_3$ obtained from $B$ by modifying the rank function at $(3,3)$.  We repeat these four ASMs below, this time recorded via their Rothe diagrams to facilitate identifying their essential cells.

    \[
\raisebox{1.4cm}{$w$: }\begin{tikzpicture}[x=1.5em,y=1.5em]
\draw[step=1,gray, thin] (0,0) grid (5,5);
\draw[color=black, thick](0,0)rectangle(5,5);

\draw[thick, color=blue] (.5,0)--(.5,3.5)--(5,3.5);
\draw[thick, color=blue] (1.5,0)--(1.5,1.5)--(5,1.5);
\draw[thick, color=blue] (2.5,0)--(2.5,4.5)--(5,4.5);
\draw[thick, color=blue] (3.5,0)--(3.5,.5)--(5,.5);
\draw[thick, color=blue] (4.5,0)--(4.5,2.5)--(5,2.5);
\filldraw[black](0.5,3.5)circle(.1);
\filldraw[black](1.5,1.5)circle(.1);
\filldraw[black](2.5,4.5)circle(.1);
\filldraw[black](3.5,0.5)circle(.1);
\filldraw[black](4.5,2.5)circle(.1);
\end{tikzpicture} \hspace{3cm} 
\raisebox{1.4cm}{$A$: }\begin{tikzpicture}[x=1.5em,y=1.5em]
\draw[step=1,gray, thin] (0,0) grid (5,5);
\draw[color=black, thick](0,0)rectangle(5,5);

\draw[thick, color=blue] (.5,0)--(.5,3.5)--(5,3.5);
\draw[thick, color=blue] (1.5,0)--(1.5,1.5)--(3.4,1.5);
\draw[thick, color=blue] (3.5,1.6)--(3.5,2.5)--(5,2.5);
\draw[thick, color=blue] (2.5,0)--(2.5,4.5)--(5,4.5);
\draw[thick, color=blue] (3.5,0)--(3.5,.5)--(5,.5);
\draw[thick, color=blue] (4.5,0)--(4.5,1.5)--(5,1.5);
\filldraw[black](0.5,3.5)circle(.1);
\filldraw[black](1.5,1.5)circle(.1);
\filldraw[black](2.5,4.5)circle(.1);
\filldraw[black](3.5,0.5)circle(.1);
\draw[black](3.5,1.5)circle(.1);
\filldraw[black](3.5,2.5)circle(.1);
\filldraw[black](4.5,1.5)circle(.1);
\end{tikzpicture}
\]  \[
\raisebox{1.4cm}{$B$: }\begin{tikzpicture}[x=1.5em,y=1.5em]
\draw[step=1,gray, thin] (0,0) grid (5,5);
\draw[color=black, thick](0,0)rectangle(5,5);
\draw[thick, color=blue] (.5,0)--(.5,3.5)--(5,3.5);
\draw[thick, color=blue] (1.5,0)--(1.5,2.5)--(2.4,2.5);
\draw[thick, color=blue] (2.5,2.6)--(2.5,4.5)--(5,4.5);
\draw[thick, color=blue] (2.5,0)--(2.5,1.5)--(3.4,1.5);
\draw[thick, color=blue] (3.5,1.6)--(3.5,2.5)--(5,2.5);
\draw[thick, color=blue] (3.5,0)--(3.5,.5)--(5,.5);
\draw[thick, color=blue] (4.5,0)--(4.5,1.5)--(5,1.5);
\filldraw[black](0.5,3.5)circle(.1);
\filldraw[black](1.5,2.5)circle(.1);
\filldraw[black](2.5,4.5)circle(.1);
\filldraw[black](2.5,1.5)circle(.1);
\draw[black](2.5,2.5)circle(.1);
\filldraw[black](3.5,0.5)circle(.1);
\draw[black](3.5,1.5)circle(.1);
\filldraw[black](3.5,2.5)circle(.1);
\filldraw[black](4.5,1.5)circle(.1);

\end{tikzpicture} \hspace{2.6cm} 
\raisebox{1.4cm}{$ws_3$: }\begin{tikzpicture}[x=1.5em,y=1.5em]
\draw[step=1,gray, thin] (0,0) grid (5,5);
\draw[color=black, thick](0,0)rectangle(5,5);
\draw[thick, color=blue] (.5,0)--(.5,3.5)--(5,3.5);
\draw[thick, color=blue] (1.5,0)--(1.5,2.5)--(5,2.5);
\draw[thick, color=blue] (2.5,0)--(2.5,4.5)--(5,4.5);
\draw[thick, color=blue] (3.5,0)--(3.5,.5)--(5,.5);
\draw[thick, color=blue] (4.5,0)--(4.5,1.5)--(5,1.5);
\filldraw[black](0.5,3.5)circle(.1);
\filldraw[black](1.5,2.5)circle(.1);
\filldraw[black](2.5,4.5)circle(.1);
\filldraw[black](3.5,0.5)circle(.1);
\filldraw[black](4.5,1.5)circle(.1);

\end{tikzpicture} \qedhere
\] 
\end{example}

\begin{lemma}\label{lem:cover-by-add-to-ess-cell}
    Let $A,B\in \asm(n)$.  Then $A$ covers $B$ in strong order if and only if there exists $(i,j)\in\ess(A)$ so that $\rk_A(a,b)=\rk_B(a,b)$ for all $(a,b)\neq (i,j)$ and $\rk_A(i,j)+1=\rk_B(i,j)$.  
\end{lemma}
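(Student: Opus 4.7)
The plan is to prove the two directions separately, with both arguments hinging on \cref{lemma:new-ASM-from-increasing-at-essential-cell} (which produces an ASM whose rank function differs from a given one by $+1$ at a single essential cell) and, for the forward direction, on the bigrassmannian decomposition \cref{lemma:asmbigrass}.

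For the easier direction ($\Leftarrow$), suppose such a cell $(i,j) \in \ess(A)$ is given. By \cref{lemma:new-ASM-from-increasing-at-essential-cell}, the function $\rk_B$ described in the statement is in fact the corner sum function of an ASM, and the entries of $B$ can be read off explicitly from that lemma. Since $\rk_B \geq \rk_A$ pointwise with strict inequality at $(i,j)$, \cref{proposition:asmorderfacts} gives $A > B$ in strong order. To rule out anything strictly between, suppose $A > C > B$. Then by \cref{proposition:asmorderfacts}, $\rk_A(a,b) \leq \rk_C(a,b) \leq \rk_B(a,b)$ for all $(a,b)$. Since $\rk_A$ and $\rk_B$ agree off $(i,j)$, so must $\rk_C$; at $(i,j)$, the integer $\rk_C(i,j)$ would have to lie strictly between the consecutive integers $\rk_A(i,j)$ and $\rk_A(i,j)+1$, which is impossible. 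Hence $A$ covers $B$.

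For the harder direction ($\Rightarrow$), suppose $A$ covers $B$. By \cref{lemma:asmbigrass}(1), $A = \bigvee\{[(i,j),\rk_A(i,j)]_b : (i,j) \in \ess(A)\}$. If every such bigrassmannian $u = [(i,j),\rk_A(i,j)]_b$ satisfied $u \leq B$, then taking the join would yield $A \leq B$, contradicting $A > B$. Therefore there is some essential cell $(i,j)$ of $A$ with $[(i,j),\rk_A(i,j)]_b \not\leq B$. By \cref{lemma:bigrasscompare}(2), this means $\rk_B(i,j) > \rk_A(i,j)$ (a strict inequality among the weak inequalities $\rk_B \geq \rk_A$ guaranteed by $B < A$).

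With such an essential cell in hand, apply \cref{lemma:new-ASM-from-increasing-at-essential-cell} to produce an ASM $C$ obtained from $A$ by incrementing the rank at $(i,j)$ by one. Then $\rk_C \geq \rk_A$ pointwise with equality off $(i,j)$, so $C < A$; and $\rk_C(a,b) \leq \rk_B(a,b)$ for all $(a,b)$ because equality $\rk_C = \rk_A \leq \rk_B$ holds off $(i,j)$ while $\rk_C(i,j) = \rk_A(i,j)+1 \leq \rk_B(i,j)$ follows from integrality. Thus $B \leq C < A$. Since $A$ covers $B$, we must have $C = B$, which gives precisely the stated description. The main obstacle is locating an essential cell of $A$ where the rank strictly increases, and the bigrassmannian decomposition resolves it cleanly by turning a global comparison $A \not\leq B$ into a comparison of ranks at a single essential cell.
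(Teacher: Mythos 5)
Your proof is correct and follows essentially the same route as the paper's: both directions hinge on \cref{lemma:new-ASM-from-increasing-at-essential-cell}, and the forward direction uses the bigrassmannian join decomposition of \cref{lemma:asmbigrass} together with \cref{lemma:bigrasscompare} to locate an essential cell where the rank strictly increases, then squeezes $B \leq C < A$ and invokes the covering hypothesis. The only difference is cosmetic (you argue the existence of the strict-increase cell by contrapositive where the paper argues by contradiction, and you spell out the backward direction that the paper calls clear).
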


\begin{proof}
\noindent $(\Rightarrow)$ Suppose $A$ covers $B$ in strong order.  If $\rk_A(i,j)=\rk_B(i,j)$ for all $(i,j)\in \ess(A)$, then, by \cref{lemma:bigrasscompare}, we have that $B\geq [(i,j),\rk_A(i,j)]_b$ for all $(i,j)\in \ess(A)$.  Thus, $B\geq \vee \{[(i,j),\rk_A(i,j)]_b:(i,j)\in \ess(A)\}$.  By \cref{lemma:asmbigrass}, 
$$A=\vee \{[(i,j),\rk_A(i,j)]_b:(i,j)\in \ess(A)\}\leq B\leq A.$$
Thus $B=A$, which is a contradiction.

Because $A \geq B$, we know $\rk_A(i,j) \leq \rk_B(i,j)$ for all $(i,j) \in [n] \times [n]$.  Hence, there exists some $(r,s) \in \ess(A)$ so that $\rk_A(r,s) < \rk_B(r,s)$.  Let $C$ be the ASM obtained from $A$ and $(r,s)$ using \cref{lemma:new-ASM-from-increasing-at-essential-cell}.  By construction, $B \leq C < A$, and $C$ has a rank function of the desired form.  Because $A$ covers $B$, $B = C$.  

\noindent $(\Leftarrow)$  Let $A,B\in \asm(n)$  and suppose there exists $(i,j)\in\ess(A)$ so that $\rk_A(a,b)=\rk_B(a,b)$ for all $(a,b)\neq (i,j)$ and $\rk_A(i,j)+1=\rk_B(i,j)$. This implies $A > B$, and it is clear that it is a covering relation. \end{proof}

\section{Components and codimension of $X_A$ in terms of the weak order poset}

In this section, we will relate algebro-geometric properties of ASM varieties to the poset $\asm(n)$ under weak order.  Our main goals are to show that the weak order operators $\pi_i$ respect strong order (\cref{cor:pi_iorderpreserving}) and to give the codimension of an ASM variety in terms of the weak order poset (\cref{cor:codimension-saturated-chains}).  

\subsection{Compatibility of weak order operators and strong order}

\begin{lemma}\label{lem:oneStep}
    Let $A \in \asm(n)$, and suppose that $(i,j)$ is an essential cell of $A$.  Let $B$ be the ASM so that $\rk_{B}(i,j) = \rk_A(i,j)+1$ and $\rk_{B}(a,b) = \rk_A(a,b)$ for all $(a,b) \neq (i,j)$.  Then $\pi_i(A) = \pi_i(B)$.
\end{lemma}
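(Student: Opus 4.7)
The proof should be essentially a one-line unpacking of the definition of $\pi_i$ once one isolates the correct observation.

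The plan is as follows. First, I would observe that under the hypothesis, the rank functions $\rk_A$ and $\rk_B$ differ only at the single cell $(i,j)$, and this cell lies in row $i$. Consequently, for every $(a,b) \in [n] \times [n]$ with $a \neq i$, we have $\rk_A(a,b) = \rk_B(a,b)$.

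Next, I would recall \cref{def:pi_i}, which defines
\[
\pi_i(A) = \min\{C \in \asm(n) : \rk_A(a,b) = \rk_C(a,b) \text{ for all } a,b \in [n] \text{ with } a \neq i\}
\]
and analogously for $\pi_i(B)$. By the observation above, the constraint ``$\rk_C(a,b) = \rk_A(a,b)$ for all $a \neq i$'' on $C$ is literally the same constraint as ``$\rk_C(a,b) = \rk_B(a,b)$ for all $a \neq i$''. Therefore the two sets whose minima define $\pi_i(A)$ and $\pi_i(B)$ coincide as subsets of $\asm(n)$, and hence their minima coincide as well. Well-definedness of these minima is guaranteed by \cref{lemma:sublattice} (which shows each constraint set is a sublattice of the finite lattice $\asm(n)$), and non-emptiness is immediate from $A$ (respectively $B$) being a member.

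There is no real obstacle here; the content of the lemma is entirely a bookkeeping remark that altering the corner sum function at a single cell in row $i$ does not affect the row-$a$-for-$a\neq i$ constraints used to define $\pi_i$. The only reason the hypothesis that $(i,j) \in \ess(A)$ is needed is to ensure (via \cref{lemma:new-ASM-from-increasing-at-essential-cell}) that a valid ASM $B$ with the prescribed rank function exists in the first place; it plays no further role in the proof.
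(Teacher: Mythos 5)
Your argument is correct and is essentially identical to the paper's proof: the paper likewise invokes \cref{lemma:new-ASM-from-increasing-at-essential-cell} to guarantee $B$ exists, observes that $\rk_A$ and $\rk_B$ differ only in row $i$ so the two constraint sets in \cref{def:pi_i} coincide, and concludes $\pi_i(A)=\pi_i(B)$ directly from the definition. Nothing is missing.
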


\begin{proof}
By \cref{lemma:new-ASM-from-increasing-at-essential-cell}, the ASM $B$ in the statement of the present lemma exists.
Because $r_A$ and $r_B$ only differ in row $i$,
\begin{align*}
    \{C\in \asm(n):\rk_A(a,b)&=\rk_C(a,b) \text{ for all } a,b\in [n] \text{ with } a\neq i\}=\\
    &\{C\in \asm(n):\rk_B(a,b)=\rk_C(a,b) \text{ for all } a,b\in [n] \text{ with } a\neq i\}.
    \end{align*}
    Thus, by definition, $\pi_i(A)=\pi_i(B)$.
\end{proof}

    For an example, we again direct the reader to \cref{ex:first-pi_i-example} (or \cref{ex:new-ASM-from-increasing-at-essential-cell}).

\begin{lemma}
\label{lemma:descentessential}
    Let $A\in\asm(n)$.  Then $i$ is a descent of $A$ if and only if there is an essential cell in row $i$ of $A$.
\end{lemma}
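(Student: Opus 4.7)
The plan is to prove both directions by invoking results already established about $\pi_i$ and about covering relations in strong order, using corner sum functions as the main bookkeeping tool.

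For the $(\Leftarrow)$ direction, suppose $(i,j) \in \ess(A)$ for some $j$. I would apply \cref{lemma:new-ASM-from-increasing-at-essential-cell} to produce $B \in \asm(n)$ whose corner sum function satisfies $\rk_B(i,j) = \rk_A(i,j)+1$ and $\rk_B(a,b) = \rk_A(a,b)$ for all $(a,b)\neq (i,j)$. Since $\rk_B \geq \rk_A$ pointwise with strict inequality at $(i,j)$, \cref{proposition:asmorderfacts} gives $B < A$ in strong order. Because the rank functions of $B$ and $A$ agree in every row other than $i$, the element $B$ lies in the set whose minimum defines $\pi_i(A)$, forcing $\pi_i(A)\le B < A$ and hence $\pi_i(A)\neq A$, i.e., $i$ is a descent.

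For the $(\Rightarrow)$ direction, suppose $i$ is a descent and set $B = \pi_i(A) < A$, so $\rk_B(a,b) = \rk_A(a,b)$ whenever $a \neq i$. Since $\asm(n)$ is finite, I would pick a saturated chain $A = A_0 \gtrdot A_1 \gtrdot \cdots \gtrdot A_k = B$ in strong order. By \cref{lem:cover-by-add-to-ess-cell}, each cover $A_{t-1}\gtrdot A_t$ is witnessed by some essential cell $(r_t,s_t)\in \ess(A_{t-1})$ at which the rank function is incremented by exactly $1$ and is unchanged elsewhere. Telescoping these single-position increments along the chain yields
\[
\rk_B(a,b) - \rk_A(a,b) \;=\; \bigl|\{t\in[k] : (r_t,s_t) = (a,b)\}\bigr|
\]
for every $(a,b)\in[n]\times[n]$. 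Because this difference vanishes outside row $i$, every $(r_t,s_t)$ must have $r_t = i$; in particular, $(r_1,s_1)\in \ess(A_0) = \ess(A)$ is the desired essential cell of $A$ in row $i$.

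The main subtlety to be alert to is that \cref{lem:cover-by-add-to-ess-cell} only records an essential cell of the \emph{current} term $A_{t-1}$, not of $A$ itself; the argument exploits this only at the first step $t=1$, where $A_{t-1}=A$. Everything else is a telescoping bookkeeping argument combined with the definition of $\pi_i(A)$ as a minimum in strong order, so neither direction requires delicate combinatorial analysis.
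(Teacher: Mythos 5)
Your proof is correct and takes essentially the same route as the paper's: the backward direction uses the ASM produced by \cref{lemma:new-ASM-from-increasing-at-essential-cell} lying in the set whose minimum defines $\pi_i(A)$ (the paper packages this step as \cref{lem:oneStep}), and the forward direction refines $\pi_i(A)<A$ into a saturated chain of strong-order covers and applies \cref{lem:cover-by-add-to-ess-cell} at the cover adjacent to $A$. Your telescoping identity is just a direct reformulation of the paper's observation that the rank discrepancy created at that top cover persists down to $\rk_{\pi_i(A)}$, which must agree with $\rk_A$ outside row $i$.
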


\begin{proof}
Let $A\in \asm(n)$.
    \noindent $(\Rightarrow)$ Suppose that $i$ is a descent of $A$.  Then $\pi_i(A)<A$, which implies that $\pi_i(A)$ can be obtained from $A$ via a sequence of strong order covers $\pi_i(A) = A_0 < A_1 < \cdots < A_k = A$ with $k \geq 1$. By \cref{lem:cover-by-add-to-ess-cell}, the covering relation $A_{k-1} < A$ requires the existence of some $(a,b) \in \ess(A)$ such that $\rk_{A}(a,b)<\rk_{A_{k-1}}(a,b)$.  Then necessarily $\rk_A(a,b)<\rk_{\pi_i(A)}(a,b)$. If $A$ does not have an essential cell in row $i$, then $a \neq i$ and $\rk_A$ and $\rk_{\pi_i(A)}$ differ outside of row $i$, in violation of the definition of $\pi_i(A)$.

    \noindent $(\Leftarrow)$ Suppose $A$ has an essential cell in row $i$.  By \cref{lem:oneStep}, there exists $B<A$ with $\pi_i(A) = \pi_i(B)$.  Then $\pi_i(A) =\pi_i(B) \leq B <A$, as desired.
\end{proof}

Note that \cref{lemma:descentessential} implies that any $A \in \asm(n)$ with an essential cell in each row from $1$ to $n-1$ is a maximal element with respect to weak order.  For example, the $9$ maximal elements of $\asm(4)$ are \[
 \begin{pmatrix}
    0 & 1 & 0 & 0\\
    1 & -1 & 1 & 0\\
    0 & 1 & -1 & 1\\
    0 & 0 & 1 & 0
\end{pmatrix}, \quad \begin{pmatrix}
    0 & 1 & 0 & 0\\
    0 & 0 & 1 & 0\\
    1 & 0 & -1 & 1\\
    0 & 0 & 1 & 0
\end{pmatrix}, \quad \begin{pmatrix}
    0 & 0 & 1 & 0\\
    0 & 1 & 0 & 0\\
    1 & 0 & -1 & 1\\
    0 & 0 & 1 & 0
\end{pmatrix}, \] \[ \begin{pmatrix}
    0 & 0 & 1 & 0\\
    0 & 1 & 0 & 0\\
    1 & -1 & 0 & 1\\
    0 & 1 & 0 & 0
\end{pmatrix}, \quad \begin{pmatrix}
    0 & 1 & 0 & 0\\
    1 & -1 & 0 & 1\\
    0 & 0 & 1 & 0\\
    0 & 1 & 0 & 0
\end{pmatrix}, \quad
\begin{pmatrix}
    0 & 0 & 1 & 0\\
    0 & 1 & -1 & 1\\
    1 & -1 & 1 & 0\\
    0 & 1 & 0 & 0
\end{pmatrix}, \] \[
\begin{pmatrix}
    0 & 0 & 0 & 1\\
    0 & 1 & 0 & 0\\
    1 & -1 & 1 & 0\\
    0 & 1 & 0 & 0
\end{pmatrix},
\quad
\begin{pmatrix}
    0 & 0 & 1 & 0\\
    0 & 1 & -1 & 1\\
    0 & 0 & 1 & 0\\
    1 & 0 & 0 & 0
\end{pmatrix}, 
\mbox{ and } 
\begin{pmatrix}
    0 & 0 & 0 & 1\\
    0 & 0 & 1 & 0\\
    0 & 1 & 0 & 0\\
    1 & 0 & 0 & 0
\end{pmatrix}.
\]  

This situation stands in contrast to that of weak order on $S_n$, in which the only top element is $w_0=n \, n-1 \, \ldots \, 1$.  

It will be shown in \cite{EKW} that $A \in \asm(n)$ is maximal with respect to weak order if and only if it has an essential cell in each row from $1$ to $n-1$.

\begin{lemma}
\label{lemma:joindescent}
    Suppose $A,B,C\in \asm(n)$ so that $A=B\vee C$ (equivalently, $I_A = I_B+I_C$).  If $i$ is a descent of $A$, then $i$ is a descent of $B$ or $i$ is a descent of $C$.  
\end{lemma}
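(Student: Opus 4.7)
The plan is to transfer the combinatorial condition on descents into a condition on essential cells via \cref{lemma:descentessential}, and then show that the essential cell witnessing the descent of $A$ persists in at least one of $B$ or $C$, by a direct calculation with corner sum functions.

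First I would invoke \cref{lemma:descentessential}: since $i$ is a descent of $A$, there exists $(i,j) \in \ess(A)$, and it suffices to produce an essential cell in row $i$ of either $B$ or $C$. By \cref{lem:Ess(A)ByRanks}, the hypothesis $(i,j) \in \ess(A)$ says
\[
\rk_A(i,j) \;=\; \rk_A(i{-}1,j) \;=\; \rk_A(i,j{-}1) \;=\; \rk_A(i,j{+}1)-1 \;=\; \rk_A(i{+}1,j)-1.
\]
By \cref{lemma:joinmeet}, $\rk_A(a,b) = \min\{\rk_B(a,b), \rk_C(a,b)\}$ for all $(a,b)$. Pick whichever of $B, C$ attains the minimum at $(i,j)$, and call it $B$ without loss of generality, so that $\rk_B(i,j) = \rk_A(i,j)$. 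The goal is then to show that $(i,j) \in \ess(B)$.

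The verification reduces to four one-line squeezes, each using \cref{lemma:cornerincrease} (so that $\rk_B$ grows by $0$ or $1$ in each coordinate direction) together with the pointwise inequality $\rk_A \le \rk_B$ coming from \cref{lemma:joinmeet}. For instance,
\[
\rk_B(i{-}1,j) \;\le\; \rk_B(i,j) \;=\; \rk_A(i,j) \;=\; \rk_A(i{-}1,j) \;\le\; \rk_B(i{-}1,j)
\]
forces $\rk_B(i{-}1,j) = \rk_B(i,j)$, and the cell $(i,j{-}1)$ is symmetric. Dually,
\[
\rk_A(i,j)+1 \;=\; \rk_A(i,j{+}1) \;\le\; \rk_B(i,j{+}1) \;\le\; \rk_B(i,j)+1 \;=\; \rk_A(i,j)+1
\]
gives $\rk_B(i,j{+}1) = \rk_B(i,j)+1$, and the cell $(i{+}1,j)$ is symmetric. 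Thus $\rk_B$ exhibits the same plus-sign pattern at $(i,j)$ that $\rk_A$ does, so by \cref{lem:Ess(A)ByRanks} we get $(i,j) \in \ess(B)$, and then \cref{lemma:descentessential} concludes that $i$ is a descent of $B$. I do not anticipate any real obstacle here; the argument is just careful bookkeeping of inequalities at the five cells in the plus-sign around $(i,j)$, and it uses no information beyond the ``join $=$ pointwise min of ranks'' statement and the standard monotonicity of corner sum functions.
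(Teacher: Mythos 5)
Your proposal is correct and follows essentially the same route as the paper's proof: use \cref{lemma:descentessential} to get an essential cell $(i,j)$ of $A$, use \cref{lemma:joinmeet} to pick the factor (WLOG $B$) attaining the minimum rank at $(i,j)$, squeeze the four neighboring rank values via \cref{lemma:cornerincrease} and the pointwise inequality $\rk_A\le\rk_B$, and conclude $(i,j)\in\ess(B)$ by \cref{lem:Ess(A)ByRanks}. No gaps; this is the paper's argument in slightly different packaging.
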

\begin{proof}
  Suppose $i$ is a descent of $A$.  Then by \cref{lemma:descentessential}, $A$ has an essential cell in row $i$, say at position $(i,j)$.  For convenience, write $\rk_A(i,j)=k$.  Since $(i,j)$ is an essential cell, by \cref{lem:Ess(A)ByRanks} we have $\rk_A(i+1,j)=\rk_A(i,j+1)=k+1$ and $\rk_A(i-1,j)=\rk_A(i,j-1)=k$.
  Because $A=B\vee C$, we know from \cref{lemma:joinmeet} that
  \begin{equation}
      \label{eq:rankmin}
      \rk_A(a,b)=\min\left(\rk_B(a,b),\rk_C(a,b)\right) \text{ for all } a,b\in[n].
  \end{equation}
  Thus, $\rk_B(i,j)=k$ or $\rk_C(i,j)=k$.  Without loss of generality, assume $\rk_B(i,j)=k$.  From \cref{eq:rankmin} and $\rk_A(i+1,j)=\rk_A(i,j+1)=k+1$, we have $\rk_B(i+1,j), \rk_B(i,j+1) \geq k+1$.  From \cref{lemma:cornerincrease}, $\rk_B(i+1,j), \rk_B(i,j+1) \leq k+1$.  Hence $\rk_B(i+1,j)=\rk_B(i,j+1)=k+1$.  Again by \cref{eq:rankmin}, $\rk_B(i-1,j), \rk_B(i,j-1)\geq k$.  But also $\rk_B(i-1,j), \rk_B(i,j-1)\leq \rk_B(i,j) = k$, and so $\rk_B(i-1,j)= \rk_B(i,j-1) = k$. Applying \cref{lem:Ess(A)ByRanks} again, we conclude that $(i,j)$ is an essential cell of $B$ as well.  The result now follows from \cref{lemma:descentessential}.
\end{proof}

\begin{proposition}\label{prop:joindescentpart3}
    Suppose $A,B_1,\ldots,B_k\in \asm(n)$ so that $A=B_1\vee \cdots \vee B_k$.  Let $i\in [n-1]$.  Then $\pi_i(A)=\pi_i(B_1)\vee \cdots \vee \pi_i(B_k)$.  Equivalently, if $I_A = \sum_{j=1}^k I_{B_j}$, then $I_{\pi_i(A)} = \sum_{j=1}^k I_{\pi_i({B_j})}$.
\end{proposition}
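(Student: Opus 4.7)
The plan is to reduce the statement to an identity of corner sum functions via \cref{lemma:joinmeet}, which tells us that joins in $\asm(n)$ correspond to pointwise minima of corner sum functions. Setting $C := \pi_i(B_1) \vee \cdots \vee \pi_i(B_k)$, the goal becomes $\rk_C = \rk_{\pi_i(A)}$. For any $(a,b)$ with $a \neq i$, equality is immediate: by the definition of $\pi_i$, we have $\rk_{\pi_i(B_j)}(a,b) = \rk_{B_j}(a,b)$ for each $j$, so \cref{lemma:joinmeet} yields $\rk_C(a,b) = \min_j \rk_{B_j}(a,b) = \rk_A(a,b) = \rk_{\pi_i(A)}(a,b)$. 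Thus the content is entirely in row $i$.

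The main step will be to establish the explicit formula
\[
\rk_{\pi_i(X)}(i,b) \;=\; \min\bigl(\rk_X(i-1,b)+1,\ \rk_X(i+1,b)\bigr)
\]
for every $X \in \asm(n)$ and every $b \in [0,n]$. To derive it, I would use that $\pi_i(X)$ is the strong-order minimum of the sublattice $U_X$ from \cref{lemma:sublattice}; since strong order reverses corner sums, the row-$i$ values of $\rk_{\pi_i(X)}$ are the largest values that yield a valid corner sum function while leaving the other rows of $\rk_X$ unchanged. The increment conditions of \cref{lemma:cornerincrease} bound $\rk(i,b)$ above by both $\rk_X(i-1,b)+1$ and $\rk_X(i+1,b)$, so the pointwise minimum of these is the natural candidate. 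The main technical check, which I expect to be the only real obstacle, is verifying that this candidate is itself a valid corner sum function: that its row-$i$ increments lie in $\{0,1\}$ and that its increments with rows $i-1$ and $i+1$ also lie in $\{0,1\}$. A short case analysis on the value $\rk_X(i+1,b)-\rk_X(i-1,b)\in\{0,1,2\}$ (the only values possible since adjacent rows of a corner sum function differ by at most $1$) completes the verification.

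With the formula in hand, the row-$i$ identity follows by commuting nested minima:
\[
\rk_C(i,b) = \min_j \min\bigl(\rk_{B_j}(i-1,b)+1,\, \rk_{B_j}(i+1,b)\bigr) = \min\bigl(\rk_A(i-1,b)+1,\, \rk_A(i+1,b)\bigr) = \rk_{\pi_i(A)}(i,b),
\]
where the middle equality applies \cref{lemma:joinmeet} to $A = B_1 \vee \cdots \vee B_k$ in rows $i-1$ and $i+1$, and the outer equalities are the formula just established. Combined with the agreement outside row $i$, this gives $\rk_C = \rk_{\pi_i(A)}$, hence $C = \pi_i(A)$, as desired. The ideal-theoretic reformulation is then immediate from \cref{prop:idealsum}, which translates the join identity $\pi_i(A) = \pi_i(B_1)\vee\cdots\vee\pi_i(B_k)$ into the corresponding sum of ideals $I_{\pi_i(A)} = I_{\pi_i(B_1)}+\cdots+I_{\pi_i(B_k)}$.
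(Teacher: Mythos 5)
Your argument is correct, and it takes a genuinely different route from the paper. The paper proves the $k=2$ case and then inducts: setting $C=\pi_i(B_1)\vee\pi_i(B_2)$, it observes via \cref{lemma:joinmeet} that $\rk_C$ and $\rk_{\pi_i(A)}$ agree outside row $i$, so $\pi_i(C)=\pi_i(A)$, and then rules out $C\neq\pi_i(A)$ by combining \cref{lemma:joindescent} with \cref{lemma:descentessential} and idempotency of $\pi_i$ (a descent of $C$ at $i$ would force a descent of some $\pi_i(B_j)$ at $i$, which is impossible). You instead bypass the descent/essential-cell machinery entirely by proving the explicit formula $\rk_{\pi_i(X)}(i,b)=\min\bigl(\rk_X(i-1,b)+1,\rk_X(i+1,b)\bigr)$: the increment conditions of \cref{lemma:cornerincrease} show this is the pointwise upper bound on row $i$, and your case analysis (together with the fact that a pointwise minimum of two functions with increments in $\{0,1\}$ again has increments in $\{0,1\}$, and the boundary values $g(0)=0$, $g(n)=i$, which you should state explicitly) shows the bound is attained by a genuine corner sum function; since strong order reverses corner sums, the resulting ASM is the minimum of the sublattice of \cref{lemma:sublattice}, i.e.\ $\pi_i(X)$. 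The proposition then follows for all $k$ at once by commuting nested minima and applying \cref{lemma:joinmeet} to $A=B_1\vee\cdots\vee B_k$ in rows $i\pm1$, with the ideal-theoretic restatement coming from \cref{prop:idealsum} exactly as in the paper. What the paper's route buys is economy: it reuses lemmas already established for other purposes and never needs a closed-form description of $\pi_i$. What your route buys is that it is self-contained modulo \cref{lemma:joinmeet} and \cref{lemma:cornerincrease}, avoids induction on $k$, simultaneously re-proves well-definedness of $\pi_i$ (\cref{lemma:sublattice} becomes a corollary of the construction), and produces as a byproduct an explicit formula for $\rk_{\pi_i(X)}$ of the kind the paper defers to \cite{EKW}.
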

\begin{proof}
We proceed by induction on $k$.  If $k=1$ there is nothing to show.  Suppose $k=2$. For convenience, we write $C=\pi_i(B_1)\vee \pi_i(B_2)$.  It follows from \cref{lemma:joinmeet} that $\rk_{\pi_i(A)}$ agrees with $\rk_C$ outside of row $i$.  As such, $\pi_i(C)=\pi_i(A)$.  If $C\neq \pi_i(A)$, then \cref{lemma:joindescent} implies that $i$ is a descent of $\pi_i(B_1)$ or of $\pi_i(B_2)$, which contradicts \cref{lemma:descentessential}.  Thus, $C=\pi_i(A)$ as desired.

Now assume $k>2$.  Then because $A=(B_1\vee  \cdots \vee B_{k-1})  \vee B_k$, we apply the inductive hypothesis twice and obtain
\begin{align*}
    \pi_i(A)&=\pi_i(B_1\vee  \cdots \vee B_{k-1})  \vee \pi_i(B_k)\\
    &=\left(\pi_i(B_1)\vee \cdots \vee \pi_i(B_{k-1})\right)\vee\pi_i(B_k)\\
    &=\pi_i(B_1)\vee \cdots \vee \pi_i(B_k). 
\end{align*}
The final statement follows from \cref{prop:idealsum}.
\end{proof}

\begin{remark}\label{meetCounterexample}
    The statements corresponding to \cref{lemma:joindescent} and \cref{prop:joindescentpart3} pertaining to meets rather than joins are false.  For example, $A = \begin{pmatrix} 
0 & 1 & 0 & 0\\
1 & -1 & 1 & 0\\
0 & 1 & 0 & 0\\
0 & 0 & 0 & 1
    \end{pmatrix} = 2341 \wedge 3124
    $ has a descent at $2$ while neither $2341$ nor $3124$ does.  Then $2134 = \pi_2(A)  \neq \pi_2(2341) \wedge \pi_2(3124) = 2341 \wedge 3124$.
\end{remark}

\begin{theorem}
\label{cor:pi_iorderpreserving}
    Let $A,B\in \asm(n)$ so that $A\leq B$.  Let $i\in[n-1]$.  Then $\pi_i(A)\leq \pi_i(B)$.  Equivalently, if $I_A \subseteq I_B$, then $I_{\pi_i(A)} \subseteq I_{\pi_i(B)}$.
\end{theorem}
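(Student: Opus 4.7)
The plan is to exploit the fact that in a lattice, the relation $A \leq B$ is equivalent to the identity $A \vee B = B$, and then to apply the join-compatibility statement \cref{prop:joindescentpart3} that has just been established.

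Concretely, first I would observe that because $\asm(n)$ under strong order is a lattice (\cref{lemma:joinmeet}), the hypothesis $A \leq B$ is equivalent to $A \vee B = B$. This reduces the theorem to a one-line application of an already-proved result, so there is essentially no new combinatorial content to extract.

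Next, I would apply \cref{prop:joindescentpart3} with $k = 2$ to the join $A \vee B = B$. That proposition gives
\[
\pi_i(B) \;=\; \pi_i(A \vee B) \;=\; \pi_i(A) \vee \pi_i(B).
\]
By definition of the join, this identity forces $\pi_i(A) \leq \pi_i(B)$, which is the strong-order inequality we want. The equivalent ideal-theoretic statement $I_{\pi_i(A)} \subseteq I_{\pi_i(B)}$ is then immediate from \cref{proposition:asmorderfacts}.

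There is no real obstacle here: the work has already been absorbed into \cref{prop:joindescentpart3}, whose proof in turn relied on \cref{lemma:joindescent} (a descent of a join is a descent of one of the joinands) and \cref{lemma:descentessential} (descents are detected by essential cells in row $i$). One sanity check worth recording is that the analogous monotonicity statement for meets fails, as noted in \cref{meetCounterexample}, so the use of the join presentation $A \vee B = B$ (rather than $A \wedge B = A$) is essential.
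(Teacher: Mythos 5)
Your proposal is correct and is essentially identical to the paper's own proof: both reduce $A \leq B$ to $A \vee B = B$ and apply \cref{prop:joindescentpart3} with $k=2$ to conclude $\pi_i(A) \vee \pi_i(B) = \pi_i(B)$, hence $\pi_i(A) \leq \pi_i(B)$. The ideal-theoretic restatement via \cref{proposition:asmorderfacts} is likewise how the paper treats it.
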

\begin{proof}
    Because $A\leq B$,  $A\vee B=B$.  By \cref{prop:joindescentpart3}, we have $\pi_i(A)\vee \pi_i(B)=\pi_i(B)$.  This implies $\pi_i(A)\leq \pi_i(B)$.
\end{proof}

In the case that $A,B\in S_n$, \cref{cor:pi_iorderpreserving} follows by a routine application of the Lifting Property for permutations (\cite[Proposition 2.2.7]{BB05}).

\subsection{Components of $X_{\pi_i(A)}$}

 Our next goal is to write $I_{\pi_i(A)}$ as an intersection of Schubert determinantal ideals, allowing some potential redundancies.

\begin{lemma}\label{lem:permpiAascent}
    Let $A \in \asm(n)$ and $i \in [n-1]$.  If $v \in \perm(\pi_i(A))$, then $v$ has an ascent at $i$.
\end{lemma}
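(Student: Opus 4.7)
The plan is to argue by contradiction using the combination of two key facts already established in the paper: that $\pi_i$ preserves strong order (\cref{cor:pi_iorderpreserving}) and that $\pi_i$ is idempotent (mentioned right after the definition of weak order in \cref{subsect:weak-Bruhat-order-definitions}). Together, these should allow me to ``descend'' from a supposed $v \in \perm(\pi_i(A))$ with a descent at $i$ to a strictly smaller permutation still lying above $\pi_i(A)$, contradicting minimality.

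More concretely, I would suppose for contradiction that $v \in \perm(\pi_i(A))$ has a descent at $i$. Since $v$ is a permutation, having a descent at $i$ means $v(i) > v(i+1)$, and so $\pi_i(v) = v s_i$, which is again an element of $S_n$ with $\ell(\pi_i(v)) = \ell(v) - 1$. In particular, $\pi_i(v) < v$ in strong order. On the other hand, because $v \geq \pi_i(A)$ by the definition of $\perm(\pi_i(A))$, \cref{cor:pi_iorderpreserving} gives $\pi_i(v) \geq \pi_i(\pi_i(A))$, which equals $\pi_i(A)$ by idempotence. Hence $\pi_i(v) \in S_n$ satisfies $\pi_i(A) \leq \pi_i(v) < v$, contradicting the defining minimality property of $v \in \perm(\pi_i(A))$.

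I do not anticipate any serious obstacle here: the argument is really just a packaging of idempotence and the order-preserving property of $\pi_i$. The one subtle point worth checking carefully is that $\pi_i(v)$ genuinely lies in $S_n$ (so that it is a legitimate candidate to undercut $v$ in $\perm(\pi_i(A))$), but this is immediate from the explicit formula for $\pi_i$ on permutations recorded just before \cref{ex:first-pi_i-example}. Thus the entire argument should fit in a few lines.
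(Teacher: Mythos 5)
Your argument is correct and is essentially the paper's own proof: both apply \cref{cor:pi_iorderpreserving} to $\pi_i(A) \leq v$, use idempotence of $\pi_i$ to get $\pi_i(A) \leq \pi_i(v) < v$, and contradict the minimality in the definition of $\perm(\pi_i(A))$. Your extra check that $\pi_i(v) = vs_i \in S_n$ is a fine (if routine) addition.
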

\begin{proof}
Suppose that $v$ has a descent at $i$.  Then $\pi_i(v) <v$. Applying \cref{cor:pi_iorderpreserving} to $\pi_i(A)\le v$, we obtain $\pi_i(A) = \pi_i(\pi_i(A)) \leq \pi_i(v)$, and so $v \notin \perm(\pi_i(A))$, a contradiction.
\end{proof}

\begin{lemma}\label{lem:reversePiPermSet}
    If $\pi_i(A) \leq v$ and $v$ has an ascent at $i$, then $A \leq vs_i $.
\end{lemma}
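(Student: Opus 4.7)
The plan is to verify the rank inequality $\rk_A(a,b) \geq \rk_{vs_i}(a,b)$ for all $(a,b) \in [n] \times [n]$ and conclude via \cref{proposition:asmorderfacts}. The key observation is that each of the two pairs $(A, \pi_i(A))$ and $(vs_i, v)$ is related by modifications confined to row $i$: for the first pair, this is the definition of $\pi_i$; for the second, the convention $M_{vs_i} = M_{s_i}M_v$ makes $vs_i$ the matrix obtained from $v$ by swapping rows $i$ and $i+1$, so a direct count shows that $\rk_v$ and $\rk_{vs_i}$ agree in every row $a \neq i$.

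For rows $a \neq i$ the inequality is then immediate: chaining the two agreements with the hypothesis $\pi_i(A) \leq v$ gives
$$\rk_A(a,b) = \rk_{\pi_i(A)}(a,b) \geq \rk_v(a,b) = \rk_{vs_i}(a,b).$$

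Row $a = i$ is where I expect the real work, and it is the main obstacle. The inequality $\rk_A(i,b) \geq \rk_{vs_i}(i,b)$ cannot be read off directly from $\rk_{\pi_i(A)}(i,b) \geq \rk_v(i,b)$, because $\pi_i$ may strictly increase corner sums in row $i$, so in that row one only has $\rk_A(i,b) \leq \rk_{\pi_i(A)}(i,b)$ (the wrong direction). My plan is to route through the neighboring rows $i-1$ and $i+1$, where $A$ and $\pi_i(A)$ do agree, using the step-$\{0,1\}$ monotonicity of \cref{lemma:cornerincrease}(3). Using the ascent condition $v(i) < v(i+1)$ to expand $\rk_{vs_i}(i,b) = \rk_v(i-1,b) + [v(i+1) \leq b]$, I will split into two cases depending on whether $b < v(i+1)$ or $b \geq v(i+1)$. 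In the first case, $\rk_{vs_i}(i,b) = \rk_v(i-1,b)$ and the chain
$$\rk_A(i,b) \geq \rk_A(i-1,b) = \rk_{\pi_i(A)}(i-1,b) \geq \rk_v(i-1,b)$$
suffices. In the second case, $\rk_{vs_i}(i,b) = \rk_v(i,b) = \rk_v(i+1,b) - 1$, and the chain
$$\rk_A(i,b) \geq \rk_A(i+1,b) - 1 = \rk_{\pi_i(A)}(i+1,b) - 1 \geq \rk_v(i+1,b) - 1$$
closes the argument. Combining these cases yields $\rk_A \geq \rk_{vs_i}$ pointwise, hence $A \leq vs_i$.
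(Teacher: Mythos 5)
Your proof is correct; I checked each step. The rank identities you use are all valid: $\rk_A$ and $\rk_{\pi_i(A)}$ agree off row $i$ by the definition of $\pi_i$, $\rk_v$ and $\rk_{vs_i}$ agree off row $i$ since $M_{vs_i}$ is $M_v$ with rows $i$ and $i+1$ swapped, the expansion $\rk_{vs_i}(i,b)=\rk_v(i-1,b)+1$ exactly when $v(i+1)\leq b$ is right, and the two chains through rows $i-1$ and $i+1$ (using the unit-step monotonicity of corner sums from \cref{lemma:cornerincrease} and the pointwise inequality $\rk_{\pi_i(A)}\geq \rk_v$ from \cref{proposition:asmorderfacts}) close the row-$i$ case, including the boundary situations $i=1$ and $i=n-1$. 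However, your route is genuinely different from the paper's. The paper decomposes $A$ as a join of bigrassmannian permutations (\cref{lemma:asmbigrass}), uses the compatibility of $\pi_i$ with joins (\cref{prop:joindescentpart3}) to reduce the claim to each bigrassmannian factor $u_j$, and then invokes the classical Lifting Property for permutations to get $u_j\leq vs_i$; the whole argument stays at the level of poset structure and recycles machinery the paper has already built. Your argument instead works entirely with corner sum functions and proves, in effect, an ASM-level lifting-type statement directly: it is more elementary and self-contained, needing only \cref{proposition:asmorderfacts}, the definition of $\pi_i$, and \cref{lemma:cornerincrease}, and it bypasses the bigrassmannian decomposition and \cref{prop:joindescentpart3} altogether. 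It is also marginally more general, since you never use minimality of $\pi_i(A)$ --- only that some ASM agreeing with $A$ outside row $i$ lies weakly below $v$ --- whereas the paper's reduction leans on structural results it has independent need for elsewhere.
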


\begin{proof}
    Write $A=u_1 \vee  \cdots \vee  u_k$ for bigrassmannian permutations $u_j$, which we know we can do by \cref{lemma:asmbigrass}.  By \Cref{prop:joindescentpart3}, $\pi_i(A) = \pi_i(u_1) \vee \cdots \vee \pi_i(u_k)$.   Thus the assumption $\pi_i(A) \leq v$ implies $\pi_i(u_j) \leq v$ for all $j \in [k]$.  By the definition of join, $A \leq vs_i$ if and only if $u_j \leq vs_i$ for all $j \in [k]$, and so it suffices to show the latter.

    Fix $j \in[k]$.  Because $v$ has an ascent at $i$, $v<vs_i$.  Thus, if $u_j$ has an ascent at $i$, then $u_j = \pi_i(u_j) \leq v<vs_i$.    If $u_j$ has a descent at $i$, then we have $u_js_i = \pi_i(u_j)\leq v<vs_i$.  
Since $vs_i$ has a descent at $i$ and $u_js_i$ has an ascent at $i$, $u_j \leq v s_i$ by the Lifting Property \cite[Proposition 2.2.7]{BB05}.
\end{proof}

\begin{proposition}
\label{prop:intersectoperator}
    Let $A \in \asm(n)$.  Write $I_A = I_{w_1} \cap \cdots \cap I_{w_r}$ with $w_j \in S_n$. Then $I_{\pi_i(A)} = I_{\pi_i(w_1)} \cap \cdots \cap I_{\pi_i(w_r)}$.      In particular, $\perm(\pi_i(A))$ consists of the minimal elements of $\{\pi_i(w) : w \in \perm(A)\}$.
\end{proposition}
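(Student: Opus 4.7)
My approach is to establish the ``in particular'' set equality first and then use it to deduce the ideal equality. The setup observation is that $I_A \subseteq I_{w_j}$ for each $j$, so $A \leq w_j$ in strong order by \cref{proposition:asmorderfacts}, which means every $w_j$ dominates some element of $\perm(A)$. Moreover, each $I_{w_j}$ is prime, and if $P$ is a prime containing $\bigcap_j I_{w_j}$ then $P \supseteq I_{w_j}$ for some $j$; combining this with \cref{prop:I_A-intersection-of-schubs-in-perm} shows that the minimal primes of $I_A$, namely $\{I_v : v \in \perm(A)\}$, form a subset of $\{I_{w_j}\}$, so $\perm(A) \subseteq \{w_1, \ldots, w_r\}$.

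For the set equality $\perm(\pi_i(A)) = \min\{\pi_i(w) : w \in \perm(A)\}$, take $v \in \perm(\pi_i(A))$. By \cref{lem:permpiAascent}, $v$ has an ascent at $i$, so $vs_i$ has a descent at $i$ and $\pi_i(vs_i) = vs_i \cdot s_i = v$. \Cref{lem:reversePiPermSet} gives $A \leq vs_i$, so some $w \in \perm(A)$ satisfies $w \leq vs_i$, and \cref{cor:pi_iorderpreserving} yields $\pi_i(A) \leq \pi_i(w) \leq \pi_i(vs_i) = v$; minimality of $v$ among permutations above $\pi_i(A)$ then forces $v = \pi_i(w)$. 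Minimality of $v$ within the larger set $\{\pi_i(w) : w \in \perm(A)\}$ is then automatic, since that set consists of permutations above $\pi_i(A)$. For the reverse inclusion, a minimal element $u$ of $\{\pi_i(w) : w \in \perm(A)\}$ satisfies $u \geq \pi_i(A)$, hence dominates some $v \in \perm(\pi_i(A))$; by the first inclusion $v$ itself lies in $\{\pi_i(w) : w \in \perm(A)\}$, forcing $u = v$.

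With the set equality in hand, the ideal equality follows by a redundancy argument. First,
\[
I_{\pi_i(A)} = \bigcap_{u \in \perm(\pi_i(A))} I_u = \bigcap_{w \in \perm(A)} I_{\pi_i(w)},
\]
since any $\pi_i(w)$ that is not minimal in $\{\pi_i(w') : w' \in \perm(A)\}$ strictly dominates another element of the set, so $I_{\pi_i(w)}$ properly contains some $I_{\pi_i(w')}$ already in the intersection and is therefore redundant. Second, because $\perm(A) \subseteq \{w_1, \ldots, w_r\}$ and each $w_j$ dominates some $v \in \perm(A)$, \cref{cor:pi_iorderpreserving} and \cref{proposition:asmorderfacts} give $I_{\pi_i(v)} \subseteq I_{\pi_i(w_j)}$, making $I_{\pi_i(w_j)}$ redundant relative to $I_{\pi_i(v)}$, which is itself already in the intersection; hence $\bigcap_{j=1}^r I_{\pi_i(w_j)} = \bigcap_{w \in \perm(A)} I_{\pi_i(w)} = I_{\pi_i(A)}$.

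I expect the main obstacle to be producing, for each $v \in \perm(\pi_i(A))$, an explicit $w \in \perm(A)$ with $\pi_i(w) = v$. The key trick is to pass first to $vs_i$, invoking \cref{lem:permpiAascent} and \cref{lem:reversePiPermSet} to place $vs_i$ above $A$, and then to use $\pi_i(vs_i) = v$ to pull an element of $\perm(A)$ down to $v$ along $\pi_i$. Once this is in place, the remainder is routine manipulation of strong-order comparisons and ideal containments.
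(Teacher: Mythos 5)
Your proposal is correct and follows essentially the same route as the paper: the heart of both arguments is showing $\perm(\pi_i(A)) \subseteq \{\pi_i(u) : u \in \perm(A)\}$ by taking $v \in \perm(\pi_i(A))$, using \cref{lem:permpiAascent} and \cref{lem:reversePiPermSet} to get $A \leq vs_i$, and then pulling some $u \in \perm(A)$ down via $\pi_i(A) \leq \pi_i(u) \leq \pi_i(vs_i) = v$ and \cref{cor:pi_iorderpreserving}. The only difference is organizational: the paper gets the containment $I_{\pi_i(A)} \subseteq \bigcap_j I_{\pi_i(w_j)}$ directly from \cref{cor:pi_iorderpreserving}, whereas you route both directions through the set equality, primality of the $I_{w_j}$, and redundancy of non-minimal terms, which is a fine (if slightly longer) way to handle a possibly redundant presentation $I_A = I_{w_1} \cap \cdots \cap I_{w_r}$.
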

\begin{proof}
    Because $I_A \subseteq I_{w_i}$ for each $i \in [r]$, $I_{\pi_i(A)} \subseteq I_{\pi_i(w_1)} \cap \cdots \cap I_{\pi_i(w_r)}$ by \cref{cor:pi_iorderpreserving}.  For the other containment, because $I_{\pi_i(A)} = \bigcap_{w \in \perm(\pi_i(A))} I_w$, it suffices to show $\perm(\pi_i(A)) \subseteq \{\pi_i(u) : u \in \perm(A)\}$.  Fix $v \in \perm(\pi_i(A))$, in which case $\pi_i(A) \leq v$.  By \cref{lem:permpiAascent}, $v$ has an ascent at $i$, and so $A \leq vs_i$ by \cref{lem:reversePiPermSet}.  Hence, there exists some $u \in \perm(A)$ satisfying $A \leq u \leq vs_i$.
    
    Then $\pi_i(A) \leq \pi_i(u) \leq \pi_i(vs_i) = v$ by \cref{cor:pi_iorderpreserving}.  By the assumption $v \in \perm(\pi_i(A))$, $\pi_i(u) = v$, and so $v \in \{\pi_i(u) : u \in \perm(A)\}$.

     The final sentence follows from the previous together with \cref{prop:I_A-intersection-of-schubs-in-perm}.
\end{proof}

\begin{remark}
We note that a statement more general than \cref{prop:intersectoperator} also holds.  For $A \in \asm(n)$, if $I_A = \bigcap_{j \in [t]} I_{B_j}$, then $I_{\pi_i(A)} = \bigcap_{j \in [t]} I_{\pi_i(B_j)}$.  By expanding each $I_{B_j}$ as an intersection of Schubert determinantal ideals, this equality is an easy corollary of \cref{prop:intersectoperator}.  

 Returning to the setting of \cref{meetCounterexample} with $A = 2341 \wedge 3124$, we can see that \cref{prop:intersectoperator} does not extend to the setting of arbitrary meets.  Specifically, $I_{\pi_2(A)} = I_{2134} \neq I_{2341} \cap I_{3124} = I_{\pi_2(2341)} \cap I_{\pi_2(3124)}$.  In this sense, we understand \cref{prop:intersectoperator} to be an intrinsically algebro-geometric proposition about ASM varieties in their capacity as unions of matrix Schubert varieties and not a statement about meets in the lattice $\asm(n)$.
\end{remark}

\begin{remark}
    It is not in general true that $\perm(\pi_i(A)) = \{\pi_i(u) : u \in \perm(A)\}$ because some $I_{\pi_i(w_j)}$ may be redundant in the intersection even if $I_{w_j}$ is not.  For example, if \[
    A = \begin{pmatrix} 0 & 1 & 0\\
    1 & -1 & 1 \\
    0 & 1 & 0
    \end{pmatrix},
    \] then $I_A = I_{231} \cap I_{312}$.  Then $\pi_2(A) = 213 = \pi_2(231)$, and $\pi_2(312) = 312$.  Then $I_{\pi_2(A)} = I_{\pi_2(231)} \cap I_{\pi_2(312)} = I_{\pi_2(231)}$, so $\pi_2(312) \notin \perm(\pi_2(A))$.
\end{remark}

\begin{corollary}\label{cor:perm-set-elements-with-descent-stay-in-perm}
    If $w \in \perm(A)$ and $i$ is a descent of $w$, then $\pi_i(w) \in \perm(\pi_i(A))$.
\end{corollary}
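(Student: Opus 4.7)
The plan is to invoke the final sentence of \cref{prop:intersectoperator}, which identifies $\perm(\pi_i(A))$ with the set of minimal elements of $\{\pi_i(u) : u \in \perm(A)\}$. Since $w \in \perm(A)$, the element $\pi_i(w) = ws_i$ (using that $i$ is a descent of $w$) already lies in this set, so it suffices to show $\pi_i(w)$ is minimal there; equivalently, if $u \in \perm(A)$ and $\pi_i(u) \leq \pi_i(w) = ws_i$ in strong order, then $\pi_i(u) = \pi_i(w)$.

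I would split into two cases depending on whether $u$ has an ascent or a descent at $i$. In the ascent case, $\pi_i(u) = u$, so the hypothesis becomes $u \leq ws_i < w$. Since $u \geq A$ (as $u \in \perm(A)$) and $u < w$, this contradicts the minimality clause in the definition of $\perm(A)$ applied to $w$, so this case does not arise.

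In the descent case, $\pi_i(u) = us_i$, so the hypothesis reads $us_i \leq ws_i$. Because both $u$ and $w$ have descents at $i$, the Lifting Property for strong Bruhat order on $S_n$ (\cite[Proposition 2.2.7]{BB05}) upgrades $us_i \leq ws_i$ to $u \leq w$. Minimality of $w$ in $\perm(A)$ then forces $u = w$, whence $\pi_i(u) = \pi_i(w)$, as required.

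I do not expect a genuine obstacle here; the argument is essentially bookkeeping on top of two already-established ingredients, namely \cref{prop:intersectoperator} and the Lifting Property. The only place to be careful is verifying the Lifting Property's hypotheses, which amounts to noting explicitly that both $u$ and $w$ have a descent at $i$ before invoking it, and keeping straight the distinction between the strong order relation $\leq$ used inside \perm\ and the weak order relation $\preceq$ that the proposition ultimately concerns.
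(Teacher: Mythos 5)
Your proposal is correct and follows essentially the same route as the paper: both rest on the final sentence of \cref{prop:intersectoperator} and then verify minimality of $\pi_i(w)$ among $\{\pi_i(u):u\in\perm(A)\}$ by the same ascent/descent case split at $i$, using incomparability of $\perm(A)$ in the ascent case and the Lifting Property (applied, as you note, after observing that $us_i\le ws_i<w$ with $us_i$ having an ascent and $w$ a descent at $i$) in the descent case. The paper merely phrases this as a proof by contradiction, supposing some $\pi_i(w_j)<\pi_i(w)$, but the content is identical.
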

\begin{proof}
    Write $\perm(A) = \{w_1, \ldots, w_r\}$, and assume $w = w_r$.  By \cref{prop:intersectoperator}, $\pi_i(w_r) \in \perm(\pi_i(A))$ unless there exists $j \in [r-1]$ so that $\pi_i(w_j)<\pi_i(w_r)$.  Fix $j \in [r-1]$, and suppose for contradiction that $\pi_i(w_j)<\pi_i(w_r)$.  If $i$ is not a descent of $w_j$, then $w_j = \pi_i(w_j)<\pi_i(w_r)<w_r$.  If $w_j$ does have a descent at $i$, then $w_js_i = \pi_i(w_j) < \pi_i(w_r) = w_rs_i$.  Because both $w_j$ and $w_r$ have descents at $i$, both $w_js_i$ and $w_rs_i$ have ascents at $i$.  Hence $w_js_i < w_rs_i$ implies $w_j<w_r$.  In both cases, then, we have concluded that $w_j<w_r$, which contradicts the assumption $w_j, w_r \in \perm(A)$ because distinct elements of $\perm(A)$ must be incomparable.  
\end{proof}

\begin{corollary}
\label{cor:stepdowncodimbyatmost1}
Let $A\in \asm(n)$ and $i \in [n-1]$.  If $i$ is a descent of $w$ for some $w \in \perm(A)$ with $\ell(w) = \codim(X_A)$, then $\codim (X_{\pi_i(A)}) = \codim(X_A)-1$.  Otherwise, $\codim (X_{\pi_i(A)}) = \codim(X_A)$.  In particular, $\codim(X_A)-\codim(X_{\pi_i(A)}) \in \{0,1\}$.
\end{corollary}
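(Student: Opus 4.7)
The plan is to reduce the statement to a minimization over $\perm(A)$ using the codimension formula from \cref{prop:I_A-intersection-of-schubs-in-perm} together with the description of $\perm(\pi_i(A))$ from \cref{prop:intersectoperator}, and then carry out a short case analysis on whether a minimum-length element of $\perm(A)$ has a descent at $i$.

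First, I would record the identity
\[
\codim(X_{\pi_i(A)}) = \min\{\ell(v) : v \in \perm(\pi_i(A))\} = \min\{\ell(\pi_i(w)) : w \in \perm(A)\}.
\]
The first equality is \cref{prop:I_A-intersection-of-schubs-in-perm}. For the second, \cref{prop:intersectoperator} says $\perm(\pi_i(A))$ consists of the minimal elements of $\{\pi_i(w):w\in \perm(A)\}$ under strong order; since strong order on $S_n$ refines Coxeter length, the minimum length on this set is realized at some minimal element, and therefore coincides with the minimum over all of $\perm(\pi_i(A))$.

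Next, I would use the combinatorial description of $\pi_i$ on $S_n$ recorded just after \cref{def:pi_i}: $\pi_i(w) = w$ if $i$ is an ascent of $w$ and $\pi_i(w) = ws_i$ if $i$ is a descent of $w$. In particular $\ell(\pi_i(w)) = \ell(w) - \chi[i \in \des(w)]$, where $\chi$ is the indicator function. Substituting gives
\[
\codim(X_{\pi_i(A)}) \;=\; \min\{\ell(w) - \chi[i \in \des(w)] : w \in \perm(A)\}.
\]

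Finally, I would split into the two cases in the statement. If some $w \in \perm(A)$ with $\ell(w) = \codim(X_A)$ has a descent at $i$, then plugging this $w$ in gives $\codim(X_{\pi_i(A)}) \le \codim(X_A) - 1$; the reverse inequality is immediate because every term in the minimum is at least $\ell(w') - 1 \ge \codim(X_A) - 1$ for $w' \in \perm(A)$. If no minimum-length element of $\perm(A)$ has a descent at $i$, then any such $w$ contributes the value $\ell(w) = \codim(X_A)$, giving the upper bound $\codim(X_{\pi_i(A)}) \le \codim(X_A)$, while every other $w' \in \perm(A)$ has $\ell(w') \ge \codim(X_A) + 1$ and thus contributes at least $\codim(X_A)$. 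The final assertion is then immediate. The only subtlety is the first step, identifying the two minima, which hinges on the compatibility of $\pi_i$ with strong order established in \cref{cor:pi_iorderpreserving}; once one trusts \cref{prop:intersectoperator}, everything else is a bookkeeping exercise.
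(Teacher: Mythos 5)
Your proof is correct and follows essentially the same route as the paper: reduce to $\codim(X_{\pi_i(A)}) = \min\{\ell(\pi_i(w)) : w \in \perm(A)\}$ via \cref{prop:I_A-intersection-of-schubs-in-perm} and \cref{prop:intersectoperator}, then do the same two-case analysis on whether a minimum-length element of $\perm(A)$ has a descent at $i$. Your explicit justification that passing to the minimal elements of $\{\pi_i(w) : w \in \perm(A)\}$ does not change the minimum length (since strong order refines Coxeter length) is a small point the paper leaves implicit, and it is argued correctly.
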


\begin{proof}
For any $w\in S_n$, $\ell(w)-\ell(\pi_i(w))\in\{0,1\}$. By \cref{prop:intersectoperator}, $\perm(\pi_i(A))$ consists of the minimal elements of $\{\pi_i(w) : w \in \perm(A)\}$.  Then by \cref{prop:I_A-intersection-of-schubs-in-perm}, $\codim(X_{\pi_i(A)}) = \min\{\ell(\pi_i(w)) :  w \in \perm(A)\}$.  Thus $\codim(X_{\pi_i(A)}) \geq \codim(X_A)-1$.
 
 Suppose that there exists some $u \in \perm(A)$ with $\ell(u) = \codim(X_A)$ such that $i$ is a descent of $u$.  Then $\codim(X_{\pi_i(A)}) \leq \ell(\pi_i(u)) = \ell(u)-1 = \codim(X_A)-1$.  Hence $\codim(X_{\pi_i(A)})= \codim(X_A)-1$.

 Alternatively, suppose that $i$ is an ascent of $u$ for all $u \in \perm(A)$ satisfying $\ell(u) = \codim(X_A)$.  Then $\ell(\pi_i(u)) = \ell(u) = \codim(X_A)$ for all such $u$, and so $\codim(X_{\pi_i(A)}) \leq \codim(X_A)$.  If $v \in \perm(A)$ so that $\ell(v) \neq \codim(X_A)$, then $\ell(v)>\codim(X_A)$, and so $\ell(\pi_i(v)) \geq \codim(X_A)$.  Hence, $\min\{\ell(\pi_i(w)) :  w \in \perm(A)\} \geq \codim(X_A)$.  Thus $\codim(X_{\pi_i(A)}) = \codim(X_A)$.
\end{proof}

\begin{corollary}\label{cor:perm-set-of-pi_i(A)-when-codim-drops}  Let $A \in \asm(n)$ and $i \in [n-1]$.  If $\codim(X_{\pi_i(A)}) = \codim(X_A)-1$, then 
    \begin{align*}\{u\in \perm(\pi_i(A))&: \ell(u)=\codim(X_{\pi_i(A)})\}\\
    &= \{ws_i:w\in \perm(A),  \ell(w)=\codim(X_A), w>ws_i \}.\end{align*}
\end{corollary}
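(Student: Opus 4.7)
The plan is to prove both containments directly from the cited preceding results, chiefly \cref{prop:intersectoperator} (which realizes $\perm(\pi_i(A))$ as the minimal elements of $\{\pi_i(w) : w \in \perm(A)\}$), \cref{cor:perm-set-elements-with-descent-stay-in-perm} (which guarantees $\pi_i(w) \in \perm(\pi_i(A))$ whenever $w \in \perm(A)$ has a descent at $i$), and \cref{prop:I_A-intersection-of-schubs-in-perm} (which computes $\codim(X_A)$ as $\min_{w \in \perm(A)} \ell(w)$). The whole argument is a short case analysis driven by whether $i$ is an ascent or a descent of a relevant permutation.

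For the containment $(\supseteq)$, the plan is to start with $w \in \perm(A)$ satisfying $\ell(w) = \codim(X_A)$ and $w > ws_i$. The strict inequality says $i$ is a descent of $w$, so $\pi_i(w) = ws_i$. By \cref{cor:perm-set-elements-with-descent-stay-in-perm}, $ws_i = \pi_i(w) \in \perm(\pi_i(A))$, and its Coxeter length is $\ell(w) - 1 = \codim(X_A) - 1$, which equals $\codim(X_{\pi_i(A)})$ by hypothesis. This places $ws_i$ in the left-hand set.

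For the containment $(\subseteq)$, the plan is to take $u \in \perm(\pi_i(A))$ with $\ell(u) = \codim(X_{\pi_i(A)}) = \codim(X_A) - 1$, then invoke \cref{prop:intersectoperator} to obtain $w \in \perm(A)$ with $u = \pi_i(w)$. I would then split on the behavior of $w$ at $i$. If $i$ is an ascent of $w$, then $u = w$ and $\ell(w) = \codim(X_A) - 1 < \codim(X_A)$, contradicting $w \in \perm(A)$ via \cref{prop:I_A-intersection-of-schubs-in-perm}. Hence $i$ must be a descent of $w$, so $u = ws_i$ with $w > ws_i$, and $\ell(w) = \ell(u) + 1 = \codim(X_A)$. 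This exhibits $u$ as an element of the right-hand set, completing the argument.

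There is no real obstacle; the only subtlety to watch for is that the hypothesis $\codim(X_{\pi_i(A)}) = \codim(X_A) - 1$ (as distinguished from the other alternative in \cref{cor:stepdowncodimbyatmost1}) is precisely what rules out the ascent case in the second containment. If one tried to prove the analogous statement when the codimension does not drop, the ascent case would not yield a contradiction, and the stated equality of sets would fail.
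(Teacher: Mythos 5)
Your proposal is correct and follows essentially the same route as the paper, whose proof is simply to combine \cref{prop:I_A-intersection-of-schubs-in-perm}, \cref{prop:intersectoperator}, and \cref{cor:stepdowncodimbyatmost1} (with \cref{cor:perm-set-elements-with-descent-stay-in-perm} available for the containment you call $(\supseteq)$). You have merely written out explicitly the case analysis that the paper leaves to the reader.
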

\begin{proof}
    Combine \cref{prop:I_A-intersection-of-schubs-in-perm},  \cref{prop:intersectoperator}, and \cref{cor:stepdowncodimbyatmost1}.  
\end{proof}

\begin{corollary}\label{cor:descent-of-A-implies-descent-in-perm}
    Let $A \in \asm(n)$ and $i \in [n-1]$.  Then $i$ is a descent of $A$ if and only if there exists $w \in \perm(A)$ so that $i$ is a descent of $w$.
\end{corollary}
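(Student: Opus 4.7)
The plan is to prove each direction by contradiction, leveraging the compatibility of $\pi_i$ with strong order from \cref{cor:pi_iorderpreserving} together with the explicit description of $\perm(\pi_i(A))$ given in \cref{prop:intersectoperator}.

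For the $(\Leftarrow)$ direction, suppose $w\in \perm(A)$ has a descent at $i$, so that $\pi_i(w)=ws_i<w$. Assume for contradiction that $i$ is not a descent of $A$, which by the definition of weak order means $\pi_i(A)=A$. Applying \cref{cor:pi_iorderpreserving} to the strong order relation $A\leq w$ would then yield $A=\pi_i(A)\leq \pi_i(w)=ws_i$. Since $ws_i\in S_n$ and $ws_i<w$, this contradicts the minimality clause defining $w\in \perm(A)$.

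For the $(\Rightarrow)$ direction, suppose $i$ is a descent of $A$, and assume for contradiction that every $w\in \perm(A)$ has an ascent at $i$, so that $\pi_i(w)=w$ for all such $w$. Then $\{\pi_i(w):w\in \perm(A)\}=\perm(A)$. Because $\perm(A)$ consists of the strong-order-minimal permutations above $A$, it is an antichain in strong order, so every one of its elements is minimal in this set. By \cref{prop:intersectoperator}, $\perm(\pi_i(A))$ is exactly this set of minimal elements, and so $\perm(\pi_i(A))=\perm(A)$. Now \cref{prop:I_A-intersection-of-schubs-in-perm} gives
\[
I_{\pi_i(A)}=\bigcap_{w\in \perm(\pi_i(A))}I_w=\bigcap_{w\in \perm(A)}I_w=I_A,
\]
forcing $\pi_i(A)=A$ and contradicting the assumption that $i$ is a descent of $A$.

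Neither direction presents a genuine obstacle: the only subtle point is observing that $\perm(A)$ is automatically a strong-order antichain, so that the \emph{minimal elements} clause in \cref{prop:intersectoperator} simplifies cleanly when $\pi_i$ acts as the identity on $\perm(A)$. The work of packaging descents of $A$ in terms of descents in $\perm(A)$ has essentially been done in the prior results of the section.
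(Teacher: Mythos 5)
Your proof is correct, and no step fails. The forward direction is essentially the paper's argument in slightly different packaging: the paper applies \cref{prop:intersectoperator} directly to write $I_{\pi_i(A)} = \bigcap_{w \in \perm(A)} I_{\pi_i(w)} = \bigcap_{w \in \perm(A)} I_w = I_A$, whereas you route through the final sentence of \cref{prop:intersectoperator} (the description of $\perm(\pi_i(A))$ as minimal elements) together with \cref{prop:I_A-intersection-of-schubs-in-perm}; your observation that $\perm(A)$ is an antichain, so the minimality clause is vacuous when $\pi_i$ fixes every element, is exactly what makes this work. The backward direction is where you genuinely diverge: the paper invokes \cref{cor:perm-set-elements-with-descent-stay-in-perm} to conclude $\pi_i(w) \in \perm(\pi_i(A))$ and then uses incomparability of $\perm(A)$ to see $\perm(\pi_i(A)) \neq \perm(A)$, while you argue directly from \cref{cor:pi_iorderpreserving}: if $\pi_i(A) = A$, then $A = \pi_i(A) \leq \pi_i(w) = ws_i < w$, violating the minimality built into the definition of $\perm(A)$. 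Your route is slightly more economical (it bypasses \cref{cor:perm-set-elements-with-descent-stay-in-perm} entirely and mirrors the monotonicity trick used in \cref{lem:permpiAascent}), while the paper's version records the stronger byproduct that $\pi_i(w)$ actually lies in $\perm(\pi_i(A))$. Both directions rely only on results proved earlier in the section, so there is no circularity.
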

\begin{proof}
    Suppose that $i$ is an ascent of $w$ for all $w \in \perm(A)$.  Then by \cref{prop:I_A-intersection-of-schubs-in-perm} and \cref{prop:intersectoperator}, \[
    I_{\pi_i(A)} = \bigcap_{w \in \perm(A)} I_{\pi_i(w)} = \bigcap_{w \in \perm(A)} I_w = I_A,
    \] which is to say that $i$ is an ascent of $A$.

    Conversely, suppose that $i$ is a descent of $w$ for some $w \in \perm(A)$.  By \cref{cor:perm-set-elements-with-descent-stay-in-perm}, $\pi_i(w) \in \perm(\pi_i(A))$.  Because $\pi_i(w)<w \in \perm(A)$ and elements of $\perm(A)$ are incomparable, $\pi_i(w) \notin \perm(A)$.  Hence, $A \neq \pi_i(A)$, and so $i$ is a descent of $A$.
\end{proof}

\begin{corollary}\label{cor:some-pi-drops-codim}
    Let $A\in \asm(n)$, and assume that $A$ is not the identity element of $S_n$.  Then there exists $i\in [n-1]$ so that $\codim(X_{\pi_i(A)}) = \codim(X_A) -1$.
\end{corollary}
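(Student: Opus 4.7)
The proof should be a short consequence of \cref{prop:I_A-intersection-of-schubs-in-perm} together with \cref{cor:stepdowncodimbyatmost1}. The plan is to select a minimum-length representative of $\perm(A)$, observe that it must have a descent because it is not the identity, and then quote \cref{cor:stepdowncodimbyatmost1} to conclude that $\pi_i$ at that descent drops the codimension by one.

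More precisely, first I would observe that the identity permutation is the unique minimum element of $\asm(n)$ under strong order. Indeed, for any $B \in \asm(n)$ and any $(i,j) \in [n]^2$, the third condition of \cref{lemma:cornerincrease} yields $\rk_B(i,j) \leq \min(i,j) = \rk_{\mathrm{id}}(i,j)$, so $\mathrm{id} \leq B$ by the rank-function characterization of strong order in \cref{proposition:asmorderfacts}. Consequently, any $A \neq \mathrm{id}$ satisfies $A > \mathrm{id}$ strictly.

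Next, by \cref{prop:I_A-intersection-of-schubs-in-perm}, $\perm(A)$ is nonempty and there exists $w \in \perm(A)$ with $\ell(w) = \codim(X_A)$. Since such a $w$ satisfies $w \geq A > \mathrm{id}$, we have $w \neq \mathrm{id}$, hence $\ell(w) \geq 1$ and $w$ must admit a descent at some $i \in [n-1]$.

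Finally, I would apply \cref{cor:stepdowncodimbyatmost1} directly: because there exists $w \in \perm(A)$ with $\ell(w) = \codim(X_A)$ having a descent at $i$, that corollary gives $\codim(X_{\pi_i(A)}) = \codim(X_A)-1$, as desired. There is no real obstacle here beyond confirming that $A \neq \mathrm{id}$ does indeed force every element of $\perm(A)$ to be non-identity, which is immediate from the fact that $\mathrm{id}$ is the unique bottom element of strong order on $\asm(n)$.
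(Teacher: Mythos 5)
Your proposal is correct and follows essentially the same route as the paper: choose $w \in \perm(A)$ with $\ell(w) = \codim(X_A)$, note it cannot be the identity (the paper leaves implicit, and you verify, that the identity is the bottom element of strong order so $w \geq A > \mathrm{id}$), pick a descent $i$ of $w$, and invoke \cref{cor:stepdowncodimbyatmost1}. The only difference is that you spell out the rank-function justification that $\mathrm{id}$ is the unique minimum, which the paper takes for granted.
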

\begin{proof}
Because $A$ is not the identity element of $S_n$, there exists some non-identity $w \in \perm(A)$ satisfying $\ell(w) = \codim(X_A)$.  Because $w$ is not the identity, $w$ has at least one descent $i \in [n-1]$. By  \cref{cor:stepdowncodimbyatmost1}, $\codim(X_{\pi_i(A)}) = \codim(X_A) -1$.
\end{proof}

\subsection{Codimension and components via weak order chains}

Let $A \in \asm(n)$, and let $e$ denote the identity element of $S_n$.  Given a saturated weak order chain $e=A_0\prec A_1 \prec \cdots \prec A_{k}=A$, we may encode the chain by the tuple $(a_1,\ldots,a_k) \in [n-1]^k$ where $A_{i-1}=\pi_{a_{i}}(A_{i})$ for all $i\in [k]$.
Let \newword{chains$(A)$} denote the set of all such tuples representing saturated chains from the identity to $A$ in weak order.

\begin{theorem}
\label{cor:codimension-saturated-chains}
(1)    Let $A \in \asm(n)$. Then $(a_1, \ldots, a_k) \in \chains(A)$ if and only if a substring of $(a_1, \ldots, a_k)$ forms a reduced word for some $w \in \perm(A)$, $\pi_{a_k}(A) \neq A$, and no $\pi_{a_j}$ acts trivially on $\pi_{a_{j+1}} \circ \cdots \circ \pi_{a_k}(A)$ for $j \in [k-1]$.

(2) If $(a_1, \ldots, a_k)$ forms a reduced word for some $w \in \perm(A)$, then $(a_1, \ldots, a_k) \in \chains(A)$.  In particular, \[
    \perm(A)\subseteq \{s_{a_1}s_{a_2}\cdots s_{a_k}:(a_1,\ldots,a_k)\in\chains(A)\}.
    \] 
    
(3) $\codim(X_A)$ is equal to the minimum length of a saturated chain in the weak order poset from the identity permutation $e$ to $A$.
\end{theorem}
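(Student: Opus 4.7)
The three parts can be proved in the order (3), (2), (1), leaning on the codimension and perm-set analysis already developed for $\pi_i$.

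\textbf{Part (3).} The lower bound is immediate from \cref{cor:stepdowncodimbyatmost1}: along any saturated chain $e = A_0 \prec A_1 \prec \cdots \prec A_k = A$ with $A_{i-1} = \pi_{a_i}(A_i)$, the quantity $\codim(X_{A_i}) - \codim(X_{A_{i-1}})$ lies in $\{0,1\}$, so telescoping forces $k \geq \codim(X_A)$. For the matching upper bound, I iterate \cref{cor:some-pi-drops-codim} starting from $A$: as long as we are not at $e$, we pick some $i$ for which $\pi_i$ drops the codimension by exactly one, producing after $\codim(X_A)$ steps a saturated chain from $e$ to $A$ of the minimum length.

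\textbf{Part (2).} I induct on $k = \ell(w)$. The base case $k = 0$ forces $w = e$ and $A = e$, whose only chain is empty. For the step, fix a reduced word $(a_1, \ldots, a_k)$ for $w \in \perm(A)$; then $a_k$ is a descent of $w$. I first rule out $\pi_{a_k}(A) = A$: if it held, \cref{prop:intersectoperator} would force $ws_{a_k} \in \perm(\pi_{a_k}(A)) = \perm(A)$, contradicting the minimality of $w$ above $A$. Then \cref{cor:perm-set-elements-with-descent-stay-in-perm} gives $ws_{a_k} \in \perm(\pi_{a_k}(A))$, while $(a_1, \ldots, a_{k-1})$ is a reduced word for $ws_{a_k}$, so the inductive hypothesis produces a saturated chain from $e$ to $\pi_{a_k}(A)$ with labels $(a_1, \ldots, a_{k-1})$, which extends by one cover to $A$. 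The inclusion for $\perm(A)$ then follows by choosing any reduced word for each $w \in \perm(A)$.

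\textbf{Part (1).} The only-if direction mirrors Part (2) inductively: truncating $(a_1, \ldots, a_k) \in \chains(A)$ yields $(a_1, \ldots, a_{k-1}) \in \chains(\pi_{a_k}(A))$, whose substring-reduced-word $(a_{j_1},\ldots,a_{j_m})$ for some $w' \in \perm(\pi_{a_k}(A))$ is supplied by induction; \cref{prop:intersectoperator} writes $w' = \pi_{a_k}(u)$ for some $u \in \perm(A)$, and depending on whether $a_k$ is an ascent or descent of $u$ we either keep the substring (when $w' = u$) or append $a_k$ to it (when $u = w's_{a_k}$, in which case length counts confirm the extension is reduced). For the if-direction, set $A_i := \pi_{a_{i+1}} \circ \cdots \circ \pi_{a_k}(A)$; the non-triviality hypotheses directly yield a saturated chain, and the remaining step is to prove $A_0 = e$. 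Since $A \leq w$, applying \cref{cor:pi_iorderpreserving} repeatedly gives $A_0 \leq \pi_{a_1} \circ \cdots \circ \pi_{a_k}(w)$, and I finish by showing the latter equals $e$ via a subword-extension principle: adjoining extra $\pi_i$'s to a composition that already sends $w$ to $e$ yields something $\leq e = \min \asm(n)$.

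The main obstacle is that subword-extension claim at the end of Part (1)'s if-direction. It reduces to the two facts $\pi_i(v) \leq v$ (the operators never increase strong order) and $v \leq v' \Rightarrow \pi_i(v) \leq \pi_i(v')$ from \cref{cor:pi_iorderpreserving}, but must be executed carefully because the extra operators can be inserted at any position of the composition — not just at one end — so the monotonicity must be propagated through each subsequent application of $\pi_i$ before comparing with the subword result.
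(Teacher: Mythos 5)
Your proposal is correct, but it is organized quite differently from the paper's proof. The paper proves Part (1) first and in one stroke: writing $\pi=\pi_{a_1}\circ\cdots\circ\pi_{a_k}$, it iterates \cref{prop:intersectoperator} to obtain $I_{\pi(A)}=\bigcap_{w\in\perm(A)}I_{\pi(w)}$, so that $\pi(A)=e$ if and only if $\pi(w)=e$ for some $w\in\perm(A)$, and then reduces to the permutation-level fact that $\pi(w)=e$ exactly when the word contains a reduced word for $w$ as a substring; Part (2) is then deduced from Part (1) via \cref{cor:perm-set-elements-with-descent-stay-in-perm} and \cref{cor:descent-of-A-implies-descent-in-perm}, and Part (3) from Parts (1)--(2) together with \cref{prop:I_A-intersection-of-schubs-in-perm}. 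You instead prove Part (3) directly and independently from the codimension-step results (\cref{cor:stepdowncodimbyatmost1}, \cref{cor:some-pi-drops-codim}), prove Part (2) by induction on $\ell(w)$, and prove Part (1) by peeling off one operator at a time, supplemented by your subword-monotonicity lemma (inserting extra $\pi_i$'s into a composition can only decrease the output in strong order), which does follow, by the two-case induction you indicate, from $\pi_i(B)\le B$ (immediate from \cref{def:pi_i}) and \cref{cor:pi_iorderpreserving}; since $e$ is the strong-order minimum of $\asm(n)$, this closes the if-direction, so the step you flag as the main obstacle is not a gap. What your route buys is a self-contained proof of (3) and an explicit treatment of the monotonicity input that the paper's terse permutation-level claim (whose ``if'' half amounts to the subword property of strong order) leaves implicit; what the paper's route buys is brevity, since the single identity $I_{\pi(A)}=\bigcap_{w\in\perm(A)}I_{\pi(w)}$ handles both directions of (1) at the level of permutations. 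One small citation fix: to rule out $\pi_{a_k}(A)=A$ in Part (2), the statement you want is \cref{cor:perm-set-elements-with-descent-stay-in-perm} (or, in one line, \cref{cor:descent-of-A-implies-descent-in-perm}), rather than \cref{prop:intersectoperator} itself.
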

\begin{proof}
(1) Fix a string $s = (s_{a_1}, \ldots, s_{a_k})$ of simple transpositions, and let $\pi = \pi_{a_1} \circ \cdots \circ \pi_{a_k}$.  Then a substring of $s$ represents a chain from $e$ to $A$ in the weak order poset if and only if $\pi(A) = e$, $\pi_k(A) \neq A$, and no $a_j$ acts trivially on $\pi_{a_{j+1}} \circ \cdots \circ \pi_{a_k}(A)$ for $j \in [k-1]$.  Note that $\pi(A) = e$ if and only if $I_{\pi(A)} = (0)$.

Write $I_A = \bigcap_{w \in \perm(A)} I_w$ by \cref{prop:I_A-intersection-of-schubs-in-perm}.  By \cref{prop:intersectoperator}, $I_{\pi(A)} = \bigcap_{w \in \perm(A)} I_{\pi(w)}$.  Hence, $I_{\pi(A)} = 0$ if and only if $I_{\pi(w)} = 0$ for some $w \in \perm(A)$ if and only if $\pi(w) = e$ for some $w \in \perm(A)$. For $w \in S_n$, we claim that $\pi(w) = e$ if and only if $s = (s_{a_1}, \ldots, s_{a_k})$ contains as a substring a reduced expression for $w$.  By considering the substring of $s$ whose corresponding operator $\pi_{a_j}$ acts nontrivially on $\pi_{a_{j+1}} \circ \cdots \circ \pi_{a_k}(w)$ (including $s_{a_k}$ if and only if $\pi_{a_k}(w) \neq w$), the result follows from the equality $\pi_i(w) = ws_i$ whenever $\pi_i$ acts nontrivially on $w$.

(2) If $(a_1, \ldots, a_k)$ itself forms a reduced word for some $w \in \perm(A)$, we claim that $(a_1, \ldots, a_k) \in \chains(A)$.  From Part (1), it suffices to show that $a_k$ is a descent of $A$ and that $a_j$ is a descent of $\pi_{j+1} \circ \cdots \circ \pi_k(A)$ for all $j \in [k-1]$.  This follows from \cref{cor:perm-set-elements-with-descent-stay-in-perm} and \cref{cor:descent-of-A-implies-descent-in-perm}.

(3) This follows from \cref{prop:I_A-intersection-of-schubs-in-perm} together with Parts (1) and (2).
\end{proof}

\begin{example}
    Let $A=\begin{pmatrix} 
    0 & 0 & 1 & 0\\
    1 & 0  & -1 & 1 \\
    0 & 1 & 0 & 0\\
    0 & 0 & 1 & 0
    \end{pmatrix}$. 
    We have that $\perm(A)=\{4123,3412\}$.  Furthermore, $\ell(4123)=3$ and $\ell(3412)=4$.  Thus, $\codim(X_A)=3$ by \cref{prop:I_A-intersection-of-schubs-in-perm}.  We can also see $\codim(X_A)=3$ using weak order chains.  Pictured below is the order ideal of elements below $A$ in weak order.
	\begin{center}
	\scalebox{0.72}{
\begin{tikzpicture}
	\node (A) at (0,0) {
    $\begin{pmatrix} 
    0 & 0 & 1 & 0\\
    1 & 0  & -1 & 1 \\
    0 & 1 & 0 & 0\\
    0 & 0 & 1 & 0
    \end{pmatrix}$};
	\node (B) at (-4,-2) {
 $\begin{pmatrix} 
    1 & 0 & 0 & 0\\
    0 & 0  & 0 & 1 \\
    0 & 1 & 0 & 0\\
    0 & 0 & 1 & 0
    \end{pmatrix}$
    };
    \node (C) at (4,-2) {
 $\begin{pmatrix} 
    0 & 0 & 1 & 0\\
    1 & 0  & 0 & 0 \\
    0 & 1 & -1 & 1\\
    0 & 0 & 1 & 0
    \end{pmatrix}$
    };
	\node (D) at (0,-4) {
 $\begin{pmatrix} 
    1 & 0 & 0 & 0\\
    0 & 0  & 1 & 0 \\
    0 & 1 & -1 & 1\\
    0 & 0 & 1 & 0
    \end{pmatrix}$
    };
 \node (E) at (8,-4) {
 $\begin{pmatrix} 
    0 & 0 & 1 & 0\\
    1 & 0  & 0 & 0 \\
    0 & 1 & 0 & 0\\
    0 & 0 & 0 & 1
    \end{pmatrix}$
 };
 \node (F) at (-4,-6) {
 $\begin{pmatrix} 
    1 & 0 & 0 & 0\\
    0 & 1  & 0 & 0 \\
    0 & 0 & 0 & 1\\
    0 & 0 & 1 & 0
    \end{pmatrix}$
 };
 \node (G) at (4,-6) {
 $\begin{pmatrix} 
    1 & 0 & 0 & 0\\
    0 & 0  & 1 & 0 \\
    0 & 1 & 0 & 0\\
    0 & 0 & 0 & 1
    \end{pmatrix}$
 };
 \node (H) at (0,-8) {
 $\begin{pmatrix} 
    1 & 0 & 0 & 0\\
    0 & 1  & 0 & 0 \\
    0 & 0 & 1 & 0\\
    0 & 0 & 0 & 1
    \end{pmatrix}$
 };
	\draw[->,thick] (A) -- node[above]{$\pi_1$} (B) ;
    \draw[->,thick] (A) -- node[above]{$\pi_2$} (C) ;
    \draw[->,thick] (B) -- node[left]{$\pi_2$} (F) ;
    \draw[->,thick] (C) -- node[above]{$\pi_1$} (D) ;
    \draw[->,thick] (C) -- node[above]{$\pi_3$} (E) ;
    \draw[->,thick] (D) -- node[above]{$\pi_2$} (F) ;
    \draw[->,thick] (D) -- node[above]{$\pi_3$} (G) ;
    \draw[->,thick] (E) -- node[above]{$\pi_1$} (G) ;
    \draw[->,thick] (F) -- node[above]{$\pi_3$} (H) ;
    \draw[->,thick] (G) -- node[above]{$\pi_2$} (H) ;
\end{tikzpicture}}
	\end{center}
	The shortest element of $\chains(A)$ is $(3,2,1)$, which is of length $3$.  We also have $3=\codim(X_A)$, as predicted by \cref{cor:codimension-saturated-chains}.  
    
    We see in this example that there may be elements of $\chains(A)$ that are not reduced words for any elements of $\perm(A)$.  For example, $(3,2,1,2) \in \chains(A)$, but $s_3s_2s_1s_2 = 4213 \notin \perm(A)$.  However, $(3,2,1)$ is also an element of $\chains(A)$, and $s_3s_2s_1 = 4123 \in \perm(A)$.  The issue is that $\pi_2 \circ \pi_1 \circ \pi_2(A) = \pi_2 \circ \pi_1(A)$, a possibly unintuitive relation.  We use the next proposition to give some information about the chains in the weak order poset that do not come from elements of $\perm(A)$.
\end{example}

Given a reduced word $(a_1,\ldots,a_k)$ for $w$, define $\pi_w=\pi_{a_1}\circ \cdots \circ \pi_{a_k}$.  By \cite[Proposition 3.2]{HR20}, $\pi_w$ is independent of the choice of reduced word.


\begin{proposition}\label{prop:reduced-words-and-saturated-chains}
    Let $A\in \asm(n)$.   
    \begin{enumerate}
    \item If $(a_1,\ldots,a_k)\in\chains(A)$, then it is a reduced word. 
        \item If $w=s_{a_1}s_{a_2}\cdots s_{a_k}$ for some $(a_1,\ldots,a_k)\in \chains(A)$ then $w\geq v$ for some $v\in \perm(A)$.
    \end{enumerate}
\end{proposition}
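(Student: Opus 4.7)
The plan is to argue both parts by exploiting the $0$-Hecke structure of $\{\pi_i\}$ on $\asm(n)$ (idempotency, commutation, and braid, as recalled in \cref{subsect:weak-Bruhat-order-definitions}). The main bookkeeping device is the Demazure product: for the given chain I would define $u_{k+1}:=e$ and, for $1\le j\le k$, set $u_j := s_{a_j}u_{j+1}$ when $\ell(s_{a_j} u_{j+1})=\ell(u_{j+1})+1$ and $u_j := u_{j+1}$ otherwise. A short case analysis using idempotency and the ability to move $s_{a_j}$ to the left of a reduced expression for $u_{j+1}$ when $s_{a_j} u_{j+1}<u_{j+1}$ shows that $\pi_{a_j}\circ\pi_{u_{j+1}}=\pi_{u_j}$; iterating, $\pi_{a_1}\circ\cdots\circ\pi_{a_k}=\pi_{u_1}$, and more generally $A_{j-1}=\pi_{u_j}(A)$ for all $j$.

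Part (1) then follows from a descending induction on $j$. At each step the cover condition $A_{j-1}\neq A_j$ forces $\pi_{u_j}(A)\neq\pi_{u_{j+1}}(A)$, hence $u_j\neq u_{j+1}$; by construction this is precisely the case where $\ell(u_j)=\ell(u_{j+1})+1$ and $u_j=s_{a_j}u_{j+1}$ in the ordinary product. Starting from $\ell(u_{k+1})=0$ and incrementing $k$ times yields $\ell(u_1)=k$, which is exactly the statement that $(a_1,\ldots,a_k)$ is a reduced word for $u_1 = s_{a_1}\cdots s_{a_k}$.

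For Part (2), set $w=s_{a_1}\cdots s_{a_k}$. Part (1) gives $w=u_1$, and the chain definition gives $\pi_w(A)=A_0=e$. Iterating \cref{prop:intersectoperator} once for each factor in the reduced expression for $w$ yields $I_{\pi_w(A)}=\bigcap_{v\in\perm(A)}I_{\pi_w(v)}$. Since $\pi_w(A)=e$ the left side is $(0)$, and because $S$ is a domain (so a finite intersection of nonzero ideals is nonzero), some $I_{\pi_w(v)}=(0)$, i.e., $\pi_w(v)=e$ for some $v\in\perm(A)$. To conclude $w\ge v$ I would invoke the classical identity $\pi_w(v)=e\iff v\le w$ for $v,w\in S_n$, proved by a quick induction on $\ell(w)$ using the subword characterization of strong order on $S_n$. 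The main technical care is in the first paragraph, in verifying that the Demazure-product identity for compositions of $\pi_i$ really holds as an equality of operators on $\asm(n)$; once that is in place, each of the two claimed statements is a short deduction, with the cover condition at every step doing all the work in Part (1) and \cref{prop:intersectoperator} plus the $0$-Hecke fact about permutations doing all the work in Part (2).
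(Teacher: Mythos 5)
Your proposal is correct, and it runs on the same engines as the paper's proof: reduced-word independence of $\pi_w$ together with idempotency (the facts recalled in \cref{subsect:weak-Bruhat-order-definitions} and just before the proposition) for Part (1), and \cref{prop:intersectoperator} plus the $0$-Hecke fact that $\pi_w(v)=e$ forces $v\le w$ for Part (2); the difference is in the packaging. For Part (1), the paper argues by contradiction: it picks the maximal index $j$ at which the suffix fails to be reduced, rewrites the reduced suffix $v=s_{a_{j+1}}\cdots s_{a_k}$ as a reduced expression beginning with $s_{a_j}$, and uses idempotency to conclude that $\pi_{a_j}$ fixes $A_j$, violating strictness of the cover. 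Your Demazure-product bookkeeping ($u_j$ the $0$-Hecke product of the suffix, with $A_{j-1}=\pi_{u_j}(A)$) converts this into a direct induction in which strictness of each cover forces every step to be an honest length-increasing multiplication; this is a clean, positive rephrasing of the same idea and avoids the maximal-index choice. For Part (2), the paper simply cites \cref{cor:codimension-saturated-chains} (some substring of the chain word is a reduced word for some $v\in\perm(A)$, whence $v\le w$ by the subword property of strong order), whereas you inline precisely the content of that theorem's proof: iterate \cref{prop:intersectoperator} along the word, use that a finite intersection of nonzero ideals in the domain $S$ is nonzero to find $v\in\perm(A)$ with $\pi_w(v)=e$, and finish with the $0$-Hecke subword fact. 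That is valid and self-contained (note, as you implicitly use, that \cref{prop:intersectoperator} is stated for possibly redundant presentations $I_B=I_{w_1}\cap\cdots\cap I_{w_r}$, which is needed since the images $\pi_{a_k}(v)$ need not form an antichain), but it buys nothing over the paper's one-line citation; the only genuine novelty in your writeup is the tidier Demazure-product formulation of Part (1).
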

\begin{proof}

\noindent (1) Write $A_k = A$ and $A_{i-1} = \pi_{a_i}(A_i)$ for each $i \in [k]$.  If $(a_1,\ldots,a_k)$ is not a reduced word, then there exists some index $j$ so that 
\[s_{a_j} s_{a_{j+1}} \cdots s_{a_k}<s_{a_{j+1}} s_{a_{j+2}} \cdots s_{a_k}.\]
Suppose that $j$ has been chosen maximally, in which case $s_{a_{j+1}} s_{a_{j+2}} \cdots s_{a_k}$ is a reduced expression. Write $v = s_{a_{j+1}} s_{a_{j+2}} \cdots s_{a_k}$.  The inequality $s_{a_j}v<v$ implies that we may also write $v = s_{a_j}s_{a'_{j+2}}\cdots s_{a'_k}$ for some reduced word $(a_j,a'_{j+2}, \ldots, a'_k)$.  Write $A'_k = A$ and $A'_{i-1} = \pi_{a'_i}(A'_i)$ for $i \in [j+2,k]$.  Then $A_j = \pi_v(A) = \pi_{a_j}(A'_{j+1})$.  But the action of any $\pi_i$ is idempotent, and so $\pi_{a_j}(A_j) = \pi_{a_j}(\pi_{a_j}(A'_{j+1})) = \pi_{a_j}(A'_{j+1}) = A_j$, in violation of the definition of chain.

\noindent (2) By Part (1), $(a_1, \ldots, a_k)$ is a reduced word for $w$.  By \cref{cor:codimension-saturated-chains}, some substring of $(a_1, \ldots, a_k)$ is a reduced word for some $v \in \perm(A)$.  Hence $w \geq v$.
\end{proof}

\begin{proposition}\label{prop:saturated-chains-same-length}
    Let $A\in \asm(n)$.  The weak order saturated chains from the identity to $A$ are all of the same length if and only if $X_B$ is equidimensional for all $B\preceq A$.
\end{proposition}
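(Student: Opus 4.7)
The plan is to prove both directions using \cref{cor:codimension-saturated-chains} and \cref{cor:stepdowncodimbyatmost1}, together with the observation that equidimensionality of $X_B$ amounts to the statement that every $w \in \perm(B)$ satisfies $\ell(w) = \codim(X_B)$.

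For the direction ($\Leftarrow$), assume $X_B$ is equidimensional for every $B\preceq A$. Fix a saturated weak order chain $e = A_0 \prec A_1 \prec \cdots \prec A_k = A$, encoded by $(a_1,\ldots,a_k)$, and let me show $k = \codim(X_A)$. By \cref{cor:stepdowncodimbyatmost1}, each step drops codimension by $0$ or $1$, so I only need to rule out drops of $0$. Since $A_{i-1} = \pi_{a_i}(A_i) \neq A_i$, the index $a_i$ is a descent of $A_i$, so by \cref{cor:descent-of-A-implies-descent-in-perm}, some $w \in \perm(A_i)$ has a descent at $a_i$. Because $A_i \preceq A$, the equidimensionality hypothesis applied to $X_{A_i}$ forces $\ell(w) = \codim(X_{A_i})$. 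Then \cref{cor:stepdowncodimbyatmost1} gives $\codim(X_{A_{i-1}}) = \codim(X_{A_i}) - 1$. Summing, $k = \codim(X_A)$, so every saturated chain has this common length.

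For the direction ($\Rightarrow$), assume all saturated chains from $e$ to $A$ share a common length. First I would upgrade this to: for every $B \preceq A$, all saturated chains from $e$ to $B$ have a common length. Indeed, fix any saturated chain from $B$ to $A$ of length $m$; prepending any two saturated chains $c_1, c_2$ from $e$ to $B$ yields saturated chains from $e$ to $A$, whose lengths differ by $\ell(c_1) - \ell(c_2)$, which must therefore vanish. Now fix $B \preceq A$, and let $\ell^\ast$ be this common chain length from $e$ to $B$. By \cref{cor:codimension-saturated-chains}(3), $\ell^\ast = \codim(X_B)$. By \cref{cor:codimension-saturated-chains}(2), every reduced word of every $w \in \perm(B)$ lies in $\chains(B)$; since all such chains have length $\ell^\ast$, every $w\in \perm(B)$ satisfies $\ell(w) = \codim(X_B)$. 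Via \cref{prop:I_A-intersection-of-schubs-in-perm}, this is exactly the statement that $X_B$ is equidimensional.

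I expect the main subtlety to lie in the ($\Rightarrow$) direction, specifically in the chain-extension step that propagates the common-length hypothesis from $A$ down to arbitrary $B \preceq A$. This requires checking that a saturated chain from $B$ to $A$ exists (immediate from $B \preceq A$ and the definition of weak order as the transitive closure of covers) and that concatenation of saturated chains remains saturated (which follows from the covering relations being preserved). The ($\Leftarrow$) direction is essentially a direct combination of \cref{cor:stepdowncodimbyatmost1} and \cref{cor:descent-of-A-implies-descent-in-perm}, so I anticipate no real obstacle there.
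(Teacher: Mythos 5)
Your proof is correct and follows essentially the same route as the paper: both directions rest on \cref{cor:codimension-saturated-chains}, \cref{cor:stepdowncodimbyatmost1}, \cref{cor:descent-of-A-implies-descent-in-perm}, and \cref{prop:I_A-intersection-of-schubs-in-perm}, and your chain-concatenation step in the ($\Rightarrow$) direction is exactly the paper's argument, phrased directly rather than via the contrapositive. The only cosmetic difference is in the ($\Leftarrow$) direction, where the paper runs an induction on strong order while you telescope the codimension drop directly along an arbitrary saturated chain---the same ingredients, in slightly more streamlined packaging.
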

\begin{proof}

\noindent ($\Rightarrow$) We will prove the contrapositive.  Suppose there exists some $B\preceq A$ so that $X_B$ is not equidimensional.  We must show that there are saturated chains of different lengths from the identity to $A$.

For each $w\in \perm(B)$, there exists at least one saturated chain of length $\ell(w)$ from the identity to $B$  by \cref{cor:codimension-saturated-chains}.  By \cref{prop:I_A-intersection-of-schubs-in-perm}, $X_B$ is equidimensional if and only if all elements of $\perm(B)$ are of the same length. Hence, if $X_B$ is not equidimensional, then there are saturated chains of different lengths from the identity to $B$. Fix two such chains, $\mathcal{C}_1$ and $\mathcal{C}_2$, of different lengths. 

Because $B\preceq A$, we may choose a saturated chain $\mathcal{C}$ from $B$ to $A$. Extending the two $\mathcal{C}_i$ by $\mathcal{C}$ constructs saturated chains of different lengths from the identity to $A$.

\noindent ($\Leftarrow$) We use induction on $\asm(n)$ under strong order.  The statement is trivially true for the identity.  Now fix $A\in \asm(n)$, and suppose that the statement holds for all $A'<A$.  Assume that $X_B$ is equidimensional for all $B \preceq A$.

Then, in particular, if $\pi_i(A)\prec A$ is a weak order cover, we have $\pi_i(A) < A$ and the inductive hypothesis holds for $\pi_i(A)$.  Each saturated chain from the identity to $A$ is formed by concatenating a saturated chain from the identity to $\pi_i(A)$ for some $i$ in the descent set of $A$ with the chain recording the cover $\pi_i(A) \prec A$.  So it is enough to show that the codimensions of the $X_{\pi_i(A)}$ are constant for all $i$ in the descent set of $A$.  Fix any such $i$.  By \cref{cor:descent-of-A-implies-descent-in-perm}, there exists $w \in \perm(A)$ so that $i$ is a descent of $w$.  Because $X_A$ is equidimensional, $\ell(w) = \codim(X_A)$.  Hence by \cref{cor:stepdowncodimbyatmost1}, $\codim(X_{\pi_i(A)})=\codim(X_A)-1$.  
\end{proof}

The authors consider \cref{prop:saturated-chains-same-length} to be somewhat surprising for the following reason: It is easy to see that $X_A$ is equidimensional if and only if $X_{A^T}$ is.  Indeed, the assignments $z_{i,j} \mapsto z_{j,i}$ induce an isomorphism on the varieties' coordinate rings.  Meanwhile, the weak order intervals between the identity and $A$ and $A^T$, respectively, do not enjoy any such strong, naive connection.  
See \cref{figure:example_of_A} and \cref{figure:example_of_A_transpose} for an example where the weak order intervals for $A$ and $A^T$ are not anti-isomorphic. We thank Vic Reiner for pointing out that, moreover, these intervals have different homotopy types and different M\"obius functions.  (For $w\in S_n$, it is true that the order ideals for $w$ and $w^T$ are anti-isomorphic posets.)

    \begin{figure}[!htb]
    \centering
    \begin{minipage}{.45\textwidth}
        \centering
            \scalebox{0.49}{
        \begin{tikzpicture}
        \node at (0,4.5){$A$};
        \node at (0,3.7){{\rotatebox{90}{$\,=$}}};
	\node (A) at (0,2) {
    $\begin{pmatrix} 
    0 & 0 & 1 & 0\\
    0 & 1  & -1 & 1 \\
    1 & 0 & 0 & 0\\
    0 & 0 & 1 & 0
    \end{pmatrix}$};
	\node (B) at (-4,-2) {
 $\begin{pmatrix} 
    0 & 1 & 0 & 0\\
    0 & 0  & 0 & 1 \\
    1 & 0 & 0 & 0\\
    0 & 0 & 1 & 0
    \end{pmatrix}$
    };
    \node (C) at (4,-2) {
 $\begin{pmatrix} 
    0 & 0 & 1 & 0\\
    1 & 0  & 0 & 0 \\
    0 & 1 & -1 & 1\\
    0 & 0 & 1 & 0
    \end{pmatrix}$
    };
	\node (D) at (2,-6) {
 $\begin{pmatrix} 
    1 & 0 & 0 & 0\\
    0 & 0  & 1 & 0 \\
    0 & 1 & -1 & 1\\
    0 & 0 & 1 & 0
    \end{pmatrix}$
    };
 \node (E) at (7,-6) {
 $\begin{pmatrix} 
    0 & 0 & 1 & 0\\
    1 & 0  & 0 & 0 \\
    0 & 1 & 0 & 0\\
    0 & 0 & 0 & 1
    \end{pmatrix}$
 };
 \node (F) at (0,-10) {
 $\begin{pmatrix} 
    1 & 0 & 0 & 0\\
    0 & 1  & 0 & 0 \\
    0 & 0 & 0 & 1\\
    0 & 0 & 1 & 0
    \end{pmatrix}$
 };
 \node (G) at (4,-10) {
 $\begin{pmatrix} 
    1 & 0 & 0 & 0\\
    0 & 0  & 1 & 0 \\
    0 & 1 & 0 & 0\\
    0 & 0 & 0 & 1
    \end{pmatrix}$
 };
 \node (H) at (0,-14) {
 $\begin{pmatrix} 
    1 & 0 & 0 & 0\\
    0 & 1  & 0 & 0 \\
    0 & 0 & 1 & 0\\
    0 & 0 & 0 & 1
    \end{pmatrix}$
 };
 \node (AA) at (-4,-6) {
  $\begin{pmatrix} 
    0 & 1 & 0 & 0\\
    1 & 0  & 0 & 0 \\
    0 & 0 & 0 & 1\\
    0 & 0 & 1 & 0
    \end{pmatrix}$
 };
  \node (BB) at (-4,-10) {
  $\begin{pmatrix} 
    0 & 1 & 0 & 0\\
    1 & 0  & 0 & 0 \\
    0 & 0 & 1 & 0\\
    0 & 0 & 0 & 1
    \end{pmatrix}$
 };
	\draw[->,thick] (A) -- node[above left ]{$1$} (B) ;
    \draw[->,thick] (A) -- node[above right]{$2$} (C) ;
    \draw[->,thick] (B) -- node[left]{$2$} (AA) ;
    \draw[->,thick] (C) -- node[left]{$1$} (D) ;
    \draw[->,thick] (C) -- node[right]{$3$} (E) ;
    \draw[->,thick] (D) -- node[left]{$2$} (F) ;
    \draw[->,thick] (D) -- node[right]{$3$} (G) ;
    \draw[->,thick] (E) -- node[right]{$1$} (G) ;
    \draw[->,thick] (F) -- node[right]{$3$} (H) ;
    \draw[->,thick] (G) -- node[below right]{$2$} (H) ;
    \draw[->,thick] (AA) -- node[left]{$3$} (BB) ;
    \draw[->,thick] (AA) -- node[right]{$1$} (F) ;
    \draw[->,thick] (BB) -- node[below left]{$1$} (H) ;
\end{tikzpicture}
}
\caption{Weak order interval from an ASM $A$ to the identity. An edge label $i \in [3]$ indicates a covering relation determined by $\pi_i$.  This interval also appears in \cite[Figure 1]{HR20}.}
    \label{figure:example_of_A}
    \end{minipage}%
    \begin{minipage}{0.55\textwidth}
        \centering
	\scalebox{0.49}{
\begin{tikzpicture}
        \node at (0,6.2){$A^T$};
        \node at (0,5.5){{\rotatebox{90}{$\,=$}}};
	\node (A) at (0,4) {
    $\begin{pmatrix} 
    0 & 0 & 1 & 0\\
    0 & 1  & 0 & 0 \\
    1 & -1 & 0 & 1\\
    0 & 1 & 0 & 0
    \end{pmatrix}$};
	\node (B) at (-8,-1) {
 $\begin{pmatrix} 
    0 & 1 & 0 & 0\\
    0 & 0  & 1 & 0 \\
    1 & -1 & 0 & 1\\
    0 & 1 & 0 & 0
    \end{pmatrix}$
    };
    \node (C) at (0,-1) {
 $\begin{pmatrix} 
    0 & 0 & 1 & 0\\
    0 & 1  & 0 & 0 \\
    1 & 0 & 0 & 0\\
    0 & 0 & 0 & 1
    \end{pmatrix}$
    };
    \node (D) at (7,-1) {
 $\begin{pmatrix} 
    0 & 0 & 1 & 0\\
    1 & 0  & 0 & 0 \\
    0 & 0 & 0 & 1\\
    0 & 1 & 0 & 0
    \end{pmatrix}$
    };

\node (E) at (-1,-5) {
 $\begin{pmatrix} 
    0 & 1 & 0 & 0\\
    1 & -1  & 1 & 0 \\
    0 & 0 & 0 & 1\\
    0 & 1 & 0 & 0
    \end{pmatrix}$
    };
     \node (F) at (-8,-8) { 
  $\begin{pmatrix} 
    0 & 1 & 0 & 0\\
    0 & 0  & 1 & 0 \\
    1 & 0 & 0 & 0\\
    0 & 0 & 0 & 1
    \end{pmatrix}$
 };
	\node (G) at (7,-8) {
 $\begin{pmatrix} 
    0 & 0 & 1 & 0\\
    1 & 0  & 0 & 0 \\
    0 & 1 & 0 & 0\\
    0 & 0 & 0 & 1
    \end{pmatrix}$
    };
 \node (H) at (2,-8) {
 $\begin{pmatrix} 
    1 & 0 & 0 & 0\\
    0 & 0  & 1 & 0 \\
    0 & 0 & 0 & 1\\
    0 & 1 & 0 & 0
    \end{pmatrix}$
 };
 \node (K) at (4,-12) {
 $\begin{pmatrix} 
    1 & 0 & 0 & 0\\
    0 & 0  & 1 & 0 \\
    0 & 1 & 0 & 0\\
    0 & 0 & 0 & 1
    \end{pmatrix}$
 };
 \node (L) at (0,-16) { 
 $\begin{pmatrix} 
    1 & 0 & 0 & 0\\
    0 & 1  & 0 & 0 \\
    0 & 0 & 1 & 0\\
    0 & 0 & 0 & 1
    \end{pmatrix}$
 };

  \node (J) at (-4,-12) {
  $\begin{pmatrix} 
    0 & 1 & 0 & 0\\
    1 & 0  & 0 & 0 \\
    0 & 0 & 1 & 0\\
    0 & 0 & 0 & 1
    \end{pmatrix}$
 };
   \node (I) at (-4,-8) {
  $\begin{pmatrix} 
    0 & 1 & 0 & 0\\
    1 & -1  & 1 & 0 \\
    0 & 1 & 0 & 0\\
    0 & 0 & 0 & 1
    \end{pmatrix}$
 };
	\draw[->,thick] (A) -- node[above left]{$1$} (B) ;
    \draw[->,thick] (A) -- node[left]{$3$} (C) ;
    \draw[->,thick] (A) -- node[above]{$2$} (D) ;
    \draw[->,thick] (B) -- node[above right ]{$2$} (E) ;
    \draw[->,thick] (B) -- node[left]{$3$} (F) ;
    \draw[->,thick] (C) -- node[above left ]{$1$} (F) ;
    \draw[->,thick] (C) -- node[below left]{$2$} (G) ;
    \draw[->,thick] (D) -- node[right]{$3$} (G) ;
    \draw[->,thick] (D) -- node[right]{$1$} (H) ;
    \draw[->,thick] (G) -- node[below right]{$1$} (K) ;
    \draw[->,thick] (H) -- node[right]{$3$} (K) ;
    \draw[->,thick] (K) -- node[below right]{$2$} (L) ;
    \draw[->,thick] (J) -- node[below left]{$1$} (L) ;
    \draw[->,thick] (F) -- node[below left]{$2$} (J) ;
    \draw[->,thick] (I) -- node[above right]{$2$} (J) ;
    \draw[->,thick] (E) -- node[above right]{$1$} (H) ;
    \draw[->,thick] (I) -- node[above right]{$1$} (K) ;
    \draw[->,thick] (E) -- node[above left ]{$3$} (I) ;
\end{tikzpicture}}
    \caption{Weak order interval from the ASM $A^T$ to the identity where $A$ is the ASM from \cref{figure:example_of_A}. An edge label $i \in [3]$ indicates a covering relation determined by $\pi_i$.}
    \label{figure:example_of_A_transpose}
    \end{minipage}
\end{figure}

\begin{conjecture}\label{conj:characterize-CM-by-chains}
    Let $A\in \asm(n)$.  The weak order saturated chains from the identity to $A$ are all of the same length if and only if $X_B$ is Cohen--Macaulay for all $B\preceq A$.
\end{conjecture}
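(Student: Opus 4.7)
The easy direction is immediate from the existing machinery. Since any Cohen--Macaulay scheme is equidimensional, the hypothesis that $X_B$ is Cohen--Macaulay for every $B\preceq A$ implies that $X_B$ is equidimensional for every such $B$, and \cref{prop:saturated-chains-same-length} then yields that all saturated chains from the identity to $A$ are of the same length.

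For the hard direction, the plan is to proceed by induction on the rank of $B$ inside the weak order interval from $e$ to $A$. The base case $B = e$ is trivial since $X_e = \Mat(n)$. For the inductive step, suppose $X_{B'}$ is Cohen--Macaulay for all $B' \prec B$ in the interval and that all saturated chains from $e$ to $B$ have the same length. By \cref{prop:saturated-chains-same-length}, $X_B$ is equidimensional. Moreover, by \cref{cor:stepdowncodimbyatmost1}, every covering relation $\pi_i(B) \prec B$ must strictly decrease codimension by one; otherwise, prepending two distinct chains would produce saturated chains to $A$ of different lengths. Combined with \cref{prop:intersectoperator}, which controls $\perm(\pi_i(B))$ in terms of $\perm(B)$, this gives a concrete relationship between $I_B$ and the inductively Cohen--Macaulay ideals $I_{\pi_i(B)}$.

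The principal obstacle is the algebraic lift: passing from equidimensionality of $X_B$ together with Cohen--Macaulayness of the $X_{\pi_i(B)}$ to Cohen--Macaulayness of $X_B$. One promising route is to seek a Mayer--Vietoris-style short exact sequence of the form $0 \to S/I_B \to \bigoplus_{i \in \des(B)} S/I_{\pi_i(B)} \to S/J \to 0$ for an appropriate auxiliary ideal $J$, and then control depth through the induced long exact sequence in local cohomology, using the inductive hypothesis and the tight codimension bookkeeping above to force $\mathrm{depth}(S/I_B)$ to be maximal. A second possible route is degeneration-theoretic: building on the diagonal Gr\"obner degenerations and bumpless pipe dream combinatorics of \cite{KW23}, one would try to exhibit a squarefree initial ideal of $I_B$ whose Stanley--Reisner complex is shellable exactly when all saturated weak order chains from $e$ to $B$ share a common length. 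In either approach, the delicate step is translating a purely combinatorial chain condition on the poset $\asm(n)$ into an algebraic depth condition on $S/I_B$, and I expect this translation, rather than the induction scaffolding, to be where the real work lies.
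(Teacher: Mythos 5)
The statement you are trying to prove is stated in the paper as a \emph{conjecture}, and the paper does not prove the forward direction either: its accompanying remark establishes exactly what you call the easy direction (Cohen--Macaulay implies equidimensional, then \cref{prop:saturated-chains-same-length}), observes that by the argument in that proposition's proof it would suffice to show that the chain condition on $A$ alone forces $X_A$ to be Cohen--Macaulay, and reports only a computational verification for $n \leq 6$. So your first paragraph matches the paper's reasoning, but your ``hard direction'' is not a proof --- it is a research plan, and you acknowledge yourself that the decisive step (passing from equidimensionality plus Cohen--Macaulayness of the $X_{\pi_i(B)}$ to Cohen--Macaulayness of $X_B$) is left open. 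As written, the proposal therefore contains a genuine gap: the entire forward implication.

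Beyond the missing core step, one concrete piece of the sketch is set up incorrectly. Since $\pi_i(B) \leq B$ in strong order, we have $I_{\pi_i(B)} \subseteq I_B$, so there is no natural map $S/I_B \to \bigoplus_{i \in \des(B)} S/I_{\pi_i(B)}$; the surjections run the other way, and the proposed short exact sequence does not exist in the form you wrote. The natural Mayer--Vietoris input would instead come from the irreducible decomposition $X_B = \bigcup_{w \in \perm(B)} X_w$ of \cref{prop:I_A-intersection-of-schubs-in-perm}, whose pieces are Fulton's Cohen--Macaulay matrix Schubert varieties --- but then the auxiliary terms involve sums $\sum_{w \in U} I_w$, which are again ASM ideals of a priori unknown Cohen--Macaulay status, so the induction does not close there either. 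Also note that the inductive hypothesis you invoke (``$X_{B'}$ is Cohen--Macaulay for all $B' \prec B$'') is only available if you have already proved the conjecture for lower ranks under the chain hypothesis, which is fine as scaffolding but again leaves all the weight on the unproved algebraic lift. In short: your easy direction agrees with the paper; the rest remains open, as it does in the paper.
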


\begin{remark} Because Cohen--Macaulay varieties are necessarily equidimensional, the backward direction of \cref{conj:characterize-CM-by-chains} follows from \cref{prop:saturated-chains-same-length}.  As argued in the proof of \cref{prop:saturated-chains-same-length}, the property of having all weak order chains from the identity of the same length is preserved under application of any $\pi_i$.  Hence, in order to prove \cref{conj:characterize-CM-by-chains}, it would be sufficient to show the following: If the weak order chains from the identity to $A$ are all of the same length, then $X_A$ is Cohen--Macaulay.  \cref{conj:characterize-CM-by-chains} holds for all $A \in \asm(n)$ for $n \leq 6$.
\end{remark}

\begin{example}
    In order to gain intuition for conditions of the form \say{for all $B \preceq A$} appearing in the above proposition and conjecture, we note that $X_{\pi_i(A)}$ may satisfy fewer desirable algebraic properties than $X_A$ does.  For example, consider \[
    A = \begin{pmatrix}
        0 & 0 & 1 & 0 \\
        0 & 1 & 0 & 0\\
        1 & -1 & 0 & 1\\
        0 & 1 & 0 & 0
    \end{pmatrix} \quad \mbox{ and } \quad \pi_1(A) = \begin{pmatrix}
        0 & 1 & 0 & 0 \\
        0 & 0 & 1 & 0\\
        1 & -1 & 0 & 1\\
        0 & 1 & 0 & 0
    \end{pmatrix}.
    \] Then $I_A = (z_{1,1}, z_{1,2}, z_{2,1}, z_{2,2}z_{3,1})$ while $I_{\pi_1(A)} = (z_{1,1}, z_{2,1}, z_{2,2}z_{3,1}, z_{1,2}z_{3,1})$.  Observe that $X_A$ is a complete intersection, hence equidimensional, while $X_{\pi_1(A)}$ has one component of codimension $3$ and another of codimension $4$.
\end{example}

Very little is currently known about Cohen--Macaulayness of ASM varieties.  It is not even known whether or not $X_A$ is Cohen--Macaulay if and only if $X_{[1] \oplus A}$ is Cohen--Macaulay, where $[1] \oplus A$ denotes the block sum.  This equivalence, conjectured in \cite{AFH+}, is one example of a statement that would be readily deducible were \cref{conj:characterize-CM-by-chains} proved.  

\section{$K$-polynomials and Grothendieck polynomials}

Our next goal is to study Grothendieck polynomials - otherwise known as twisted $K$-polynomials - of ASM varieties.  In order to do so, we will first introduce the relevant concepts and notation, following the reference \cite[Chapter 8]{MS05}.

\subsection{Preliminaries}
Let $R=\field[z_1,\ldots,z_m]$ be a polynomial ring, and fix an $\mathbb{N}^d$-grading. 
Let 
\begin{equation*}
    M=\bigoplus_{\mathbf a\in\mathbb{N}^d}M_{\mathbf{a}}
\end{equation*}
be a finitely generated $\mathbb{N}^d$-graded $R$-module, and assume that $\dim_\field(M_{\mathbf{a}})<\infty$ for all $\mathbf a\in\mathbb{N}^d$.
The \newword{multigraded Hilbert series} of $M$ is the formal power series in the variables $t_1,\ldots,t_d$ given by
\begin{equation*}
    \hilb(M;\mathbf t)=\sum_{\mathbf{a}\in\mathbb{N}^d}\dim_\field(M_{\mathbf{a}})\mathbf{t}^{\mathbf{a}},
\end{equation*}
where $\mathbf{t}^{\mathbf{a}}=t_1^{a_1}\cdots t_d^{a_d}$.  There exists a unique polynomial $\mathcal{K}(M;\mathbf{t})\in\Z[t_1,\ldots,t_d]$, called the \newword{$K$-polynomial} of $M$, such that 
\begin{equation*}
    \hilb(M;\mathbf{t})=\frac{\mathcal{K}(M;\mathbf{t})}{\prod_{i\in[m]}(1-\mathbf{t}^{\deg(z_i)})}.
\end{equation*}

Given a homogeneous (with respect to the relevant grading) ideal $J$ of $R$ and the $K$-polynomial $\mathcal K(R/J;\mathbf t)=\mathcal K(R/J;t_1,\ldots,t_d)$, we define \[\widetilde{\mathcal K}(R/J;\mathbf t)=\mathcal K(R/J;1-\mathbf t)=\mathcal K(R/J;1-t_1,\ldots,1-t_d)\] to be the \newword{twisted $K$-polynomial} of $R/J$.  This twisting has geometric interpretation in terms of the Poincar\'e isomorphism, the result of which is that twisted $K$-polynomials represent classes in $K$-homology (while untwisted $K$-polynomials represent classes in $K$-cohomology).  For a description of this correspondence, see \cite[Remark 2.3.5]{KM05} and \cite[Chapter 15]{Ful98}.  We prefer to work with twisted $K$-polynomials primarily for combinatorial reasons, as borne out below and as has been done in, for example, \cite{EY17, MSD20, BS22, DMS24, PS24, PY24, NS25}, among many others.  

In particular, twisted K-polynomials of matrix Schubert varieties have a cancellation-free combinatorial formula in terms of pipe dreams (\cite{FK94FPSAC, BB93, KM04, KM05}) which aid in the analysis of these polynomials.  We will discuss these polynomials and their generalizations in \cref{subsection:schubertandgrothendieck}.

The \newword{multidegree} of $R/J$, denoted $\mathcal C(R/J;\mathbf t)$, is the sum of the lowest degree terms of $\widetilde{\mathcal K}(R/J;\mathbf t)$.  

Recall that if $\sigma$ is a term order and $J$ a is homogeneous ideal, then we have the equality $\hilb(R/J) = \hilb(R/\init_\sigma(J))$, from which it follows also that $\widetilde{\mathcal K}(R/J;\mathbf t) = \widetilde{\mathcal K}(R/\init_\sigma(J);\mathbf t)$ and $\mathcal C(R/J;\mathbf t) = \mathcal C(R/\init_\sigma(J);\mathbf t)$.  In order to understand these polynomials inductively in the case of ASM ideals, it will be valuable for us to use the recursions described below, which require us to work briefly with certain initial ideals of ASM ideals rather than with ASM ideals directly.

\begin{prop}\label{prop:antiDiagInit}
    Let $I_{a_1}, \ldots, I_{a_k}$ be ASM ideals.  Set $J = \bigcap_{i \in [k]} I_{a_i}$.  Let $\sigma$ be an antidiagonal term order. Then $\init_{\sigma}(J) = \bigcap_{i \in [k]} \init_\sigma(I_{a_i})$.
\end{prop}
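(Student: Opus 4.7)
The plan is to prove the nontrivial containment $\bigcap_{i \in [k]} \init_\sigma(I_{a_i}) \subseteq \init_\sigma(J)$ by a Hilbert series comparison. The opposite containment $\init_\sigma(J) \subseteq \bigcap_i \init_\sigma(I_{a_i})$ is immediate: if $f \in J$, then $f \in I_{a_i}$ for every $i$, so $\init_\sigma(f) \in \init_\sigma(I_{a_i})$ for every $i$.

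The key tool I will need is a lemma asserting that antidiagonal initial ideals commute with sums of ASM ideals: for any $A, B \in \asm(n)$,
\[\init_\sigma(I_A + I_B) = \init_\sigma(I_A) + \init_\sigma(I_B).\]
The $\supseteq$ direction is clear. For $\subseteq$, I will use \cref{prop:idealsum} to identify $I_A + I_B$ with $I_{A \vee B}$, and invoke the Knutson--Miller antidiagonal Gröbner basis theorem (extended from permutations to arbitrary ASM ideals in \cite{Knu09, Wei17}): for any $C \in \asm(n)$, $\init_\sigma(I_C)$ is generated by the antidiagonals of the $(\rk_C(i,j)+1)$-minors of $Z_{[i],[j]}$ as $(i,j)$ ranges over $\ess(C)$. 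Taking $C = A \vee B$ and applying \cref{lemma:joinmeet} to write $\rk_{A \vee B}(i,j) = \min(\rk_A(i,j), \rk_B(i,j))$, I see that each such generator is the antidiagonal of a minor already lying in $I_A$ or in $I_B$ (whichever summand attains the minimum), and hence already lies in $\init_\sigma(I_A) + \init_\sigma(I_B)$.

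With the lemma in hand, I will handle the case $k = 2$ using the Mayer--Vietoris short exact sequence
\[0 \to R/(I \cap I') \to R/I \oplus R/I' \to R/(I+I') \to 0\]
applied to both $(I_{a_1}, I_{a_2})$ and $(\init_\sigma(I_{a_1}), \init_\sigma(I_{a_2}))$. Passage to an initial ideal preserves Hilbert series, and the sum lemma guarantees that the $R/(I_{a_1} + I_{a_2})$ term's Hilbert series agrees with its initial-ideal analog; together these yield $\hilb(R/\init_\sigma(J)) = \hilb\bigl(R/(\init_\sigma(I_{a_1}) \cap \init_\sigma(I_{a_2}))\bigr)$. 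Combined with the trivial containment, equality of Hilbert series forces equality of ideals. For general $k$, I proceed by induction: write $J = J' \cap I_{a_k}$ with $J' = \bigcap_{i<k} I_{a_i}$; by \cref{prop:I_A-intersection-of-schubs-in-perm} and \cite{Wei17}, arbitrary intersections of ASM ideals are themselves ASM ideals, so $J'$ qualifies for the inductive hypothesis, and the $k=2$ case then applies to the pair $(J', I_{a_k})$.

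The main obstacle is the sum lemma, whose proof rests on the antidiagonal Gröbner basis theorem for all ASM ideals, not merely for those indexed by permutations. While the permutation case is classical, the extension to ASMs must be pulled carefully from the cited sources; alternatively, a more self-contained argument could proceed by deducing it from the fact that $\init_\sigma(I_D)$ is a radical (squarefree monomial) ideal for every $D \in \asm(n)$ together with the known minimal primary decomposition of $\init_\sigma(I_D)$.
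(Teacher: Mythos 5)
Your $k=2$ argument is essentially sound (granting the antidiagonal Gr\"obner basis theorem for arbitrary ASM ideals, which you rightly flag as needing careful citation), and it is a genuinely different, more hands-on route than the paper, whose entire proof is a citation of the Frobenius-splitting argument in \cite[Section 7.2]{Knu09}. But the induction step for $k\geq 3$ rests on a false claim: you assert that ``arbitrary intersections of ASM ideals are themselves ASM ideals,'' citing \cref{prop:I_A-intersection-of-schubs-in-perm}. That proposition says the opposite-direction statement, namely that every ASM ideal \emph{is} an intersection of Schubert determinantal ideals; it does not say that every intersection of ASM ideals is an ASM ideal, and in fact this fails. Intersections of ASM ideals correspond to unions of ASM varieties, and the paper itself notes (just after \cref{prop:idealsum}) that $X_{213}\cup X_{132}$ is not an ASM variety, i.e.\ $I_{213}\cap I_{132}$ is not an ASM ideal. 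Consequently $J'=\bigcap_{i<k} I_{a_i}$ need not be an ASM ideal, and your $k=2$ case --- both the sum lemma $\init_\sigma(I_A+I_B)=\init_\sigma(I_A)+\init_\sigma(I_B)$ and the Mayer--Vietoris/Hilbert-series comparison, which use $I_A+I_B=I_{A\vee B}$ and the essential-minor Gr\"obner basis --- simply does not apply to the pair $(J',I_{a_k})$.

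The gap is not cosmetic: to run your scheme for general $k$ you would need to control $\hilb\bigl(R/(J'+I_{a_k})\bigr)$ against $\hilb\bigl(R/(\init_\sigma(J')+\init_\sigma(I_{a_k}))\bigr)$, which amounts to a distributivity statement such as $\bigl(\bigcap_{i<k} I_{a_i}\bigr)+I_{a_k}=\bigcap_{i<k}\bigl(I_{a_i}+I_{a_k}\bigr)$ together with the inductive hypothesis applied to the ASM ideals $I_{a_i}+I_{a_k}=I_{a_i\vee a_k}$. Such distributivity is exactly the content of \cref{lem:rewrite-sum-of-antichain-ideals} later in the paper, and its proof there uses the Frobenius-splitting radicality input from \cite{BK05,Knu09} --- i.e., essentially the same machinery the paper invokes for the proposition itself. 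So either supply that ingredient explicitly, or restrict your argument to $k=2$ and fall back on the compatibly-split-degeneration argument of \cite{Knu09} for the general case; as written, the inductive step does not go through.
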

\begin{proof}
    See \cite[Section 7.2]{Knu09}.
\end{proof}

\begin{lemma}
\label{lemma:hilbertaltsumprimer}
    Let $R=\field[z_1,\ldots,z_m]$ be a polynomial ring equipped with an $\mathbb{N}^d$-grading.  Let $I_1,\ldots, I_k$ be monomial ideals of $R$.  If $J=\bigcap_{i\in[k]}I_i$ then
    \[\hilb(R/J;\mathbf t)=\sum_{\emptyset \neq U\subseteq [k]}(-1)^{|U|-1}\hilb\left(R/\sum_{i\in U}I_i;\mathbf t\right).\]
\end{lemma}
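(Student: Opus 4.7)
The plan is to verify the identity monomial by monomial. Because each $I_i$ is a monomial ideal, so are every sum $\sum_{i \in U} I_i$ and the intersection $J = \bigcap_{i \in [k]} I_i$. Consequently, for any monomial ideal $I \subseteq R$ the quotient $R/I$ has a $\field$-basis consisting of the monomials $z^{\mathbf a}$ not in $I$, so
\[
\hilb(R/I;\mathbf t) \;=\; \sum_{z^{\mathbf a} \notin I} \mathbf{t}^{\mathbf a}.
\]
Thus it suffices to check that the coefficient of $\mathbf{t}^{\mathbf a}$ agrees on both sides for each monomial $z^{\mathbf a}$.

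First I would fix a monomial $z^{\mathbf a}$ and record precisely when it contributes to each term. Setting $S := \{\,i \in [k] : z^{\mathbf a} \notin I_i\,\}$, I would observe two things: $z^{\mathbf a} \notin J$ precisely when $S \neq \emptyset$, and $z^{\mathbf a} \notin \sum_{i \in U} I_i$ (which, being monomial, is generated by the monomials in $\bigcup_{i \in U} I_i$) precisely when $U \subseteq S$.

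Next I would collect the coefficient of $\mathbf{t}^{\mathbf a}$ on the right-hand side, which equals
\[
\sum_{\emptyset \neq U \subseteq [k],\; U \subseteq S} (-1)^{|U|-1} \;=\; \sum_{\emptyset \neq U \subseteq S} (-1)^{|U|-1}.
\]
If $S = \emptyset$, meaning $z^{\mathbf a} \in J$, this sum is empty and contributes $0$, matching the left-hand side. If $S \neq \emptyset$, the standard binomial identity yields
\[
\sum_{\emptyset \neq U \subseteq S} (-1)^{|U|-1} \;=\; 1 - (1-1)^{|S|} \;=\; 1,
\]
which again matches the left-hand side. Summing over all monomials $z^{\mathbf a}$ yields the claimed identity.

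There is no serious obstacle in this argument, since it is a monomial-level inclusion-exclusion. The only point requiring care is that sums and intersections of monomial ideals remain monomial ideals, so that the basis-of-monomials description of $R/I$ applies uniformly to every quotient appearing in the statement; this is a standard fact.
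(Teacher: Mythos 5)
Your proof is correct: the monomial-by-monomial inclusion--exclusion, using that a monomial lies outside $\sum_{i\in U}I_i$ exactly when it lies outside every $I_i$ with $i\in U$, is precisely the standard argument here. The paper itself offers no proof (it declares \cref{lemma:hilbertaltsumprimer} an exercise), and your write-up is the expected solution to that exercise.
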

\cref{lemma:hilbertaltsumprimer} is an exercise.  

\begin{lemma}
\label{lemma:hilbertaltsum}
    Let $I_{A_1},\ldots, I_{A_k}$ be ASM ideals.  If $J=\bigcap_{i\in[k]}I_{A_i}$, then \[
    \hilb(R/J;\mathbf t)=\sum_{\emptyset \neq U\subseteq [k]}(-1)^{|U|-1}\hilb\left(R/\sum_{i\in U}I_{A_i};\mathbf t\right).
    \]
\end{lemma}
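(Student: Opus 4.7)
The plan is to reduce the statement for ASM ideals to the monomial ideal case handled by \cref{lemma:hilbertaltsumprimer}, using Gröbner degeneration with respect to an antidiagonal term order $\sigma$. The key point is that multigraded Hilbert series are invariant under passage to an initial ideal, so it suffices to prove the desired equality after replacing every ASM ideal appearing in the formula by its antidiagonal initial ideal, which is a monomial ideal.

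Concretely, first I would fix an antidiagonal term order $\sigma$ on $R$ and write $M_i := \init_\sigma(I_{A_i})$ for each $i \in [k]$. By \cref{prop:antiDiagInit}, the initial ideal of the intersection is the intersection of the initial ideals, so
\[
\init_\sigma(J) = \bigcap_{i \in [k]} M_i,
\]
and thus $\hilb(R/J;\mathbf{t}) = \hilb\bigl(R/\bigcap_i M_i;\mathbf{t}\bigr)$. Next I would observe that for each nonempty $U \subseteq [k]$, \cref{prop:idealsum} gives $\sum_{i \in U} I_{A_i} = I_{\bigvee_{i \in U} A_i}$, so each such sum is again an ASM ideal. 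I would apply \cref{lemma:hilbertaltsumprimer} to the monomial ideals $M_1, \ldots, M_k$ to obtain
\[
\hilb\Bigl(R\Big/\bigcap_{i\in[k]} M_i;\mathbf{t}\Bigr) = \sum_{\emptyset \neq U \subseteq [k]} (-1)^{|U|-1} \hilb\Bigl(R\Big/\sum_{i \in U} M_i;\mathbf{t}\Bigr),
\]
and then match the right-hand side against the statement term by term.

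The main obstacle is the identification $\sum_{i \in U} M_i = \init_\sigma\bigl(\sum_{i \in U} I_{A_i}\bigr)$: the inclusion $\sum_{i \in U} M_i \subseteq \init_\sigma\bigl(\sum_{i \in U} I_{A_i}\bigr)$ is automatic, but the reverse inclusion requires that the union of Fulton-style generators of the summand ASM ideals forms a Gröbner basis under $\sigma$ for the sum. This is exactly the content of the Frobenius-splitting/compatibility framework of \cite[Section~7.2]{Knu09} applied to the ASM ideal $I_{\bigvee_{i \in U} A_i}$, and it is the same input that underlies \cref{prop:antiDiagInit}. Once this equality is in hand, Gröbner invariance of Hilbert series converts each $\hilb(R/\sum_{i \in U} M_i;\mathbf{t})$ back into $\hilb(R/\sum_{i \in U} I_{A_i};\mathbf{t})$, and combining with the intersection identity above delivers the claimed inclusion–exclusion identity for $\hilb(R/J;\mathbf{t})$.
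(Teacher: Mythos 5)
Your proposal is correct and follows essentially the same route as the paper's proof: fix an antidiagonal term order, use invariance of Hilbert series under passage to the initial ideal, apply \cref{prop:antiDiagInit} and \cref{lemma:hilbertaltsumprimer} to the monomial initial ideals, and then identify $\sum_{i\in U}\init_\sigma(I_{A_i})$ with $\init_\sigma\bigl(\sum_{i\in U}I_{A_i}\bigr)$ via the Gr\"obner/Frobenius-splitting compatibility of \cite[Section 7.2]{Knu09}. In fact you make explicit the justification for that last identification, which the paper's proof leaves implicit in its final equality.
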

\begin{proof}
    Fix an antidiagonal term order $\sigma$. Combining \cref{prop:antiDiagInit} and \cref{lemma:hilbertaltsumprimer}, we have \begin{align*}
\hilb(R/J;\mathbf t) &= \hilb(R/\init_\sigma(J);\mathbf t) \\
&= \sum_{\emptyset \neq U\subseteq [k]}(-1)^{|U|-1}\hilb\left(R/\sum_{i\in U}\init_\sigma(I_{A_i});\mathbf t\right) \\
&= \sum_{\emptyset \neq U\subseteq [k]}(-1)^{|U|-1}\hilb\left(R/\sum_{i\in U}I_{A_i};\mathbf t\right). \qedhere
    \end{align*}
\end{proof}

As an immediate consequence, we have the following:
\begin{corollary}
\label{cor:kandtwistk}
    Let $I_{A_1},\ldots, I_{A_k}$ be ASM ideals.  If $J=\bigcap_{i\in[k]}I_{A_i}$, then
    \[\mathcal K(R/J;\mathbf t)=\sum_{\emptyset \neq U\subseteq [k]}(-1)^{|U|-1}\mathcal K\left(R/\sum_{i\in U}I_{A_i};\mathbf t\right)\] and
     \[\widetilde{\mathcal K}(R/J;\mathbf t)=\sum_{\emptyset \neq U\subseteq [k]}(-1)^{|U|-1} \widetilde{\mathcal K}\left(R/\sum_{i\in U}I_{A_i};\mathbf t\right).\]
\end{corollary}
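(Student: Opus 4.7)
The plan is to deduce Corollary \ref{cor:kandtwistk} directly from Lemma \ref{lemma:hilbertaltsum} by clearing denominators. Recall that the $K$-polynomial of a graded module $M$ is, by definition, $\mathcal K(M;\mathbf t) = \hilb(M;\mathbf t)\cdot \prod_{i\in[m]}(1-\mathbf t^{\deg(z_i)})$. So my first step would be to take the identity
\[
\hilb(R/J;\mathbf t) = \sum_{\emptyset \neq U \subseteq [k]} (-1)^{|U|-1}\hilb\!\left(R/\sum_{i\in U} I_{A_i};\mathbf t\right)
\]
from Lemma \ref{lemma:hilbertaltsum} and multiply through by the common denominator $\prod_{i \in [m]}(1-\mathbf t^{\deg(z_i)})$. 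Applying the definition of the $K$-polynomial term-by-term on both sides yields the first formula
\[
\mathcal K(R/J;\mathbf t) = \sum_{\emptyset \neq U \subseteq [k]} (-1)^{|U|-1}\mathcal K\!\left(R/\sum_{i\in U} I_{A_i};\mathbf t\right).
\]
The only thing needed to justify this step is that each ideal $\sum_{i \in U}I_{A_i}$ is still homogeneous with respect to the grading so that its $K$-polynomial is defined; this holds because each $I_{A_i}$ is generated by minors, which are homogeneous in the standard multigradings on $S$ (and, by Proposition \ref{prop:idealsum}, the sum is again an ASM ideal).

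For the second formula, I would then perform the substitution $\mathbf t \mapsto \mathbf 1 - \mathbf t$ on both sides of the $K$-polynomial identity just derived. By definition, $\widetilde{\mathcal K}(M;\mathbf t) = \mathcal K(M; \mathbf 1 - \mathbf t)$, so the left-hand side becomes $\widetilde{\mathcal K}(R/J;\mathbf t)$ and each summand on the right becomes $\widetilde{\mathcal K}(R/\sum_{i\in U}I_{A_i};\mathbf t)$, giving the claimed identity.

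Because this is stated as an immediate corollary and the authors already did the genuine work (reducing to monomial ideals via the antidiagonal Gröbner degeneration in Proposition \ref{prop:antiDiagInit} and inclusion-exclusion on Hilbert series in Lemma \ref{lemma:hilbertaltsumprimer}), there is essentially no obstacle here. The only subtlety worth flagging is verifying that the alternating sum is really an identity of polynomials rather than merely of rational functions: since each $\mathcal K(R/I;\mathbf t)$ is a well-defined polynomial in $\mathbb{Z}[t_1,\ldots,t_d]$ and the denominator $\prod_i(1-\mathbf t^{\deg(z_i)})$ is a nonzerodivisor in the ring of formal power series, the cancellation in Lemma \ref{lemma:hilbertaltsum} is legitimate. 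I would simply include a one-sentence remark to that effect.
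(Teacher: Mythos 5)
Your proposal is correct and matches the paper's treatment: the paper presents this as an immediate consequence of \cref{lemma:hilbertaltsum}, obtained exactly as you describe by clearing the common denominator $\prod_{i\in[m]}(1-\mathbf t^{\deg(z_i)})$ to pass from Hilbert series to $K$-polynomials and then substituting $\mathbf t \mapsto \mathbf 1 - \mathbf t$ for the twisted version. Your added remarks on homogeneity of the sums $\sum_{i\in U} I_{A_i}$ and on the cancellation being legitimate are fine but not needed beyond what the paper leaves implicit.
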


\subsection{Schubert and Grothendieck polynomials of ASM varieties}
\label{subsection:schubertandgrothendieck}

We now specialize $K$-polynomials to the modules and gradings we are interested in in this paper.
Given $S=\field [z_{i,j} : i,j\in[n]]$ we consider the $\mathbb{N}^{2n}$-grading given by 
\begin{equation*}
    \deg(z_{i,j})=e_i+e_{n+j}.
\end{equation*}
Note then that $\mathbf{t}^{\deg(z_{i,j})}=t_it_{n+j}$.
It is worthwhile to write these $K$-polynomials as elements of $\Z[x_1,\ldots,x_n,y_1,\ldots,y_n]$, where the variables are given by 
\begin{equation*}
    x_1=t_1,\ldots,x_n=t_n \quad\text{and}\quad y_1=t_{n+1},\ldots,y_n=t_{2n}.
\end{equation*}
Under this convention we have that $(\mathbf{x},\mathbf{y})^{\deg(z_{i,j})}=x_iy_j$.

Given $A\in \asm(n)$ we define $\mathfrak G_A(\mathbf x,\mathbf y)=\tilde{\mathcal K}(S/I_A;\mathbf x,\mathbf y)$ and call it the \newword{double ASM Grothendieck polynomial}.  We write $\mathfrak G_A(\mathbf x)=\mathfrak G_A(\mathbf x,\mathbf 0)$ for the \newword{(single) ASM Grothendieck polynomial} i.e., the result of specializing the $y_i$'s to $0$.  Likewise, we define $\mathfrak S_A(\mathbf x,\mathbf y)=\mathcal C(S/I_A;\mathbf x,-\mathbf y)$ to be the \newword{double ASM Schubert polynomial} and $\mathfrak S_A(\mathbf x)=\mathfrak S_A(\mathbf x,\mathbf 0)$ to be the \newword{single ASM Schubert polynomial}.  Note that, equivalently, $\mathfrak G_A(\mathbf x)$ and $\mathfrak S_A(\mathbf x)$ are the twisted $K$-polynomial and multidegree, respectively, of $S/I_A$ with respect to the grading $\deg(z_{i,j})=e_i$, again letting $t_i=x_i$ for $i\in [n]$.

We discussed the role of Schubert polynomials in intersection theory in \cref{sec:intro}, where we recalled that expanding products of Schubert polynomials as sums of other Schubert polynomials records complete information about the cohomology of the complete flag variety. From this perspective, it is clear that distinct permutations give rise to distinct Schubert polynomials.  Correspondingly, distinct matrix Schubert varieties give rise to distinct Schubert polynomials in their capacity as multidegrees (see \cite[Section 3.2]{KM05}).

By contrast, ASM Schubert polynomials cannot reliably see enough of an ASM variety to be as complete a tool.  Specifically, because multidegrees are additive over top-dimensional components of a variety \cite[Theorem 1.7.1]{KM05}, they necessarily fail to record any information about components of other dimensions.  \cref{ex:schubert-forgets-small-component} includes an example of two ASM varieties (one of which is a component of the other), which share the same ASM Schubert polynomial.   
It is to accommodate this type of failure of sensitivity of cohomology that we are motivated to study the K-theory classes of ASM varieties, specifically their ASM Grothendieck polynomials.  One goal of this section is to show that distinct ASMs have distinct ASM Grothendieck polynomials (\cref{prop:equal_grothendiecks_means_equal_ASMs}).  More generally, we aim to build the case that ASM Grothendieck polynomials are often a more suitable tool for studying ASM varieties than ASM Schubert polynomials are, even in settings when Schubert polynomials are an excellent tool for studying matrix Schubert varieties.  In order to do this, we will now study divided difference operators.

\subsection{ASM varieties via Schubert and Grothendieck polynomials}
\label{sec:divided-differences}

For permutations, there is an equivalent definition of Grothendieck polynomials in terms of divided difference operators.  In this subsection, we will show that these divided difference relations also hold for ASM Grothendieck polynomials.  The definition of Grothendieck polynomials in terms of divided differences is important in its capacity as the original method for producing explicit representatives of classes in the K-theory of Schubert varieties.  It is also a valuable perspective for proving modern results (see, e.g., \cite{Wei21, BS22, BFHTW23}).  Moreover, divided differences are the tool that is used to implement Grothendieck polynomials for permutations computationally in Macaulay2 \cite{AGH+25}.

 Let \[\mathbb Z[\mathbf x,\mathbf y]=\mathbb Z[x_1,\ldots, x_n, y_1,y_2,\ldots, y_n].\] There is an action of $S_n$ on $\mathbb Z[\mathbf x,\mathbf y]$ which permutes the indices of the variables $x_i$ in the usual way and fixes each $y_i$.  Precisely, given $f\in\mathbb Z[\mathbf x,\mathbf y]$ and $w\in S_n$ we define 
 \[w\cdot f=f(x_{w(1)},\ldots, x_{w(n)},y_1,\ldots,y_n).\]  

 The \newword{divided difference operator} $\delta_i$ acts on $\mathbb Z[\mathbf x,\mathbf y]$ as follows: Given $f\in \mathbb Z[\mathbf x,\mathbf y]$, 
 \[\delta_i(f)=\frac{f-s_i\cdot f}{x_i-x_{i+1}}.\] Note that $\delta_i(f) \in \mathbb Z[\mathbf x,\mathbf y]$.  We also define the \newword{K-theoretic divided difference operator} $\pi_i$ by $\pi_i(f)=\delta_i(1-x_{i+1}f)$.
 
 Divided difference operators satisfy the same braid and commutation relations as do the simple reflections in the symmetric group, i.e.,
 \begin{itemize}
     \item $\delta_i\delta_j=\delta_j\delta_i$ and $\pi_i\pi_j=\pi_j\pi_i$ if $|i-j|>1$ 
     \item $\delta_i\delta_{i+1}\delta_i=\delta_{i+1}\delta_i\delta_{i+1}$ and $\pi_i\pi_{i+1}\pi_i=\pi_{i+1}\pi_i\pi_{i+1}$.
 \end{itemize}
 Additionally, $\delta_i^2=0$ and $\pi_i^2=\pi_i$.

 Write $w_0=n \, n-1 \, \ldots \, 1 \in S_n$.  Recall that, for $w \in S_n$, if $w>ws_i$ then $\pi_i(w) = ws_i$.  We record the usual characterizations of Schubert and Grothendieck polynomials in terms of divided difference operators.

 \begin{theorem}
 \label{theorem:grothendieckandschubertrecurrence}
 
 \underline{}
 
     \begin{enumerate}
         \item $\mathfrak G_{w_0}(\mathbf x,\mathbf y)=\prod_{\substack{i, j \geq 1 \\ i+j\leq n}} x_i+y_j-x_iy_j$ and $\mathfrak S_{w_0}(\mathbf x,\mathbf y)=\prod_{\substack{i, j \geq 1 \\ i+j\leq n}}x_i-y_j$. 
         \item If $w\in S_n$  then \[\mathfrak G_{\pi_i(w)}(\mathbf x,\mathbf y)=\pi_i(\mathfrak G_w(\mathbf x,\mathbf y))\] and  \[\mathfrak G_{\pi_i(w)}(\mathbf x)=\pi_i(\mathfrak G_w(\mathbf x)).\]
         \item If $w\in S_n$ so that $w>ws_i$, then   \[\mathfrak S_{\pi_i(w)}(\mathbf x,\mathbf y)=\delta_i(\mathfrak S_{w}(\mathbf x,\mathbf y))\]
         and \[\mathfrak S_{\pi_i(w)}(\mathbf x)=\delta_i(\mathfrak S_{w}(\mathbf x)).\]
         \item If $w\in S_n$ so that $w<ws_i$, then 
          \[\delta_i(\mathfrak S_{w}(\mathbf x,\mathbf y))=0=\delta_i(\mathfrak S_{w}(\mathbf x)).\]
     \end{enumerate}
     
 \end{theorem}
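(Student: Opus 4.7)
The plan is to treat this theorem as a synthesis of well-established facts rather than a new result: Parts (2)--(4) are the classical Lascoux--Sch\"utzenberger divided difference recurrences for (double) Schubert and Grothendieck polynomials, and the geometric/multidegree incarnation was established by Knutson--Miller \cite{KM05}. So the bulk of the work is to verify Part (1) directly as the base case of these recurrences, and then to cite Knutson--Miller together with the standard divided difference recurrences.

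For Part (1), I would first identify the ideal $I_{w_0}$ explicitly. The permutation $w_0$ has rank function $\rk_{w_0}(i,j)=\max(0,i+j-n)$, so whenever $i+j\le n$ the essential rank condition $\rk(Z_{[i],[j]})\le 0$ forces $z_{a,b}=0$ for all $a\le i$, $b\le j$. Tracing through the essential set (or just \cref{lem:Ess(A)ByRanks}) yields
\[
I_{w_0}=(z_{i,j}:i+j\le n),
\]
which is generated by a regular sequence of variables. Under the bigrading $\deg(z_{i,j})=e_i+e_{n+j}$, i.e., $\mathbf{(x,y)}^{\deg(z_{i,j})}=x_iy_j$, the $K$-polynomial of $S/(z_{i,j})$ is $1-x_iy_j$, so its twist is $1-(1-x_i)(1-y_j)=x_i+y_j-x_iy_j$. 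Multiplicativity of (twisted) $K$-polynomials over regular sequences then gives
\[
\mathfrak G_{w_0}(\mathbf x,\mathbf y)=\prod_{i+j\le n}(x_i+y_j-x_iy_j).
\]
The analogous multidegree calculation (the multidegree of $S/(z_{i,j})$ is $x_i+y_j$, and $\mathfrak S_A(\mathbf x,\mathbf y)=\mathcal C(S/I_A;\mathbf x,-\mathbf y)$) yields $\mathfrak S_{w_0}(\mathbf x,\mathbf y)=\prod_{i+j\le n}(x_i-y_j)$.

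For Parts (2)--(4), the plan is to invoke \cite[Theorem A]{KM05}, which identifies $\widetilde{\mathcal K}(S/I_w;\mathbf x,\mathbf y)$ and $\mathcal C(S/I_w;\mathbf x,-\mathbf y)$ with the classical (double) Grothendieck and Schubert polynomials $\mathfrak G_w$ and $\mathfrak S_w$ of Lascoux--Sch\"utzenberger, respectively. Those classical polynomials are defined so as to satisfy the divided difference recurrences stated in Parts (2)--(4), with base case $w_0$ matching the formulas verified in Part (1). In particular, for $w \in S_n$ with $w>ws_i$, applying $\delta_i$ to a reduced-word expression of $\mathfrak S_w$ in terms of $\mathfrak S_{w_0}$ yields $\mathfrak S_{ws_i}=\mathfrak S_{\pi_i(w)}$, and when $w<ws_i$ the polynomial $\mathfrak S_w$ is symmetric in $x_i,x_{i+1}$, forcing $\delta_i\mathfrak S_w=0$. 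The analogous statement for $\mathfrak G$ and $\pi_i$ is the same story with the $K$-theoretic operator.

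There is no real obstacle to this proof: Part (1) is a direct complete intersection computation, and Parts (2)--(4) are quotations from \cite{KM05,LS82}. The only thing to be careful about is bookkeeping the twist, the sign convention $\mathbf y\mapsto -\mathbf y$ that connects multidegree to the classical double Schubert polynomial, and the matrix-versus-function convention $M_{vw}=M_wM_v$ recalled in \cref{subsect:preliminaries-definitions}, so that the divided differences act in the correct variable and match the covering relations of weak order on $S_n$. Once these conventions are fixed, Parts (2)--(4) are immediate from the cited works and Part (1) is the explicit base case.
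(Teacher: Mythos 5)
Your proposal is correct and matches the paper's treatment: the paper does not prove this theorem but simply remarks that these recurrences are often taken as the definition of (double) Schubert and Grothendieck polynomials and that their equivalence with the $K$-polynomial/multidegree definitions is \cite[Theorem A]{KM05} (see also Buch and Feh\'er--Rim\'anyi), which is exactly the citation you rely on. Your explicit verification of Part (1) via $I_{w_0}=(z_{i,j}:i+j\le n)$ and the complete-intersection computation of the twisted $K$-polynomial and multidegree is correct, but it is extra detail the paper subsumes in the same citation.
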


The statements of \cref{theorem:grothendieckandschubertrecurrence} are often taken as a definition.  That they are equivalent to the definitions we give in terms of $K$-polynomials and multidegrees follows from \cite[Theorem A]{KM05}.  See also \cite{Buc02,FR03}.
 
The primary goal of the remainder of this section is to show that ASM Grothendieck polynomials satisfy divided difference recurrences that are compatible with weak order on ASMs.  Along the way, we will contrast the behavior of ASM Grothendieck polynomials with that of ASM Schubert polynomials.

\begin{lemma}
\label{lem:asmpermjoingoesup}
    Let $A\in \asm(n)$ and assume that $A\not \in S_n$.  Write $\perm(A)=\{w_1,\ldots,w_k\}$.  Given $U\subseteq [k]$ with $U\neq \emptyset$,
    \[A<\bigvee_{i\in U}w_i.\]  Equivalently, $X_A\supsetneq \bigcap_{i\in U} X_{w_i}$, or, algebraically, $I_A \subsetneq \sum_{i \in U} I_{w_i}$.
\end{lemma}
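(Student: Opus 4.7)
The approach is very direct: it only requires unpacking the definition of $\perm(A)$ together with properties of joins. The plan is first to establish the weak inequality $A \leq \bigvee_{i \in U} w_i$, which is immediate from the fact that each $w_i \in \perm(A)$ satisfies $w_i \geq A$ by the definition of $\perm(A)$; hence any upper bound, and in particular the join, lies above $A$.

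The main work is showing the strict inequality $A \neq \bigvee_{i \in U} w_i$. I would argue by contradiction. If $A = \bigvee_{i \in U} w_i$, then $A$ would itself be an upper bound of $\{w_i : i \in U\}$, giving $A \geq w_i$ for each $i \in U$. Combined with the already-observed $A \leq w_i$, this forces $A = w_i$ for every $i \in U$. Since the elements of $\perm(A)$ are pairwise distinct (indeed incomparable, by the minimality condition in the definition of $\perm(A)$), the only way this can happen is if $|U|=1$. But then $A = w_j$ for the unique $j \in U$, so $A \in S_n$, contradicting the hypothesis $A \notin S_n$.

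The equivalent reformulations $X_A \supsetneq \bigcap_{i \in U} X_{w_i}$ and $I_A \subsetneq \sum_{i \in U} I_{w_i}$ then follow immediately from \cref{prop:idealsum}, which identifies $I_{\bigvee_{i \in U} w_i} = \sum_{i \in U} I_{w_i}$ and $X_{\bigvee_{i \in U} w_i} = \bigcap_{i \in U} X_{w_i}$, together with \cref{proposition:asmorderfacts}, which translates strong order inequalities into ideal and variety containments.

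There is essentially no obstacle here; the only subtlety worth flagging is the necessity of the hypothesis $A \notin S_n$ to handle the $|U|=1$ case, since for $A = w \in S_n$ one has $\perm(A) = \{w\}$ and the singleton join would equal $A$ itself, making the strict inequality fail.
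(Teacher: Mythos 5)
Your proof is correct and is essentially the argument the paper leaves implicit (its proof is just ``immediate from the definition of $\perm(A)$''): each $w_i\geq A$ gives $A\leq\bigvee_{i\in U}w_i$, equality would force $A\geq w_i$ and hence $A=w_i\in S_n$, contradicting $A\notin S_n$, and the ideal/variety reformulations follow from \cref{prop:idealsum} and \cref{proposition:asmorderfacts}. The detour through $|U|=1$ is unnecessary (any single $i\in U$ with $A=w_i$ already contradicts $A\notin S_n$) but harmless.
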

\begin{proof}
   This is immediate from the definition of $\perm(A)$. 
\end{proof}

We now will prove that ASM Grothendieck polynomials satisfy a divided difference recurrence governed by weak order.

\begin{theorem}
\label{prop-DDO}
 Let $A\in \asm(n)$.  Then  $\pi_i(\mathfrak G_A(\mathbf x,\mathbf y))=\mathfrak G_{\pi_i(A)}(\mathbf x,\mathbf y)$, and $\pi_i(\mathfrak G_A(\mathbf x))=\mathfrak G_{\pi_i(A)}(\mathbf x)$.
\end{theorem}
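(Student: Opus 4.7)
The plan is to expand both $\mathfrak{G}_A$ and $\mathfrak{G}_{\pi_i(A)}$ as alternating sums of Grothendieck polynomials indexed by non-empty subsets of $\perm(A)$, and to show that these sums match term-by-term after applying the K-theoretic operator $\pi_i$. The expansions come from \cref{cor:kandtwistk}, and the matching of corresponding terms is controlled by \cref{prop:joindescentpart3}. A downward induction on strong order then reduces everything to the permutation case \cref{theorem:grothendieckandschubertrecurrence}(2).

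In more detail, write $\perm(A) = \{w_1, \ldots, w_k\}$. Since $I_A = \bigcap_{j \in [k]} I_{w_j}$ by \cref{prop:I_A-intersection-of-schubs-in-perm} and $\sum_{j \in U} I_{w_j} = I_{B_U}$ with $B_U := \bigvee_{j \in U} w_j$ by \cref{prop:idealsum}, \cref{cor:kandtwistk} yields
\[
\mathfrak{G}_A(\mathbf{x},\mathbf{y}) = \sum_{\emptyset \neq U \subseteq [k]} (-1)^{|U|-1} \mathfrak{G}_{B_U}(\mathbf{x},\mathbf{y}).
\]
On the other side, \cref{prop:intersectoperator} gives $I_{\pi_i(A)} = \bigcap_{j \in [k]} I_{\pi_i(w_j)}$, while \cref{prop:joindescentpart3} identifies $\bigvee_{j \in U} \pi_i(w_j)$ with $\pi_i(B_U)$, so a second application of \cref{cor:kandtwistk} produces
\[
\mathfrak{G}_{\pi_i(A)}(\mathbf{x},\mathbf{y}) = \sum_{\emptyset \neq U \subseteq [k]} (-1)^{|U|-1} \mathfrak{G}_{\pi_i(B_U)}(\mathbf{x},\mathbf{y}).
\]
Since the K-theoretic divided difference $\pi_i$ is $\mathbb{Z}$-linear on polynomials, applying it to the first identity reduces the desired equality to the family of identities $\pi_i(\mathfrak{G}_{B_U}) = \mathfrak{G}_{\pi_i(B_U)}$ for every non-empty $U \subseteq [k]$.

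I would close the argument with downward induction on strong order within $\asm(n)$. The base case consists of permutations $A \in S_n$, for which the statement is exactly \cref{theorem:grothendieckandschubertrecurrence}(2). For the inductive step with $A \notin S_n$ and any non-empty $U \subseteq [k]$: if $|U| = 1$, then $B_U$ is a permutation in $\perm(A)$ and the permutation case applies, while if $|U| \geq 2$, then $B_U > A$ strictly in strong order by \cref{lem:asmpermjoingoesup}, so the inductive hypothesis delivers $\pi_i(\mathfrak{G}_{B_U}) = \mathfrak{G}_{\pi_i(B_U)}$. The single-variable identity $\pi_i(\mathfrak{G}_A(\mathbf{x})) = \mathfrak{G}_{\pi_i(A)}(\mathbf{x})$ then follows either by specializing $\mathbf{y} = \mathbf{0}$ or by repeating the argument with the grading $\deg(z_{i,j}) = e_i$.

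The main conceptual obstacle, rather than any technical computation, is recognizing that the two inclusion--exclusion sums are genuinely term-by-term comparable: this relies on \cref{prop:joindescentpart3} to identify the ASM $\bigvee_{j \in U} \pi_i(w_j)$ with $\pi_i(\bigvee_{j \in U} w_j)$, so that the same index set $U$ parametrizes compatible terms on both sides. Without this commutation of joins with the weak-order operator, the two IE expansions would not be aligned and the inductive reduction would not close.
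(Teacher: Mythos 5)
Your proposal is correct and follows essentially the same route as the paper's proof: the inclusion--exclusion expansion from \cref{cor:kandtwistk} over nonempty subsets of $\perm(A)$, the identification $\bigvee_{j\in U}\pi_i(w_j)=\pi_i\bigl(\bigvee_{j\in U}w_j\bigr)$ via \cref{prop:joindescentpart3} together with \cref{prop:intersectoperator}, and a downward induction on strong order grounded in the permutation case of \cref{theorem:grothendieckandschubertrecurrence}, using \cref{lem:asmpermjoingoesup} to see that the joins lie strictly above $A$. The only difference is cosmetic (you align both expansions first and then match terms, whereas the paper applies $\pi_i$ and invokes the inductive hypothesis termwise), so no changes are needed.
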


\begin{proof}
If $A\in S_n$, the statement is true by \cref{theorem:grothendieckandschubertrecurrence}.  In particular the claim holds when $A=w_0$.  So fix $A\in \asm(n)-S_n$, and assume that $\pi_i(\mathfrak G_{A'}(\mathbf x,\mathbf y))=\mathfrak G_{\pi_i(A')}(\mathbf x,\mathbf y)$ for all $A'>A$.

Write $\perm(A)=\{w_1,\ldots,w_k\}$, and recall that $I_A=\bigcap_{j\in[k]}I_{w_j}$.  Given $\emptyset \neq U\subseteq [k]$, define $A_U=\bigvee_{j\in U} w_j$.     Thus by \cref{cor:kandtwistk},
    \begin{equation}
    \label{eq:inclusionexclusion}
        \mathfrak G_A(\mathbf x,\mathbf y)=\sum_{\emptyset \neq U\subseteq [k]} (-1)^{|U|-1}\mathfrak G_{A_U}(\mathbf x,\mathbf y).
    \end{equation}
    We apply $\pi_i$ to both sides of \cref{eq:inclusionexclusion} and obtain
    \begin{equation*}
        \pi_i(\mathfrak G_A(\mathbf x,\mathbf y))=\sum_{\emptyset \neq U\subseteq [k]} (-1)^{|U|-1}\pi_i(\mathfrak G_{A_U}(\mathbf x,\mathbf y)).
        \end{equation*}
        If $U\subseteq [k]$ with $U\neq \emptyset$, we have $A_U>A$ by \cref{lem:asmpermjoingoesup}.  Applying the inductive hypothesis, we have $\pi_i(\mathfrak G_{A_U}(\mathbf x,\mathbf y))=\mathfrak G_{\pi_i(A_U)}(\mathbf x,\mathbf y)$ for each such $U$.  Therefore,
        \begin{equation}\label{eq:pi_i(G)-expanded}
        \pi_i(\mathfrak G_A(\mathbf x,\mathbf y))=\sum_{\emptyset \neq U\subseteq [k]} (-1)^{|U|-1}\mathfrak G_{\pi_i(A_U)}(\mathbf x,\mathbf y).
        \end{equation}
Because $A_U=\bigvee_{j\in U} w_j$, we have $I_{A_U}=\sum_{j\in U} I_{w_j}$ by \cref{prop:idealsum}.  Then by \cref{prop:joindescentpart3}, $I_{\pi_i(A_U)}=\sum_{j\in U} I_{\pi_i(w_j)}$. By \cref{prop:intersectoperator}, $I_{\pi_i(A)} = \bigcap_{j \in [k]} I_{\pi_i(w_j)}$.  Hence, by \cref{cor:kandtwistk}, \begin{equation}\label{eq:G_pi_i-expanded}
     \mathfrak G_{\pi_i(A)}(\mathbf x,\mathbf y)=\sum_{\emptyset \neq U\subseteq [k]} (-1)^{|U|-1}\mathfrak G_{\pi_i(A_U)}(\mathbf x,\mathbf y).
\end{equation}
The result follows by combining \cref{eq:G_pi_i-expanded} and \cref{eq:pi_i(G)-expanded}.

The statement for single ASM Grothendieck polynomials follows by replacing the double ASM Grothendieck polynomials with the single versions throughout the proof or by specializing the double ASM Grothendieck polynomials appropriately.
\end{proof}

Having proved \cref{prop-DDO}, we are now prepared to prove that distinct ASMs have distinct ASM Grothendieck polynomials.

\begin{lemma}
\label{lemma:kpolynomialequal}
    Let $A,B\in \asm(n)$ so that $A\leq B$.  If $\mathfrak G_A(\mathbf x,\mathbf y)=\mathfrak G_B(\mathbf x,\mathbf y)$ then $A=B$.  Likewise, if $\mathfrak G_A(\mathbf x)=\mathfrak G_B(\mathbf x)$ then $A=B$.
\end{lemma}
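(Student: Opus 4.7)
The hypothesis $A \leq B$ gives $I_A \subseteq I_B$ via \cref{proposition:asmorderfacts}, so there is a short exact sequence of $\mathbb{N}^{2n}$-graded $S$-modules
\[
0 \to I_B/I_A \to S/I_A \to S/I_B \to 0.
\]
Under the grading $\deg(z_{i,j}) = e_i + e_{n+j}$, each module appearing here has finite-dimensional graded components, so Hilbert series are additive over this sequence, and hence so are $K$-polynomials. Because the substitution $(\mathbf x, \mathbf y) \mapsto (\mathbf 1 - \mathbf x, \mathbf 1 - \mathbf y)$ is a $\mathbb Z$-algebra involution of $\mathbb Z[\mathbf x,\mathbf y]$, twisted $K$-polynomials are additive too.

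With that setup in hand, the plan for the double case is the following. The assumption $\mathfrak G_A(\mathbf x, \mathbf y) = \mathfrak G_B(\mathbf x, \mathbf y)$ yields $\widetilde{\mathcal K}(I_B/I_A;\mathbf x, \mathbf y) = 0$; applying the (invertible) twist produces $\mathcal K(I_B/I_A; \mathbf x, \mathbf y) = 0$, and dividing by the nonzero denominator $\prod_{i,j}(1 - x_i y_j)$ shows $\hilb(I_B/I_A; \mathbf x, \mathbf y) = 0$. Since every coefficient of a Hilbert series is the nonnegative integer $\dim_\field((I_B/I_A)_{(\mathbf a,\mathbf b)})$, this forces $I_B/I_A = 0$, so $I_A = I_B$, and \cref{proposition:asmorderfacts} then gives $A = B$.

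For the single-variable statement I would rerun the argument with respect to the coarser $\mathbb N^n$-grading $\deg(z_{i,j}) = e_i$ identified in the paper, for which $\mathfrak G_A(\mathbf x)$ is itself the twisted $K$-polynomial of $S/I_A$. The only point needing verification is that graded pieces remain finite-dimensional in the coarser grading: a monomial of degree $\mathbf a$ is a choice, for each $i$, of a size-$a_i$ multiset from $\{z_{i,1},\ldots,z_{i,n}\}$, so $\dim_\field S_{\mathbf a} = \prod_{i}\binom{a_i+n-1}{n-1} < \infty$. Once this observation is in place, the SES-and-additivity argument goes through unchanged. There is no substantial obstacle; the content of the lemma is really just that the Hilbert series of a finitely generated graded module detects nonvanishing, combined with the bijectivity of the substitution defining the twist.
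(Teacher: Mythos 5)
Your proposal is correct and follows essentially the same route as the paper: both arguments boil down to the facts that the (twisted) $K$-polynomial determines the Hilbert series and that nested ideals $I_A\subseteq I_B$ with equal Hilbert series must coincide, with your short exact sequence simply making that last standard step explicit. The only cosmetic difference is that you treat the double case directly in the $\mathbb{N}^{2n}$-grading, whereas the paper first specializes $\mathbf y\mapsto \mathbf 0$ and argues only in the coarser grading; both are fine.
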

\begin{proof}
     If $\mathfrak G_A(\mathbf x,\mathbf y)=\mathfrak G_B(\mathbf x,\mathbf y)$, then $\mathfrak G_A(\mathbf x)=\mathfrak G_B(\mathbf x)$.  If $\mathfrak G_A(\mathbf x)=\mathfrak G_B(\mathbf x)$, then we have $\hilb(S/I_A;\mathbf x)=\hilb(S/I_B;\mathbf x)$.  Since $A\leq B$ implies $I_A\subseteq I_B$, the equality of Hilbert series implies we have equality of ideals as well.  Hence $A=B$.
\end{proof}

\begin{proposition}
\label{prop:equal_grothendiecks_means_equal_ASMs}
    Let $A,B\in \asm(n)$.  The following are equivalent
    \begin{enumerate}
        \item $A=B$.
        \item $\mathfrak G_A(\mathbf x,\mathbf y)=\mathfrak G_B(\mathbf x,\mathbf y)$.
        \item $\mathfrak G_A(\mathbf x)=\mathfrak G_B(\mathbf x)$.
    \end{enumerate} 
\end{proposition}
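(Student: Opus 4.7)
The implications $(1) \Rightarrow (2)$ and $(2) \Rightarrow (3)$ are immediate, the latter by specializing $\mathbf y = \mathbf 0$. For the substantive direction $(3) \Rightarrow (1)$, my plan is to induct on $c := \codim(X_A)$, which equals $\codim(X_B)$ since the codimension can be read off as the lowest total degree appearing in the twisted $K$-polynomial of an ASM ideal. In the base case $c = 0$ we have $I_A = I_B = 0$, and the only ASM with trivial ideal is the identity, so $A = B = e$.

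For the inductive step with $c \geq 1$, I would apply \cref{cor:some-pi-drops-codim} to find an index $i \in [n-1]$ with $\codim(X_{\pi_i(A)}) = c - 1$. By \cref{prop-DDO}, the equality of Grothendieck polynomials is preserved under $\pi_i$, giving $\mathfrak G_{\pi_i(A)}(\mathbf x) = \pi_i(\mathfrak G_A(\mathbf x)) = \pi_i(\mathfrak G_B(\mathbf x)) = \mathfrak G_{\pi_i(B)}(\mathbf x)$, and a fortiori $\codim(X_{\pi_i(B)}) = c - 1$. The inductive hypothesis then yields $\pi_i(A) = \pi_i(B)$, which by \cref{def:pi_i} means $\rk_A$ and $\rk_B$ agree at every position $(a, b)$ with $a \neq i$. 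If there exist two distinct indices $i_1, i_2 \in [n-1]$ that each drop the codimension, applying this argument to both forces $\rk_A = \rk_B$ everywhere (since every $a \in [n]$ satisfies $a \neq i_1$ or $a \neq i_2$), and hence $A = B$.

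The remaining subcase, which I expect to be the main obstacle, is when exactly one index $i$ drops the codimension; equivalently, every minimum-length element of $\perm(A)$ and of $\perm(B)$ is Grassmannian with unique descent at $i$. In this situation $A$ and $B$ both lie in the fiber of $\pi_i$ over $C := \pi_i(A) = \pi_i(B)$ and may differ only in row-$i$ rank data. This fiber need not be a chain in strong order, so \cref{lemma:kpolynomialequal} does not apply directly. My plan for this subcase is to use the Grassmannian structure of the top-dimensional components of $X_A$ and $X_B$, together with the full strength of the hypothesis $\mathfrak G_A(\mathbf x) = \mathfrak G_B(\mathbf x)$, to force $A$ and $B$ into a common strong-order chain, at which point \cref{lemma:kpolynomialequal} finishes. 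Concretely, the multidegree $\mathfrak S_A(\mathbf x) = \mathfrak S_B(\mathbf x)$ (the lowest-degree term of the common Grothendieck) expands uniquely as a sum of Schur polynomials indexed by the minimum-length Grassmannian permutations in $\perm(A)$ and $\perm(B)$; linear independence of Schubert polynomials identifies these multisets. Combining this with the higher-degree terms of the Grothendieck should pin down the row-$i$ rank data and complete the identification $A = B$.
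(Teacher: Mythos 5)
The easy implications and the early reductions in your argument are fine: reading $\codim$ off the lowest degree of the twisted $K$-polynomial is legitimate (via \cref{prop:asmpolyschubertexpansion} and linear independence of Schubert polynomials), \cref{cor:some-pi-drops-codim} plus \cref{prop-DDO} and the inductive hypothesis do give $\pi_i(A)=\pi_i(B)$ for every codimension-dropping index $i$, and two such indices indeed force $\rk_A=\rk_B$. The problem is the remaining subcase, which you explicitly leave as a sketch (``should pin down the row-$i$ rank data''). Identifying the minimum-length elements of $\perm(A)$ and $\perm(B)$ via the multidegree only pins down the top-dimensional components of $X_A$ and $X_B$; it says nothing about the lower-dimensional components, and extracting those from ``the higher-degree terms of the Grothendieck'' is essentially the statement you are trying to prove, so this is a genuine gap rather than a routine verification.

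Moreover, your choice of induction makes that subcase strictly harder than it needs to be. Because you induct on codimension, you can only exploit indices $i$ with $\codim(X_{\pi_i(A)})=c-1$, i.e.\ descents of \emph{minimum-length} elements of $\perm(A)$ (\cref{cor:stepdowncodimbyatmost1}); descents of $A$ that do not drop codimension give you nothing. The paper instead takes a counterexample with $A$ minimal in \emph{strong} order, so that $\pi_i(A)=\pi_i(B)$ holds for \emph{every} descent $i$ of $A$. Then ``exactly one descent'' means all essential cells of $A$ lie in row $i$, whence $A$ is a join of bigrassmannians with descent $i$ and, by the lattice property of Grassmannian permutations with a fixed descent, $A$ is itself a Grassmannian permutation; a direct corner-sum comparison then gives $A\ge B$, and \cref{lemma:kpolynomialequal} finishes. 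In your setting $A$ may have several descents (only one of which drops codimension), so $A$ need not be Grassmannian or even a permutation, and this structural shortcut is unavailable. The natural repair is to replace induction on codimension by the strong-order minimal-counterexample argument (or a double induction refining codimension by strong order), at which point your outline collapses onto the paper's proof.
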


\begin{proof} The implications (1) implies (2) implies (3) are trivial.

To see (3) implies (1), fix $A$ and $B$ satisfying $\mathfrak G_A(\mathbf x)=\mathfrak G_B(\mathbf x)$.  Suppose for contradiction that $A \neq B$, and assume that $A$ is minimal in strong order among counterexamples.  We will work by considering the set of descents of $A$, which either has at least two elements, exactly one element, or no elements.

Given $i\in [n-1]$, we have, by \cref{prop-DDO}, \[
\mathfrak G_{\pi_i(A)}(\mathbf x) =\pi_i( \mathfrak G_A(\mathbf x))  = \pi_i(\mathfrak G_B(\mathbf x)) =\mathfrak G_{\pi_i(B)}(\mathbf x).
\]

If $\pi_i(A)<A$, then by the minimality assumption on $A$, we must have $\pi_i(A)=\pi_i(B)$.  In particular, this implies $\rk_A(a,b)=\rk_B(a,b)$ for all $a,b\in[n]$ with $a\neq i$.  If $A$ has at least two distinct descents, then we have $\rk_A(a,b)=\rk_B(a,b)$ for all $a,b\in [n]$ which implies $A=B$.

Now suppose $A$ has exactly one descent, say in row $i$.  Then all of the essential cells of $A$ sit in row $i$, which implies that $A$ is the join of bigrassmannian permutations with descent $i$.  In particular, these are Grassmannian permutations (i.e., permutations with at most one descent).  It is a standard fact that the restriction of $S_n$ to the set of Grassmannian permutations whose unique descent is at $i$, together with the identity, forms a lattice under strong order (see, e.g., \cite[Section 2.4]{BB05}).  Thus, $A$ must actually be a Grassmannian permutation whose unique descent is $i$. 

We claim that $A \geq B$.  Because $\rk_A(a,b) = \rk_B(a,b)$ for all $a \neq i$, it suffices to show that $\rk_A(i,b) \leq \rk_B(i,b)$ for all $b \in [n]$.  Because $A \in S_n$, there exists a unique $j$ so that $A_{i,j} \neq 0$.  If $b<j$, then $\rk_A(i,b) = \rk_A(i-1,b) = \rk_B(i-1,b)$.  By \cref{lemma:cornerincrease}, $\rk_B(i,b) \geq \rk_B(i-1,b)  = \rk_A(i-1,b)$.  

Now consider $b \geq j$.  Because $A$ is a Grassmannian permutation with descent $i$, the unique nonzero entry of $A$ in row $i+1$ is to the left of column $j$.  Hence, $\rk_A(i,b) = \rk_A(i+1,b)-1 = \rk_B(i+1,b)-1$.  By \cref{lemma:cornerincrease}, $\rk_B(i,b) \geq \rk_B(i+1,b)-1 = \rk_A(i,b)$.  Thus, $A \geq B$, and so the result follows from \cref{lemma:kpolynomialequal} (or by the assumption that $A$ was chosen minimally).

Finally, if $A$ has no descents, then $A$ is the identity in $S_n$, and so $A \leq B$ and the result follows from \cref{lemma:kpolynomialequal}.
\end{proof}

\begin{remark}
    We note that the analog of \cref{prop:equal_grothendiecks_means_equal_ASMs} does not hold for twisted K-polynomials with respect to the standard $\mathbb N$-grading on $S=\field[z_{i,j}]$.  In particular, \[
    \mathfrak G_{w}(t,t,\ldots, t,0, 0, \ldots 0)=\mathfrak G_{w^{-1}}(t,t,\ldots, t,0, 0, \ldots 0)
    \] for all $w\in S_n$.
\end{remark}

Next we will show that a result analogous to \cref{prop-DDO} holds for ASM Schubert polynomials exactly when $\codim(X_{\pi_i(A)})<\codim(X_A)$.  The contrast between \cref{prop-DDO} and \cref{thm:codim-drops-iff-del-nontrivial-on-schub} gives an example of the K-theory behaving more predictably with respect to ASMs than cohomology, a consequence of the potential of ASM varieties to fail to be equidimensional.  We begin by recording a proposition.

\begin{proposition}
\label{prop:asmpolyschubertexpansion}
 Given $A\in \asm(n)$,   we have the following equalities of ASM Schubert polynomials: \[\mathfrak S_A(\mathbf x,\mathbf y)=\sum_{\substack{w\in \perm(A)\\\ell(w)=\codim(X_A)}}\mathfrak S_w(\mathbf x,\mathbf y)\] and
  \[\mathfrak S_A(\mathbf x)=\sum_{\substack{w\in \perm(A)\\\ell(w)=\codim(X_A)}}\mathfrak S_w(\mathbf x).\] 
\end{proposition}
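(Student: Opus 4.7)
The strategy is to apply the inclusion-exclusion identity of \cref{cor:kandtwistk} to the minimal prime decomposition $I_A = \bigcap_{w\in\perm(A)} I_w$ from \cref{prop:I_A-intersection-of-schubs-in-perm}, and then extract the homogeneous component of lowest polynomial degree, which by definition produces the multidegree.

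Write $\perm(A) = \{w_1,\ldots,w_k\}$ and, for each $\emptyset \neq U \subseteq [k]$, set $A_U = \bigvee_{i\in U} w_i$. By \cref{prop:idealsum}, $\sum_{i\in U} I_{w_i} = I_{A_U}$, so \cref{cor:kandtwistk} gives
\[
\widetilde{\mathcal K}(S/I_A;\mathbf x,\mathbf y) \;=\; \sum_{\emptyset \neq U \subseteq [k]} (-1)^{|U|-1}\,\widetilde{\mathcal K}(S/I_{A_U};\mathbf x,\mathbf y).
\]
Under the $\mathbb N^{2n}$-grading with $\deg z_{i,j}=e_i+e_{n+j}$, the multidegree $\mathcal C(S/I;\mathbf x,\mathbf y)$ is the sum of the lowest-degree terms of $\widetilde{\mathcal K}(S/I;\mathbf x,\mathbf y)$ and is itself homogeneous of polynomial degree $\codim(S/I)$; see \cite[Chapter 8]{MS05}. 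Thus the lowest-degree part of the left-hand side is $\mathcal C(S/I_A;\mathbf x,\mathbf y)$, which has degree $\codim(X_A)$.

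On the right-hand side, a singleton $U=\{w\}$ contributes a term whose lowest-degree part is $\mathcal C(S/I_w;\mathbf x,\mathbf y)$ of degree $\ell(w)$; these enter at the overall lowest degree $\codim(X_A)$ precisely when $\ell(w)=\codim(X_A)$. For $|U|\geq 2$, I next claim that $\codim(X_{A_U}) > \codim(X_A)$, so that $\widetilde{\mathcal K}(S/I_{A_U};\mathbf x,\mathbf y)$ contributes nothing in degree $\codim(X_A)$: by \cref{lem:asmpermjoingoesup} we have $A_U > A$ strictly; any $v\in\perm(A_U)$ satisfies $v\geq w_i$ for every $i\in U$, and if in addition $\ell(v)=\codim(X_A)$, then $v$ must coincide with some $w_{i_0}\in\perm(A)$, forcing $w_{i_0}\geq w_i$ for all $i\in U$, which by incomparability of elements of $\perm(A)$ collapses $U$ to $\{i_0\}$, a contradiction. (The case $A \in S_n$ is trivial since then $k=1$.)

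Extracting the degree-$\codim(X_A)$ parts of both sides yields
\[
\mathcal C(S/I_A;\mathbf x,\mathbf y) \;=\; \sum_{\substack{w\in\perm(A)\\ \ell(w)=\codim(X_A)}} \mathcal C(S/I_w;\mathbf x,\mathbf y),
\]
and substituting $\mathbf y \mapsto -\mathbf y$ gives the double ASM Schubert identity. The single-variable case follows by specializing $\mathbf y = \mathbf 0$. The main (and only real) obstacle is the strict codimension inequality $\codim(X_{A_U})>\codim(X_A)$ for $|U|\geq 2$, handled above; this is what allows the higher-order inclusion-exclusion terms to drop out and leaves the top-component formula, reflecting the standard additivity of multidegrees over top-dimensional components \cite[Theorem 1.7.1]{KM05}.
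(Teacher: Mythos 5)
Your argument is correct, but it is genuinely different from (and much longer than) the paper's proof, which simply observes that the statement is immediate from the additivity of multidegrees over top-dimensional components \cite[Theorem 1.7.1]{KM05} applied to the decomposition $X_A = \bigcup_{w \in \perm(A)} X_w$ with $\codim(X_A) = \min\{\ell(w) : w \in \perm(A)\}$ from \cref{prop:I_A-intersection-of-schubs-in-perm}: the top-dimensional components of $X_A$ are exactly the $X_w$ with $w \in \perm(A)$ and $\ell(w) = \codim(X_A)$, and the multidegree of $S/I_A$ is the sum of their multidegrees. What you do instead is re-derive this instance of additivity by hand: you run the inclusion–exclusion identity of \cref{cor:kandtwistk} over $\perm(A)$, note that the lowest-degree part of each twisted $K$-polynomial sits in degree equal to the codimension, and then show the higher-order terms drop out of the bottom degree via the strict inequality $\codim(X_{A_U}) > \codim(X_A)$ for $|U| \geq 2$. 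That inequality is correctly established: every $v \in \perm(A_U)$ satisfies $v \geq w_i$ for all $i \in U$, so $\ell(v) \geq \ell(w_i) \geq \codim(X_A)$, and equality throughout would force $v = w_i$ for all $i \in U$, contradicting that distinct elements of $\perm(A)$ are incomparable (this sharpens \cref{lem:asmpermjoingoesup} into a codimension statement). The trade-off is clear: the paper's citation is shorter and rests on a general geometric fact about multidegrees, while your version is self-contained within the toolkit the paper has already built (\cref{cor:kandtwistk}, \cref{prop:idealsum}, \cref{prop:I_A-intersection-of-schubs-in-perm}) and structurally parallels the proof of \cref{prop-DDO}, at the cost of reproving a special case of \cite[Theorem 1.7.1]{KM05}. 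One small point of care in your write-up: you should say explicitly that the signs work out because the only degree-$\codim(X_A)$ contributions on the right-hand side come from singletons $U$, which carry sign $(-1)^{|U|-1} = +1$; you implicitly use this when extracting the bottom-degree component, and it is worth a sentence.
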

\begin{proof}
    This is immediate from \cite[Theorem 1.7.1]{KM05} and \cite[Proposition 5.4]{Wei17}.
\end{proof}

\begin{theorem}\label{thm:codim-drops-iff-del-nontrivial-on-schub}
    Let $A\in \asm(n)$.  If $\codim(X_A)=\codim(X_{\pi_i(A)})$, then \[
    \delta_i(\mathfrak S_A(\mathbf x,\mathbf y))=0=\delta_i(\mathfrak S_A(\mathbf x)).
    \] Otherwise, $\delta_i(\mathfrak S_A(\mathbf x,\mathbf y))=\mathfrak S_{\pi_i(A)}(\mathbf x,\mathbf y)$, and $\delta_i(\mathfrak S_A(\mathbf x))=\mathfrak S_{\pi_i(A)}(\mathbf x)$.   
\end{theorem}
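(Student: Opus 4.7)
The plan is to apply the divided difference operator $\delta_i$ directly to the Schubert expansion given in \cref{prop:asmpolyschubertexpansion} and then use the results from \cref{sec:divided-differences} together with \cref{cor:stepdowncodimbyatmost1} and \cref{cor:perm-set-of-pi_i(A)-when-codim-drops} to identify the surviving terms with the Schubert expansion of $\mathfrak S_{\pi_i(A)}$. Since $\delta_i$ is linear, \cref{prop:asmpolyschubertexpansion} gives
\[
\delta_i(\mathfrak S_A(\mathbf x,\mathbf y)) \;=\; \sum_{\substack{w \in \perm(A) \\ \ell(w) = \codim(X_A)}} \delta_i(\mathfrak S_w(\mathbf x,\mathbf y)),
\]
and by \cref{theorem:grothendieckandschubertrecurrence} each term equals $\mathfrak S_{ws_i}(\mathbf x,\mathbf y)$ when $w>ws_i$ and vanishes otherwise.

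For the first claim, suppose $\codim(X_{\pi_i(A)}) = \codim(X_A)$. By the contrapositive of \cref{cor:stepdowncodimbyatmost1}, no $w \in \perm(A)$ with $\ell(w) = \codim(X_A)$ has $i$ as a descent, so every term in the sum above is zero. This gives $\delta_i(\mathfrak S_A(\mathbf x,\mathbf y))=0$, and specializing $\mathbf y \to \mathbf 0$ gives $\delta_i(\mathfrak S_A(\mathbf x))=0$.

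For the second claim, suppose instead $\codim(X_{\pi_i(A)}) = \codim(X_A)-1$. Then the surviving sum is
\[
\delta_i(\mathfrak S_A(\mathbf x,\mathbf y)) \;=\; \sum_{\substack{w \in \perm(A),\, \ell(w)=\codim(X_A) \\ w>ws_i}} \mathfrak S_{ws_i}(\mathbf x,\mathbf y).
\]
The map $w \mapsto ws_i$ is injective on the indexing set, and by \cref{cor:perm-set-of-pi_i(A)-when-codim-drops} its image is exactly $\{u \in \perm(\pi_i(A)) : \ell(u) = \codim(X_{\pi_i(A)})\}$. Applying \cref{prop:asmpolyschubertexpansion} once more, this time to $\pi_i(A)$, the sum equals $\mathfrak S_{\pi_i(A)}(\mathbf x,\mathbf y)$.

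The single-variable statements follow by specializing $\mathbf y \to \mathbf 0$ (or by running the same argument using the single Schubert versions of each cited result). The proof is essentially a bookkeeping exercise once the machinery of \cref{sec:divided-differences} is in hand; the only subtlety is making sure no cancellation occurs between terms, which is precisely what \cref{cor:perm-set-of-pi_i(A)-when-codim-drops} rules out by guaranteeing the map $w \mapsto ws_i$ lands bijectively onto the minimum-length elements of $\perm(\pi_i(A))$. I expect this to be the only real obstacle, and it has already been settled in \cref{cor:perm-set-of-pi_i(A)-when-codim-drops}, so the proof should be quite short.
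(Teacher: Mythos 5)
Your proposal is correct and follows essentially the same route as the paper's proof: expand via \cref{prop:asmpolyschubertexpansion}, apply \cref{theorem:grothendieckandschubertrecurrence} termwise, use \cref{cor:stepdowncodimbyatmost1} to handle the equal-codimension case, and use \cref{cor:perm-set-of-pi_i(A)-when-codim-drops} to match the surviving terms with the expansion of $\mathfrak S_{\pi_i(A)}(\mathbf x,\mathbf y)$. The specialization to the single-variable case is also how the paper concludes.
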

\begin{proof}
Using the expansion of $\mathfrak S_A(\mathbf x,\mathbf y)$ from \cref{prop:asmpolyschubertexpansion} and linearity of $\delta_i$, we have
\begin{align*}
\delta_i( \mathfrak S_A(\mathbf x,\mathbf y))&=\sum_{\substack{w\in \perm(A)\\\ell(w)=\codim(X_A)}}\delta_i(\mathfrak S_w(\mathbf x,\mathbf y)).\\
\end{align*}
If $\codim(X_A)=\codim(X_{\pi_i(A)})$, then, by \cref{cor:stepdowncodimbyatmost1}, all minimum length elements of $\perm(A)$, i.e., those satisfying $\ell(w) = \codim(X_A)$, have an ascent at $i$.  If $w$ has an ascent at $i$, then Part (4) of \cref{theorem:grothendieckandschubertrecurrence} says that $\delta_i(\mathfrak S_w(\mathbf x,\mathbf y))=0$.  Hence the sum is $0$, as desired.

If $\codim(X_A)\neq\codim(X_{\pi_i(A)})$, then, by Parts (3) and (4) of \cref{theorem:grothendieckandschubertrecurrence}, 

\[
\delta_i (\mathfrak S_A(\mathbf x,\mathbf y))=\sum_{\substack{w\in \perm(A)\\\ell(w)=\codim(X_A)}}\delta_i(\mathfrak S_w(\mathbf x,\mathbf y))=\sum_{\substack{w\in \perm(A)\\\ell(w)=\codim(X_A)\\ws_i<w}}\mathfrak S_{ws_i}(\mathbf x,\mathbf y).
\]
Then applying \cref{cor:perm-set-of-pi_i(A)-when-codim-drops} and \cref{prop:asmpolyschubertexpansion} yields
\[
\sum_{\substack{w\in \perm(A)\\\ell(w)=\codim(X_A)\\ws_i<w}}\mathfrak S_{ws_i}(\mathbf x,\mathbf y)
=\sum_{\substack{u\in \perm(\pi_i(A))\\\ell(u)=\codim(X_{\pi_i(A)})}}\mathfrak S_{u}(\mathbf x,\mathbf y)
=\mathfrak S_{\pi_i(A)}(\mathbf x,\mathbf y),
\]
as desired.

The statement for single ASM Schubert polynomials follows by replacing the double ASM Schubert polynomials with the single versions throughout the proof or by specializing the double ASM Schubert polynomials appropriately.
\end{proof}

\begin{example}\label{ex:schubert-forgets-small-component}
    Let $A = \begin{pmatrix}
        0 & 1 & 0 & 0\\
        0 & 0 & 1 & 0\\
        1 & -1 & 0 & 1\\
        0 & 1 & 0 & 0
    \end{pmatrix}$, in which case $X_A = X_{3412} \cup X_{2341}$.  Notice that $\codim(X_{3412}) = 4$ and $\codim(X_{2341}) = 3$.  Then $\pi_2(A) = \begin{pmatrix}
        0 & 1 & 0 & 0\\
        1 & -1 & 1 & 0\\
        0 & 0 & 0 & 1\\
        0 & 1 & 0 & 0
    \end{pmatrix}$ and $X_{\pi_2(A)} = X_{3142} \cup X_{2341}$ is equidimensional of codimension $3$.  We compute \begin{align*}
    \delta_2(\mathfrak S_A(\mathbf x,\mathbf y)) &=\delta_2(\mathfrak S_{2341}) = \delta_2((x_1-y_1)(x_2-y_1)(x_3-y_1)) \\
    &= \frac{(x_1-y_1)(x_2-y_1)(x_3-y_1)-(x_1-y_1)(x_3-y_1)(x_2-y_1)}{x_2-x_3} &= 0 
    \end{align*}
    while 
    \begin{align*}
   \mathfrak S_{\pi_2(A)}(\mathbf x,\mathbf y) &= \mathfrak S_{3142}+\mathfrak S_{2341}\\
   &= (x_1-y_1)(x_1-y_2)(x_3-y_2)+(x_1-y_1)(x_1-y_2)(x_2-y_1)\\
    &\hspace{5.39cm}+(x_1-y_1)(x_2-y_1)(x_3-y_1). \qedhere
    \end{align*} 
\end{example}

\begin{proposition}\label{prop:same-codim-iff-symmetric-Schubert}
    Let $A\in \asm(n)$.
    The following are equivalent:
    \begin{enumerate}
        \item $\codim (X_A)=\codim (X_{\pi_i(A)})$.
        \item The single ASM Schubert polynomial $\mathfrak S_A(\mathbf x)$ is symmetric in $x_i$ and $x_{i+1}$.
        \item The double ASM Schubert polynomial $\mathfrak S_A(\mathbf x,\mathbf y)$ is symmetric in $x_i$ 
        and $x_{i+1}$.
    \end{enumerate} 
\end{proposition}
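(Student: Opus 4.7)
The proof will reduce to \cref{thm:codim-drops-iff-del-nontrivial-on-schub} via the elementary observation that, for any $f \in \mathbb{Z}[\mathbf{x},\mathbf{y}]$, the equality $\delta_i(f) = 0$ holds if and only if $f$ is symmetric in $x_i$ and $x_{i+1}$. This is immediate from the formula $\delta_i(f) = (f - s_i \cdot f)/(x_i - x_{i+1})$, since the numerator vanishes exactly when $s_i \cdot f = f$. Under this reformulation, (2) is equivalent to $\delta_i(\mathfrak S_A(\mathbf{x})) = 0$, and (3) is equivalent to $\delta_i(\mathfrak S_A(\mathbf{x},\mathbf{y})) = 0$.

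For the implications (1) $\Rightarrow$ (2) and (1) $\Rightarrow$ (3), the first case of \cref{thm:codim-drops-iff-del-nontrivial-on-schub} applies directly: when $\codim(X_A) = \codim(X_{\pi_i(A)})$, both $\delta_i(\mathfrak S_A(\mathbf{x}))$ and $\delta_i(\mathfrak S_A(\mathbf{x},\mathbf{y}))$ vanish.

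For the converses, I argue by contrapositive. Suppose $\codim(X_A) \neq \codim(X_{\pi_i(A)})$. By \cref{cor:stepdowncodimbyatmost1}, $\codim(X_{\pi_i(A)}) = \codim(X_A) - 1$, so by the second case of \cref{thm:codim-drops-iff-del-nontrivial-on-schub},
\[
\delta_i(\mathfrak S_A(\mathbf{x})) = \mathfrak S_{\pi_i(A)}(\mathbf{x}) \quad\text{and}\quad \delta_i(\mathfrak S_A(\mathbf{x},\mathbf{y})) = \mathfrak S_{\pi_i(A)}(\mathbf{x},\mathbf{y}).
\]
It remains to show that these polynomials are nonzero. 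By \cref{prop:asmpolyschubertexpansion}, $\mathfrak S_{\pi_i(A)}(\mathbf{x})$ expands as a sum of single Schubert polynomials indexed by the nonempty set of minimum-length elements of $\perm(\pi_i(A))$. Since single Schubert polynomials of distinct permutations in $S_n$ are $\mathbb{Z}$-linearly independent, this sum is nonzero, and specializing $\mathbf{y} \mapsto \mathbf{0}$ in $\mathfrak S_{\pi_i(A)}(\mathbf{x},\mathbf{y})$ recovers $\mathfrak S_{\pi_i(A)}(\mathbf{x})$, so the double polynomial is nonzero as well. Thus both (2) and (3) fail, completing the equivalences.

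The only substantive step beyond invoking \cref{thm:codim-drops-iff-del-nontrivial-on-schub} is the nonvanishing of $\mathfrak S_{\pi_i(A)}$, which follows from the standard $\mathbb{Z}$-linear independence of Schubert polynomials, so no real obstacle arises in the plan.
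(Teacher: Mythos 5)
Your proposal is correct and follows essentially the same route as the paper: reduce symmetry in $x_i,x_{i+1}$ to vanishing of $\delta_i$, apply \cref{thm:codim-drops-iff-del-nontrivial-on-schub} in both cases, and in the codimension-dropping case deduce nonvanishing of $\mathfrak S_{\pi_i(A)}$ from \cref{prop:asmpolyschubertexpansion} and the linear independence of Schubert polynomials (which the paper cites to \cite[Corollary 6]{HPSW20}), handling the double polynomial via the $\mathbf y\mapsto\mathbf 0$ specialization. The only cosmetic difference is that the paper concludes non-symmetry of $\mathfrak S_A(\mathbf x,\mathbf y)$ from non-symmetry of its specialization $\mathfrak S_A(\mathbf x)$, whereas you show $\mathfrak S_{\pi_i(A)}(\mathbf x,\mathbf y)\neq 0$ directly; these are equivalent observations.
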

\begin{proof}

Given $f\in \mathbb Z[\mathbf x,\mathbf y]$, we have that $f$ is symmetric in $x_i$ and $x_{i+1}$ if and only if $\delta_i(f)=0$. 
If $\codim (X_A)=\codim (X_{\pi_i(A)})$, then $\delta_i(\mathfrak S_A(\mathbf x))=\delta_i(\mathfrak S_A(\mathbf x,\mathbf y))=0$ by \cref{thm:codim-drops-iff-del-nontrivial-on-schub}.  Thus, both $\mathfrak S_A(\mathbf x)$ and $\mathfrak S_A(\mathbf x,\mathbf y)$ are symmetric in $x_i$ and $x_{i+1}$.

On the other hand, if $\codim (X_A) \neq \codim (X_{\pi_i(A)})$, then by \cref{thm:codim-drops-iff-del-nontrivial-on-schub}, we have $\delta_i(\mathfrak S_A(\mathbf x))=\mathfrak S_{\pi_i(A)}(\mathbf x)$.
By \cref{prop:asmpolyschubertexpansion}, $\mathfrak S_{\pi_i(A)}(\mathbf x)$ is a linear combination of Schubert polynomials.  Since $\perm(A)\neq \emptyset$, the sum is nonempty.  Because the set of Schubert polynomials indexed by permutations is linearly independent (see e.g., \cite[Corollary 6]{HPSW20}), $\mathfrak S_{\pi_i(A)}(\mathbf x)\neq 0$ and so $\mathfrak S_{A}(\mathbf x)$ is not symmetric in $x_i$ and $x_{i+1}$.  Since we obtain $\mathfrak S_{A}(\mathbf x)$ from $\mathfrak S_{A}(\mathbf x,\mathbf y)$ by setting each $y_j$ to $0$, $\mathfrak S_{A}(\mathbf x,\mathbf y)$ is also not symmetric in $x_i$ and $x_{i+1}$.
\end{proof}

\subsection{Derivatives} Define an operator $\nabla=\sum_{i=1}^n \frac{\partial}{\partial x_i}$.   Hamaker, Pechenik, Speyer, and Weigandt \cite{HPSW20} gave a formula for the application of $\nabla$ to single Schubert polynomials indexed by permutations, namely:
\[\nabla\left(\mathfrak S_w(\mathbf x)\right)=\sum_{s_iw<w}i\mathfrak S_{s_iw}(\mathbf x).\]
They used this formula to give a simple proof of Macdonald's reduced word formula for specializations of Schubert polynomials and also to prove a determinant conjecture of Stanley \cite{Stanley17}, which in turn implies that the weak order on the symmetric group has the strong Sperner property.  That the weak order has the strong Sperner property was first proved in \cite{GG20}.

In subsequent work, \cite{PSW24} generalized the derivative formula for Schubert polynomials to Grothendieck polynomials.  They used this formula to study the Castelnuovo--Mumford regularity of matrix Schubert varieties.  In this section, we state the analogous formula for ASM Grothendieck polynomials.

Write \[\des(A)=\{i \in [n-1] : A \text{ has a descent at } i\}.\] 
We define the \newword{major index} of $A\in \asm(n)$ to be $\maj(A)=\sum_{i\in \des(A)} i$.  We note that when $A\in S_n$, this definition agrees with the usual definition of major index.

Given $A\in \asm(n)$, we define $\pi_i^C(A)=\pi_i(A^T)^T$.  Equivalently, \begin{definition}
\label{def:pi_i^C}
    Given $A\in \asm(n)$ and $i\in[n-1]$, let 
    \[\pi_i^C(A)=\min\{B\in \asm(n):\rk_A(a,b)=\rk_B(a,b) \text{ for all } a,b\in [n] \text{ with } b\neq i\}.\]
\end{definition}

We compare to \cref{def:pi_i}. If one thinks of $\pi_i$ as operating on pairs of rows of an ASM, then $\pi_i^C$ is the analogous operator on pairs of columns.  In particular, for $w \in S_n$, $\pi_i^C(w) \in \{w, s_iw\}$.  
All lemmas and propositions proved for $\pi_i$ apply correspondingly to $\pi_i^C$.  In the proof of \cref{prop:derivatives}, below, we will once cite \cref{prop:intersectoperator} as justification for an equality concerning $\pi_i^C$.

Let  $E=\sum_{i=1}^n x_i\frac{\partial}{\partial x_i}$. The following proposition generalizes \cite[Theorem A.1]{PSW24}, which treats the case $A \in S_n$.

\begin{proposition}\label{prop:derivatives}
    Let $A\in \asm(n)$.  Then 
    \[\left(\maj(A^T)+\nabla-E\right)\mathfrak G_A(\mathbf x)=\sum_{\pi_i^C(A)<A}i\mathfrak G_{\pi_i^C(A)}(\mathbf x).\]
\end{proposition}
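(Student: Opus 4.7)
The plan is to proceed by downward induction on strong order on $\asm(n)$, with base case $A\in S_n$ handled by \cite[Theorem A.1]{PSW24}. For $A = w\in S_n$, note $\pi_i^C(w)=s_iw$ when $s_iw<w$ and $\pi_i^C(w)=w$ otherwise, and $s_iw<w$ iff $i\in\des(w^{-1})=\des(w^T)$. Hence $\maj(w^T)=\sum_{\pi_i^C(w)<w}i$, and the PSW identity reads exactly as the claim for $A=w$.

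For the inductive step, assume $A\in\asm(n)\setminus S_n$ and the formula holds for every $A'>A$. Write $\perm(A)=\{w_1,\ldots,w_k\}$ and $A_U:=\bigvee_{j\in U}w_j$ for $\emptyset\neq U\subseteq[k]$. By \cref{prop:I_A-intersection-of-schubs-in-perm} and \cref{cor:kandtwistk},
\[ \mathfrak G_A(\mathbf x)=\sum_{\emptyset\neq U\subseteq[k]}(-1)^{|U|-1}\mathfrak G_{A_U}(\mathbf x), \]
and each $A_U>A$ by \cref{lem:asmpermjoingoesup}, so the inductive hypothesis applies to every $A_U$. Setting $L_B:=\maj(B^T)+\nabla-E$ and decomposing $L_A=L_{A_U}+(\maj(A^T)-\maj(A_U^T))$ yields
\[ L_A\mathfrak G_A=\sum_U(-1)^{|U|-1}\bigl[L_{A_U}\mathfrak G_{A_U}+(\maj(A^T)-\maj(A_U^T))\mathfrak G_{A_U}\bigr]. \]
Next, the columnwise transposes of \cref{prop:intersectoperator} and \cref{prop:joindescentpart3} (immediate from applying matrix transpose to their statements) give $I_{\pi_i^C(A)}=\bigcap_j I_{\pi_i^C(w_j)}$ and $\pi_i^C(A_U)=\bigvee_{j\in U}\pi_i^C(w_j)$, so by \cref{cor:kandtwistk} again,
\[ \mathfrak G_{\pi_i^C(A)}(\mathbf x)=\sum_U(-1)^{|U|-1}\mathfrak G_{\pi_i^C(A_U)}(\mathbf x). \]
Expand the target right-hand side $\sum_{i\in\des(A^T)}i\,\mathfrak G_{\pi_i^C(A)}$ this way, swap sums, and split the inner sum over $i\in\des(A^T)$ according to whether $i\in\des(A_U^T)$ (in which case induction identifies $\sum_{i\in\des(A_U^T)}i\,\mathfrak G_{\pi_i^C(A_U)}$ with $L_{A_U}\mathfrak G_{A_U}$) or $i\in\des(A^T)\setminus\des(A_U^T)$ (in which case $\pi_i^C(A_U)=A_U$ and the summand becomes $i\,\mathfrak G_{A_U}$, assembling to $(\maj(A^T)-\maj(A_U^T))\mathfrak G_{A_U}$). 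Term-by-term comparison matches $L_A\mathfrak G_A$ above.

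The main obstacle is showing this two-case split is exhaustive, i.e.\ that $\des(A_U^T)\subseteq\des(A^T)$ for all $U$, so there is no lost case where $i$ is a column descent of some $A_U$ but not of $A$. This holds because if $i\notin\des(A^T)$, the columnwise version of \cref{cor:descent-of-A-implies-descent-in-perm} shows that no $w\in\perm(A)$ has $i$ as a column descent; in particular no $w_j$ with $j\in U$ does, and then the columnwise \cref{lemma:joindescent} forces $A_U=\bigvee_{j\in U}w_j$ to have no column descent at $i$. The rest of the argument is the inclusion-exclusion bookkeeping above.
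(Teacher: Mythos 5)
Your proposal is correct and follows essentially the same route as the paper: downward induction on strong order with the permutation case supplied by \cite[Theorem A.1]{PSW24}, inclusion--exclusion over $\perm(A)$ via \cref{cor:kandtwistk} and \cref{lem:asmpermjoingoesup}, and the columnwise versions of \cref{prop:intersectoperator} and \cref{prop:joindescentpart3} to reassemble the terms. The only difference is bookkeeping: the paper rewrites the inductive hypothesis as $(\nabla-E)\,\mathfrak G_{A_U}(\mathbf x)=\sum_{i\in[n-1]}i\left(\mathfrak G_{\pi_i^C(A_U)}(\mathbf x)-\mathfrak G_{A_U}(\mathbf x)\right)$, summing over all $i\in[n-1]$ so that no comparison of descent sets is ever needed, whereas you keep the $\maj$ terms explicit and must (and correctly do) verify $\des(A_U^T)\subseteq\des(A^T)$ via the columnwise \cref{cor:descent-of-A-implies-descent-in-perm} and \cref{lemma:joindescent}.
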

\begin{proof}

Note first that $i \in \des(A^T)$ if and only if $\pi_i^C(A)<A$, and so \begin{equation}\label{eq:maj(A^T)}
\maj(A^T) = \sum_{\pi_i^C(A) < A} i.
\end{equation}

We will proceed by induction on $\asm(n)$ under strong order. The longest permutation $w_0 \in S_n$ is the unique maximum element of this poset. The result holds for $w_0$ by \cite[Theorem A.1]{PSW24}.  Now take $A\neq w_0$, and assume the result holds for $B\in \asm(n)$ with $B>A$.

Let $\perm(A)=\{u_1,\ldots,u_k\}$.  By \cref{cor:kandtwistk} we have 
\[\mathfrak G_A(\mathbf x)=\sum_{\emptyset \neq U\subseteq [k]} (-1)^{|U|-1} \mathfrak G_{\vee \{u_j:j\in U\}}(\mathbf x).\]

For each $\emptyset \neq U \subseteq [k]$, we have $\vee \{u_j:j\in U\}>A$ by \cref{lem:asmpermjoingoesup}, and so by induction 
\begin{equation}
\label{eq:derivative-induction}
(\nabla-E)( \mathfrak G_{\vee \{u_j:j\in U\}}(\mathbf x)) 
= \sum_{i\in[n-1]} i \left(\mathfrak G_{\pi_i^C(\vee \{u_j:j\in U\})}(\mathbf x)-\mathfrak G_{\vee \{u_j:j\in U\}}(\mathbf x)\right).
\end{equation}

Applying $\nabla-E$ to both sides gives
\begin{align*}
(\nabla-E)(\mathfrak G_A(\mathbf x))&=(\nabla-E)\left(\sum_{\emptyset \neq U\subseteq [k]} (-1)^{|U|-1} \mathfrak G_{\vee \{u_j:j\in U\}}(\mathbf x) \right) \\
&=\sum_{\emptyset \neq U\subseteq [k]} (-1)^{|U|-1}(\nabla-E)( \mathfrak G_{\vee \{u_j:j\in U\}}(\mathbf x))  \\
&=\sum_{\emptyset \neq U\subseteq [k]} \left((-1)^{|U|-1}\sum_{i\in [n-1]} i (\mathfrak G_{\pi_i^C(\vee \{u_j:j\in U\})}(\mathbf x)-\mathfrak G_{\vee \{u_j:j\in U\}}(\mathbf x) )\right)  \\ &\hspace{10cm}\text{(\cref{eq:derivative-induction})}\\
&=\sum_{\emptyset \neq U\subseteq [k]} \left((-1)^{|U|-1}\sum_{i\in [n-1]} i (\mathfrak G_{\vee \{\pi_i^C(u_j):j\in U\}}(\mathbf x)-\mathfrak G_{\vee \{u_j:j\in U\}}(\mathbf x) )\right) \\
&\hspace{10cm} \text{(\cref{prop:intersectoperator})}\\
&=\sum_{i\in [n-1]}\left(i\sum_{\emptyset \neq U\subseteq [k]} (-1)^{|U|-1}  (\mathfrak G_{\vee \{\pi_i^C(u_j):j\in U\}}(\mathbf x)-\mathfrak G_{\vee \{u_j:j\in U\}}(\mathbf x) ) \right)\\
&=\sum_{i\in [n-1]}i  (\mathfrak G_{\pi_i^C(A)}(\mathbf x)-\mathfrak G_{A}(\mathbf x) ) \hspace{1cm} \text{(\cref{cor:kandtwistk} \mbox{ and }\cref{prop:intersectoperator})} \\
& =\left(\sum_{\pi_i^C(A)<A}i  (\mathfrak G_{\pi_i^C(A)}(\mathbf x)-\mathfrak G_{A}(\mathbf x) )\right)+\left(\sum_{\pi_i^C(A)=A}i  (\mathfrak G_{\pi_i^C(A)}(\mathbf x)-\mathfrak G_{A}(\mathbf x) )\right)\\
&=\left(\sum_{\pi_i^C(A)<A} i\mathfrak G_{\pi_i^C(A)}(\mathbf x)\right) - \sum_{\pi_i^C(A) < A} i \mathfrak G_A(\mathbf x)+0\\
&=\left(\sum_{\pi_i^C(A)<A} i\mathfrak G_{\pi_i^C(A)}(\mathbf x)\right) - \maj(A^T)\mathfrak G_A(\mathbf x) \hspace{1cm} \text{(\cref{eq:maj(A^T)}).} \qedhere
\end{align*}

\end{proof}

\section{Antichains of permutations}\label{s:antichains}

The purpose of this section is to generalize the results we have seen so far for ASM varieties to arbitrary unions of matrix Schubert varieties, which are indexed by antichains in $S_n$ under strong order.  The proofs of \cref{prop:intersectoperator2}, \cref{prop-DDO2}, and \cref{prop:derivatives2} rely on the results they generalize, each of which was proved using the rich combinatorial structure of $\asm(n)$.  In this way, this section demonstrates the value of studying $\asm(n)$, not merely as a method for gaining intuition about unions of matrix Schubert varieties, but as a valuable tool for proving these more general results.  

\subsection{Arbitrary unions of matrix Schubert varieties}

An \newword{antichain} in $S_n$ is a subset $\mathcal A\subseteq S_n$ so that for all $u,v\in \mathcal A$ with $u\neq v$, we have $u\not \leq v$ and $v \not \leq u$.  In other words, all pairs of distinct elements in $\mathcal A$ are incomparable under strong order.

 Let $\anti(n)$ be the set of nonempty antichains in $S_n$.  
Each $A\in \asm(n)$ can be uniquely identified by $\perm(A)\in \anti(n)$.  Thus, the map $A\mapsto \perm(A)$ defines an inclusion $\asm(n)\hookrightarrow \anti(n)$.  For this section, we will freely identify $A$ with the antichain $\perm(A)$.

If $\mathcal A\in \anti(n)$, we define $I_{\mathcal A}=\bigcap_{w \in \mathcal A} I_w$ and $X_{\mathcal A} = \mathbb{V}(I_\mathcal{A})$; hence, $X_{\mathcal A} = \bigcup_{w\in \mathcal A} X_w$, and $\codim(X_{\mathcal A})=\min\{\ell(w):w\in \mathcal A\}$.  In particular, the map $\mathcal A\mapsto X_{\mathcal A}$ defines a bijection between antichains of permutations and unions of matrix Schubert varieties. When $w\in S_n$, we have $\perm(w)=\{w\}$ and $X_{\{w\}}=X_w$ is a matrix Schubert variety.

Arbitrary unions of matrix Schubert varieties have been studied by Knutson \cite{Knu09}, by Bertiger \cite{Bertiger15}, 
and by this article's second and third authors \cite{KW23}.  

\subsection{Strong and weak order on antichains}

We define \newword{strong (Bruhat) order} on antichains by containment of varieties: $\mathcal A\leq \mathcal B$ if and only if $X_{\mathcal A}\supseteq X_{\mathcal B}$ if and only if $I_{\mathcal A}\subseteq I_{\mathcal B}$.  Restricted to ASMs (and to permutations), this is the usual strong order.  See \cref{fig:posetstrongantichain3} for the poset diagram of strong order on $\anti(3)$. 

For $i \in [n-1]$, we may also define operators $\pi_i$ which act on unions of matrix Schubert varieties componentwise, i.e.,
\[
\pi_i(X_{\mathcal A})=\bigcup_{w\in \mathcal A} X_{\pi_i(w)}.
\]  

Let $\mathcal{B}$ be the subset of $\{\pi_i(w): w \in \mathcal{A}\}$ consisting of its minimal elements.  Then $\mathcal{B} \in \anti(n)$, and $\bigcup_{w\in \mathcal A} X_{\pi_i(w)} = X_{\mathcal{B}}$.  In this way, the operation of $\pi_i$ on unions of matrix Schubert varieties in turn induces an operation on antichains: We let $\pi_i(\mathcal A)=\mathcal B$ if $\pi_i(X_{\mathcal A})=X_{\mathcal B}$.  If $\mathcal{A}=\perm(A)$ for some $A \in \asm(n)$, it follows from \cref{prop:intersectoperator} that $\pi_i(\mathcal{A})=\perm(\pi_i(A))$, i.e., the action of $\pi_i$ is the same whether we regard $\mathcal{A}$ as an ASM or as an antichain.  

We define the \newword{weak (Bruhat) order} on $\anti(n)$ to be the transitive closure of the covering relations $\mathcal A\prec \mathcal B$ if $\mathcal A\neq \mathcal B$ and $\mathcal A=\pi_i(\mathcal B)$ for some $i\in[n-1]$.  See \cref{fig:posetweakantichain3} for the poset diagram of weak order on $\anti(3)$.
  
\begin{figure}[!htb]
    \centering
    \begin{minipage}{.49\textwidth}
        \centering
\begin{tikzpicture}
\node (A) at (0,0) {$\{321\}$};
\node (B) at (-2,-1.2) {$\{231\}$};
\node (C) at (0,-2.4) {$\{231,312\}$};
\node (D) at (2,-1.2) {$\{312\}$};
\node (E) at (-2,-3.6) {$\{213\}$};
\node (F) at (0,-4.8) {$\{213,132\}$};
\node (G) at (2,-3.6) {$\{132\}$};
\node (H) at (0,-6) {$\{123\}$};

\draw[thick] (A) -- node[above]{} (B);
\draw[thick] (B) -- node[above]{} (C);
\draw[thick] (A) -- node[left]{} (D);
\draw[thick] (D) -- node[right]{}(C);
\draw[thick] (C) -- node[right]{}(E);
\draw[thick] (C) -- node[right]{}(G);
\draw[thick] (E) -- node[above]{} (F);
\draw[thick] (G) -- node[above]{} (F);
\draw[thick] (F) -- node[above]{} (H);
\end{tikzpicture}
\caption{The poset diagram of strong order on $\anti(3)$.}
\label{fig:posetstrongantichain3}
\end{minipage}
\begin{minipage}{.49\textwidth}
\centering
\begin{tikzpicture}
\node (A) at (0,0) {$\{321\}$};
\node (B) at (-2,-1.2) {$\{231\}$};
\node (C) at (0,-2.2) {$\{231,312\}$};
\node (D) at (2,-1.2) {$\{312\}$};
\node (E) at (-2,-3.6) {$\{213\}$};
\node (F) at (0,-4.6) {$\{213,132\}$};
\node (G) at (2,-3.6) {$\{132\}$};
\node (H) at (0,-6) {$\{123\}$};
\draw[->,thick] (A) -- node[above]{$\pi_1$} (B);
\draw[->,thick] (A) -- node[above]{$\pi_2$} (D);
\draw[->,thick] (B) -- node[left]{$\pi_2$} (E);
\draw[->,thick] (C) -- node[above]{$\pi_2$} (E);
\draw[->,thick] (C) -- node[above]{$\pi_1$} (G);
\draw[->,thick] (D) -- node[right]{$\pi_1$} (G);
\draw[->,thick] (E) to [bend right=25] node[left]{$\pi_1$} (H);
\draw[->,thick] (G) to [bend left=25] node[right]{$\pi_2$} (H);
\draw[->,thick] (F) to [bend right=25] node[left]{$\pi_1$} (H);
\draw[->,thick] (F) to  [bend left=25] node[right]{$\pi_2$} (H);
\end{tikzpicture}
\caption{The poset diagram of weak order on $\anti(3)$.}
\label{fig:posetweakantichain3}
\end{minipage}
\end{figure}

In $\anti(3)$, note that $\pi_1(\{213, 132\}) = \pi_2(\{213, 132\}) = \{123\}$. This example shows that it is no longer possible in this setting to uniquely identify weak order chains by the indices of the $\pi_i$'s.

\begin{remark}
If $\mathcal A=\perm(A)$ for some $A\in \asm(n)$, then the principle weak order ideal in $\anti(n)$ defined by $\mathcal A$ (i.e., $\{\mathcal B \in \anti(n): \mathcal B \preceq \mathcal A\}$) consists entirely of antichains that correspond to ASMs.  Likewise, given a singleton antichain (i.e., an antichain which corresponds to a permutation), the principal order ideal defined by this element consists entirely of permutations.  
\end{remark}

Our definition of weak order on $\anti(n)$ is motivated by the following extension of \cref{cor:pi_iorderpreserving} to arbitrary unions of matrix Schubert varieties.   
\begin{prop}\label{prop:intersectoperator2}
    Let $w_1, \ldots, w_r, u_1, \ldots, u_t \in S_n$ and $i \in [n-1]$.  If $\bigcap_{j=1}^r I_{w_j} \subseteq \bigcap_{k=1}^t I_{u_k}$, then $\bigcap_{j=1}^r I_{\pi_i(w_j)} \subseteq \bigcap_{k=1}^t I_{\pi_i(u_k)}$. 

    In the language of antichains, if $\mathcal{A}, \mathcal{B} \in \anti(n)$ with $\mathcal{A} \leq \mathcal{B}$ and $i \in [n-1]$, then $\pi_i(\mathcal{A}) \leq \pi_i(\mathcal{B})$.
\end{prop}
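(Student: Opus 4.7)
The plan is to reduce the antichain statement to the permutation case of the earlier weak-order-preserves-strong-order result (\cref{cor:pi_iorderpreserving}), using the fact that each matrix Schubert variety $X_w$ is irreducible.

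First, I would translate the hypothesis geometrically: $\mathcal{A} \leq \mathcal{B}$ is defined to mean $X_\mathcal{A} \supseteq X_\mathcal{B}$, i.e., $\bigcup_{j=1}^r X_{w_j} \supseteq \bigcup_{k=1}^t X_{u_k}$. Fix any $k \in [t]$. Since $X_{u_k}$ is irreducible (by Fulton \cite{Ful92}, matrix Schubert varieties are irreducible) and is contained in the finite union $\bigcup_{j=1}^r X_{w_j}$, there must exist some index $j = j(k) \in [r]$ with $X_{u_k} \subseteq X_{w_j}$. By definition of strong order, this means $u_k \geq w_j$ in strong Bruhat order on $S_n$.

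Next, I would apply \cref{cor:pi_iorderpreserving} (which we have proved for all of $\asm(n)$, and in particular for $S_n \subseteq \asm(n)$) to conclude that $\pi_i(u_k) \geq \pi_i(w_j)$, i.e., $X_{\pi_i(u_k)} \subseteq X_{\pi_i(w_j)}$. Taking the union over $k$,
\[
X_{\pi_i(\mathcal{B})} \;=\; \bigcup_{k=1}^t X_{\pi_i(u_k)} \;\subseteq\; \bigcup_{j=1}^r X_{\pi_i(w_j)} \;=\; X_{\pi_i(\mathcal{A})},
\]
which is precisely $\pi_i(\mathcal{A}) \leq \pi_i(\mathcal{B})$ in the antichain strong order. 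Equivalently, $\bigcap_{j=1}^r I_{\pi_i(w_j)} \subseteq \bigcap_{k=1}^t I_{\pi_i(u_k)}$.

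There is no serious obstacle: the combinatorial/geometric work was carried out in the ASM setting in \cref{cor:pi_iorderpreserving}, and the only new ingredient needed here is the elementary observation that irreducibility of each $X_{u_k}$ lets us pick a single dominating component $X_{w_{j(k)}}$, reducing the antichain comparison to a collection of comparisons between pairs of permutations. One minor bookkeeping remark I would include is that we do not need to worry about whether $\{\pi_i(w_j)\}$ or $\{\pi_i(u_k)\}$ are themselves antichains before taking minimal elements: the unions of matrix Schubert varieties they define are unchanged by pruning non-minimal members, so the containment of unions is exactly the definition of $\pi_i(\mathcal{A}) \leq \pi_i(\mathcal{B})$ as antichains.
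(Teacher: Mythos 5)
Your proposal is correct and is essentially the paper's own argument: the paper fixes $k$, uses that $I_{u_k}$ is prime to find a single $j$ with $I_{w_j}\subseteq I_{u_k}$, and then applies \cref{cor:pi_iorderpreserving} componentwise, which is exactly your step of using irreducibility of $X_{u_k}$ inside the finite union to find a dominating $X_{w_{j(k)}}$ — the same idea phrased geometrically rather than algebraically. Your closing remark about pruning to minimal elements likewise matches the paper's reduction of the antichain statement to the irredundant case.
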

\begin{proof}
    We must show that $\bigcap_{j=1}^r I_{\pi_i(w_j)} \subseteq I_{\pi_i(u_k)}$ for all $k \in [t]$.  Fix $k \in [t]$.  Because $I_{u_k}$ is a prime ideal, $\bigcap_{j=1}^r I_{w_j} \subseteq I_{u_k}$ implies that there exists $j \in [r]$ so that $I_{w_j} \subseteq I_{u_k}$.  Hence, $I_{\pi_i(w_j)}\subseteq I_{\pi_i(u_k)}$ by \cref{cor:pi_iorderpreserving}, and so $\bigcap_{j=1}^r I_{\pi_i(w_j)} \subseteq I_{\pi_i(u_k)}$.  

    The final sentence is obtained by considering the case when the sets $\{w_j : j \in [r]\}$ and $\{u_k : k \in [t]\}$ each form an antichain, i.e., when the intersections are irredundant.
\end{proof}

Our next goal is to show that \cref{prop:intersectoperator2} extends to arbitrary intersections of unions of matrix Schubert varieties.  In order to do that, we will first give a lemma describing how to express these intersections as unions of matrix Schubert varieties.  To do so, we will work with their defining ideals.

\begin{lemma}\label{lem:rewrite-sum-of-antichain-ideals}
    Let $\mathcal{A}_1, \ldots, \mathcal{A}_r \in \anti(n)$.  Then  \begin{equation}\label{eq:which-Antichain-Ideal}
I_{\mathcal{A}_1}+\cdots+I_{\mathcal{A}_r} = \bigcap_{(w_{\alpha_1}, \ldots, w_{\alpha_r}) \in \mathcal{A}_1 \times \cdots \times \mathcal{A}_r} I_{w_{\alpha_1}}+\cdots+I_{w_{\alpha_r}}.
    \end{equation}
\end{lemma}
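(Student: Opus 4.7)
The containment $(\subseteq)$ is immediate from the definitions. If $f = \sum_{j=1}^r f_j$ with $f_j \in I_{\mathcal{A}_j} = \bigcap_{w \in \mathcal{A}_j} I_w$, then for any tuple $(w_{\alpha_1}, \ldots, w_{\alpha_r}) \in \mathcal{A}_1 \times \cdots \times \mathcal{A}_r$ we have $f_j \in I_{w_{\alpha_j}}$, so $f \in I_{w_{\alpha_1}} + \cdots + I_{w_{\alpha_r}}$.

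For $(\supseteq)$, the plan is to show that both sides are radical and cut out the same subvariety of $\Mat(n)$, whence they must be equal. By \cref{prop:idealsum}, each ideal $I_{w_{\alpha_1}} + \cdots + I_{w_{\alpha_r}}$ coincides with the ASM ideal $I_{w_{\alpha_1} \vee \cdots \vee w_{\alpha_r}}$, so the right-hand side of \cref{eq:which-Antichain-Ideal} is an intersection of (radical) ASM ideals; hence the right-hand side is radical, and its vanishing locus is
\[
\bigcup_{(w_{\alpha_1}, \ldots, w_{\alpha_r})} X_{w_{\alpha_1} \vee \cdots \vee w_{\alpha_r}} = \bigcup_{(w_{\alpha_1}, \ldots, w_{\alpha_r})} \bigcap_{j=1}^r X_{w_{\alpha_j}} = \bigcap_{j=1}^r \bigcup_{w \in \mathcal{A}_j} X_w = \bigcap_{j=1}^r X_{\mathcal{A}_j},
\]
where the middle equality is set-theoretic distributivity of intersection over union. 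Since this is also the vanishing locus of the left-hand side, the two sides share a common radical.

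The remaining task, which I expect to be the main obstacle, is to show that $I_{\mathcal{A}_1} + \cdots + I_{\mathcal{A}_r}$ is itself radical; sums of radical ideals are not radical in general, so some extra input is required. I would invoke Knutson's Frobenius-splitting framework \cite[Section 7.2]{Knu09}, under which the family of ideals generated by Schubert determinantal ideals via sums and intersections consists of compatibly Frobenius-split ideals; in particular, every such ideal is radical. Since each $I_{\mathcal{A}_j}$ is an intersection of Schubert determinantal ideals, the sum $I_{\mathcal{A}_1} + \cdots + I_{\mathcal{A}_r}$ lies in this class and is therefore radical. Combined with the equality of vanishing loci established above, this yields the reverse containment and completes the proof. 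As a sanity check on the need for this input, one cannot hope to prove the equality purely by ideal-theoretic distributivity, since the identity $I + (J \cap K) = (I+J) \cap (I+K)$ fails for arbitrary ideals in a polynomial ring; the statement at hand is special precisely because of the rigid geometry of matrix Schubert varieties.
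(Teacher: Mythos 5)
Your forward containment, your identification of each $I_{w_{\alpha_1}}+\cdots+I_{w_{\alpha_r}}$ as the ASM ideal of the join (so that the right-hand side is radical), and your appeal to the Frobenius-splitting results of \cite{Knu09} for radicality of the sum are exactly the inputs the paper uses. The gap is in your finishing move: ``both sides are radical and cut out the same subvariety of $\Mat(n)$, hence they are equal'' is a Nullstellensatz argument, and the paper works over an \emph{arbitrary} field $\kappa$. Over a non-algebraically-closed field, two radical ideals with the same vanishing locus need not coincide: over $\mathbb{R}$, the ideals $(x^2+y^2)$ and $(x,y)$ are both radical and both vanish exactly at the origin, and over $\mathbb{F}_2$ the radical ideals $(x^2-x)$ and $(0)$ have the same zero set. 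So as written, your argument proves the lemma only when $\kappa$ is algebraically closed.

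The fix is small, and the paper's version of it is purely ideal-theoretic: since $I:=I_{\mathcal{A}_1}+\cdots+I_{\mathcal{A}_r}$ is radical, it equals the intersection of the primes containing it, so it suffices to show every prime $P\supseteq I$ contains the right-hand side $J$. Given such a $P$, for each $j$ one has $I_{\mathcal{A}_j}=\bigcap_{w\in\mathcal{A}_j}I_w\subseteq P$, and primeness of $P$ forces $I_{w_{\alpha_j}}\subseteq P$ for some $w_{\alpha_j}\in\mathcal{A}_j$; then $J\subseteq I_{w_{\alpha_1}}+\cdots+I_{w_{\alpha_r}}\subseteq P$. This is the ideal-theoretic avatar of your set-theoretic distributivity of intersection over union, and it makes your geometric step (and any Nullstellensatz or base-change-to-$\overline{\kappa}$ maneuver) unnecessary. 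With that substitution your argument matches the paper's proof; everything else in your proposal, including the observation that naive distributivity of sums over intersections of ideals fails in general and that the content lies in radicality of the sum, is on target.
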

\begin{proof}
Set $I = I_{\mathcal{A}_1}+\cdots+I_{\mathcal{A}_r}$ and $J = \bigcap_{(w_{\alpha_1}, \ldots, w_{\alpha_r}) \in \mathcal{A}_1 \times \cdots \times \mathcal{A}_r} I_{w_{\alpha_1}}+\cdots+I_{w_{\alpha_r}}$.

The containment $I \subseteq J$ is a routine check from the definition of intersection.  

    It follows from the Frobenius splitting arguments of \cite[Section 1.2]{BK05} and \cite[Section 7.2]{Knu09} that $I$ is a radical ideal. Because each $I_{w_{\alpha_1}}+\cdots+I_{w_{\alpha_r}}$ is an ASM ideal, hence radical, and an intersection of radical ideals is radical, $J$ is also a radical ideal.  

     Hence, it suffices to show that every prime ideal containing $I$ also contains $J$.  Fix a prime ideal $P$ containing $I$.  Then $I_{\mathcal{A}_j} \subseteq I$ for each $j \in [r]$.  Because $P$ is prime and contains the intersection $I_{\mathcal{A}_j} = \bigcap_{w \in {\mathcal{A}_j}} I_w$, there must be some $w_{\alpha_j} \in {\mathcal{A}_j}$ so that $I_{w_{\alpha_j}} \subseteq P$.  By fixing such an $w_{\alpha_j}$ for each $j \in [r]$, we find an ideal $I_{w_{\alpha_1}}+\cdots+I_{w_{\alpha_r}}$ contained in $P$ and containing $J$. Hence $P$ contains $J$.
\end{proof}

The following proposition generalizes \cref{prop:joindescentpart3}.  

\begin{prop}\label{prop:pi-commutes-with-intersection-antichain-varieties}
    Let $\mathcal{A}_1, \ldots, \mathcal{A}_r \in \anti(n)$ and $i \in [n-1]$.  There exists $\mathcal{B} \in \anti(n)$ such that $I_{\mathcal{A}_1}+\cdots+I_{\mathcal{A}_r} = I_\mathcal{B}$.  Moreover, $I_{\pi_i(\mathcal{A}_1)}+\cdots+I_{\pi_i(\mathcal{A}_r)} = I_{\pi_i(\mathcal{B})}$.  
\end{prop}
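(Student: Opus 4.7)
The plan is to construct $\mathcal{B}$ explicitly from the $\mathcal{A}_j$'s and then establish the second assertion by showing that both $I_{\pi_i(\mathcal{B})}$ and $I_{\pi_i(\mathcal{A}_1)}+\cdots+I_{\pi_i(\mathcal{A}_r)}$ coincide with a common third expression. First, I will apply \cref{lem:rewrite-sum-of-antichain-ideals} to obtain
\[
I := I_{\mathcal{A}_1}+\cdots+I_{\mathcal{A}_r} = \bigcap_{\vec w\in \mathcal{A}_1\times\cdots\times\mathcal{A}_r}\bigl(I_{w_1}+\cdots+I_{w_r}\bigr).
\]
Each summand is the ASM ideal $I_{C_{\vec w}}$ for the join $C_{\vec w} := w_1\vee\cdots\vee w_r \in \asm(n)$ by \cref{prop:idealsum}, and \cref{prop:I_A-intersection-of-schubs-in-perm} then expands $I_{C_{\vec w}} = \bigcap_{v\in\perm(C_{\vec w})} I_v$. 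Letting $\mathcal{B}\in\anti(n)$ denote the set of strong-order-minimal elements of $\bigcup_{\vec w}\perm(C_{\vec w})$, and observing that non-minimal permutations are redundant in an intersection of Schubert determinantal ideals (since $v\geq b$ in strong order gives $I_v\supseteq I_b$), one obtains $I = I_\mathcal{B}$.

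Next, I will show that $I_{\pi_i(\mathcal{B})}$ equals $\bigcap_{\vec w\in\prod_j\mathcal{A}_j}\sum_j I_{\pi_i(w_j)}$. Because $\pi_i(\mathcal{B})$ is by definition the antichain of minimal elements of $\{\pi_i(b) : b\in\mathcal{B}\}$, we have $I_{\pi_i(\mathcal{B})} = \bigcap_{b\in\mathcal{B}} I_{\pi_i(b)}$. Enlarging this intersection to range over all of $\bigcup_{\vec w}\perm(C_{\vec w})$ changes nothing: any non-minimal $s$ in this union satisfies $s\geq b$ for some $b\in\mathcal{B}$, and hence $\pi_i(s)\geq\pi_i(b)$ by \cref{cor:pi_iorderpreserving}, so $I_{\pi_i(s)}\supseteq I_{\pi_i(b)}$ is redundant. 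Applying \cref{prop:intersectoperator} to each inner intersection gives $\bigcap_{v\in\perm(C_{\vec w})} I_{\pi_i(v)} = I_{\pi_i(C_{\vec w})}$, and \cref{prop:joindescentpart3} then rewrites $I_{\pi_i(C_{\vec w})} = \sum_j I_{\pi_i(w_j)}$, producing the desired expression.

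Finally, \cref{lem:rewrite-sum-of-antichain-ideals} applied to the antichains $\pi_i(\mathcal{A}_j)$ gives $I_{\pi_i(\mathcal{A}_1)}+\cdots+I_{\pi_i(\mathcal{A}_r)} = \bigcap_{\vec u\in\prod_j\pi_i(\mathcal{A}_j)}\sum_j I_{u_j}$. Each $u_j\in\pi_i(\mathcal{A}_j)$ lifts to some $w_j\in\mathcal{A}_j$ with $\pi_i(w_j) = u_j$, so every term $\sum_j I_{u_j}$ already appears among the terms $\sum_j I_{\pi_i(w_j)}$ indexed by $\vec w\in\prod_j\mathcal{A}_j$; conversely, for any such $\vec w$ one can choose $\vec u$ with $u_j\leq \pi_i(w_j)$ componentwise (since $\pi_i(\mathcal{A}_j)$ is the minimal-element set of $\{\pi_i(w) : w\in\mathcal{A}_j\}$), giving $\sum_j I_{u_j}\subseteq\sum_j I_{\pi_i(w_j)}$ and thereby rendering the $\vec w$-terms redundant in the intersection. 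Thus both intersections equal $\bigcap_{\vec w\in\prod_j\mathcal{A}_j}\sum_j I_{\pi_i(w_j)}$, matching the expression from the previous paragraph and completing the proof. I anticipate the main obstacle to be the careful bookkeeping between minimal-element and general-element intersections, mediated throughout by \cref{cor:pi_iorderpreserving}; the algebraic substance is carried by \cref{prop:joindescentpart3} and \cref{prop:intersectoperator}, which package the interaction of $\pi_i$ with joins and with intersections, respectively.
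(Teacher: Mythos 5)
Your proof is correct and follows essentially the same route as the paper's: both expand the sum via \cref{lem:rewrite-sum-of-antichain-ideals}, push $\pi_i$ through the resulting union of ASM varieties using \cref{prop:joindescentpart3}, and then recollect via \cref{lem:rewrite-sum-of-antichain-ideals} applied to the $\pi_i(\mathcal{A}_j)$. The only difference is bookkeeping: you handle the redundancy between $\{\pi_i(w):w\in\mathcal{A}_j\}$ and its minimal-element set explicitly with \cref{cor:pi_iorderpreserving} and \cref{prop:intersectoperator}, whereas the paper delegates this to \cref{prop:intersectoperator2} and works at the level of varieties rather than ideals.
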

\begin{proof}
Interpreted geometrically, \cref{lem:rewrite-sum-of-antichain-ideals} tells us that the intersection $\bigcap_{i \in [r]} X_{\mathcal{A}_i}$ of the unions $\bigcup_{w \in \mathcal{A}_i} X_w$ of matrix Schubert varieties is again a union of matrix Schubert varieties, specifically a union of the ASM varieties determined by the $I_{w_{\alpha_1}}+\cdots +I_{w_{\alpha_r}}$ (\cref{prop:idealsum}), each of which is in turn a union of matrix Schubert varieties (\cref{prop:I_A-intersection-of-schubs-in-perm}).  Hence, $\mathcal{B} \in \anti(n)$ as in the statement of the proposition exists.

Because each $I_{w_{\alpha_1}}+\cdots+I_{w_{\alpha_r}}$ is an ASM ideal and hence defines a union of matrix Schubert varieties, the second line of the following string of equalities follows from the definition of $\pi_i$ on $\anti(n)$, together with \cref{prop:intersectoperator2}, which allows us to evaluate $\pi_i$ on components of a union, even if that union is written with redundancies.  We compute
\begin{align*}
X_{\pi_i(\mathcal{B})}  = \pi_i(X_{\mathcal{B}}) &= \pi_i\left(\bigcup_{(w_{\alpha_1}, \ldots, w_{\alpha_r}) \in \mathcal{A}_1 \times \cdots \times \mathcal{A}_r} X_{w_{\alpha_1}}\cap \cdots \cap X_{w_{\alpha_r}}\right) & (\mbox{\cref{lem:rewrite-sum-of-antichain-ideals}})\\ 
&= \bigcup_{(w_{\alpha_1}, \ldots, w_{\alpha_r}) \in \mathcal{A}_1 \times \cdots \times \mathcal{A}_r} \pi_i(X_{w_{\alpha_1}} \cap\cdots \cap X_{w_{\alpha_r}}) & (\mbox{Definition of $\pi_i$})\\
&= \bigcup_{(w_{\alpha_1}, \ldots, w_{\alpha_r}) \in \mathcal{A}_1 \times \cdots \times \mathcal{A}_r} X_{\pi_i(w_{\alpha_1})} \cap \cdots \cap X_{\pi_i(w_{\alpha_r})} & (\mbox{\cref{prop:joindescentpart3}})\\
&= \bigcup_{(u_{\alpha_1}, \ldots, u_{\alpha_r}) \in \pi_i(\mathcal{A}_1) \times \cdots \times \pi_i(\mathcal{A}_r)} X_{u_{\alpha_1}}\cap\cdots \cap X_{u_{\alpha_r}} \\
& = X_{\pi_i(\mathcal{A}_1)}\cap \cdots \cap X_{\pi_i(\mathcal{A}_r)} & (\mbox{\cref{lem:rewrite-sum-of-antichain-ideals}}).
\end{align*}
Hence $I_{\pi_i(\mathcal{A}_1)}+\cdots+I_{\pi_i(\mathcal{A}_r)} = I_{\pi_i(\mathcal{B})}$, as desired.
\end{proof}

\begin{remark}
    One could know of the existence of $\mathcal{B} \in \anti(n)$ such that $I_{\mathcal{A}_1}+\cdots+I_{\mathcal{A}_r} = I_{\mathcal{B}}$ even without the explicit characterization of \cref{lem:rewrite-sum-of-antichain-ideals}.  Recalling that $I_{\mathcal{A}_1}+\cdots+I_{\mathcal{A}_r}$ is radical (\cite{BK05, Knu09}), one may then note that the variety it defines is invariant under the left-right action of the lower and upper Borel subgroups and then use \cite[Section 3]{Ful92} to infer that the variety must be a union of matrix Schubert varieties. 
\end{remark}

\subsection{Twisted K-polynomials of antichains}

Given $\mathcal A\in \anti(n)$, we write $\mathfrak G_{\mathcal A}(\mathbf x,\mathbf y)=\tilde{\mathcal K}(S/I_{\mathcal A};\mathbf x,\mathbf y)$.
Also, define \[\mathfrak G_{\mathcal A}(\mathbf x)=\mathfrak G_{\mathcal A}(x_1,\ldots, x_n,0,\ldots,0).\]

\begin{proposition}
\label{prop-DDO2}
    Let $\mathcal A\in\anti(n)$.  Then $\pi_i(\mathfrak G_{\mathcal A}(\mathbf x,\mathbf y))=\mathfrak G_{\pi_i(\mathcal A)}(\mathbf x,\mathbf y)$ and $\pi_i(\mathfrak G_{\mathcal A}(\mathbf x))=\mathfrak G_{\pi_i(\mathcal A)}(\mathbf x).$
\end{proposition}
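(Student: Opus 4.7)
The plan is to mimic the structure of the proof of \cref{prop-DDO} on the ASM side, leveraging the inclusion-exclusion machinery of \cref{cor:kandtwistk} to reduce the antichain statement to the ASM statement just proved. Write $\mathcal{A} = \{w_1,\ldots,w_k\}$ so that $I_{\mathcal{A}} = \bigcap_{j \in [k]} I_{w_j}$ is an intersection of ASM (in fact, Schubert determinantal) ideals. By \cref{cor:kandtwistk} applied to this intersection,
\[
\mathfrak G_{\mathcal{A}}(\mathbf x, \mathbf y) = \sum_{\emptyset \neq U \subseteq [k]} (-1)^{|U|-1} \, \tilde{\mathcal K}\!\left(S \Big/ \sum_{j \in U} I_{w_j};\, \mathbf x, \mathbf y\right).
\]
For each nonempty $U \subseteq [k]$, \cref{prop:idealsum} tells us that $\sum_{j\in U} I_{w_j} = I_{A_U}$ where $A_U := \bigvee_{j \in U} w_j \in \asm(n)$, so each summand on the right is the double ASM Grothendieck polynomial $\mathfrak G_{A_U}(\mathbf x, \mathbf y)$.

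Next I would apply the K-theoretic divided difference $\pi_i$ termwise. Since $A_U \in \asm(n)$, \cref{prop-DDO} yields $\pi_i(\mathfrak G_{A_U}(\mathbf x, \mathbf y)) = \mathfrak G_{\pi_i(A_U)}(\mathbf x, \mathbf y)$. Moreover, by \cref{prop:joindescentpart3}, $I_{\pi_i(A_U)} = \sum_{j \in U} I_{\pi_i(w_j)}$, so
\[
\pi_i(\mathfrak G_{\mathcal{A}}(\mathbf x, \mathbf y)) = \sum_{\emptyset \neq U \subseteq [k]} (-1)^{|U|-1} \, \tilde{\mathcal K}\!\left(S \Big/ \sum_{j \in U} I_{\pi_i(w_j)};\, \mathbf x, \mathbf y\right).
\]

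The remaining step is to recognize the right-hand side as the inclusion-exclusion expansion of $\mathfrak G_{\pi_i(\mathcal{A})}(\mathbf x, \mathbf y)$. By the definition of the $\pi_i$ action on $\anti(n)$ (and as exploited in \cref{prop:intersectoperator2}), we have $I_{\pi_i(\mathcal{A})} = \bigcap_{j \in [k]} I_{\pi_i(w_j)}$, where the intersection may be redundant but is still valid. Applying \cref{cor:kandtwistk} to this intersection of ASM (Schubert) ideals gives exactly the displayed right-hand side, which proves the double version. The single version then follows either by setting $\mathbf y = \mathbf 0$ throughout or by repeating the argument with $\mathfrak G_A(\mathbf x)$ in place of $\mathfrak G_A(\mathbf x, \mathbf y)$.

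I expect no serious obstacle: the heavy lifting has been done in \cref{prop-DDO}, \cref{prop:joindescentpart3}, and \cref{cor:kandtwistk}, and the remaining content is just the bookkeeping that inclusion-exclusion is compatible with the $\pi_i$ operator because each summand indexed by $U$ transforms according to the ASM version of the identity. The only point requiring a little care is confirming that \cref{cor:kandtwistk} applies even when the $I_{\pi_i(w_j)}$ are not all distinct (equivalently, when $\{\pi_i(w_j) : j \in [k]\}$ fails to be an antichain), which is immediate from the statement of \cref{lemma:hilbertaltsum} since nothing there requires the intersection to be irredundant.
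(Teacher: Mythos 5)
Your proposal is correct and is essentially the paper's own proof: the paper proves this by rerunning the argument of \cref{prop-DDO}, invoking \cref{prop-DDO} itself for the joins $A_U=\bigvee_{j\in U}w_j$ (instead of the induction used on the ASM side) and replacing the appeal to \cref{prop:intersectoperator}/\cref{cor:pi_iorderpreserving} by the definition of the $\pi_i$ action on $\anti(n)$, exactly as you do. Your added remark that \cref{lemma:hilbertaltsum} and \cref{cor:kandtwistk} tolerate a redundant intersection is a sound and worthwhile point of care.
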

\begin{proof}
    One follows the outline of the proof of \cref{prop-DDO}, with two modifications: (1) The citations of \cref{cor:pi_iorderpreserving} are replaced by the definition of the action of $\pi_i$ on $\anti(n)$, and (2) the statement for joins of the indexing permutations of components of $X_{\mathcal{A}}$ that arises from the application of \cref{cor:kandtwistk} is treated by appeal to \cref{prop-DDO} itself rather than via induction.
\end{proof}

The analog of \cref{prop:derivatives} holds in this setting as well.
We define $\des(\mathcal A)=\{i:\pi_i(\mathcal A)\neq \mathcal A\}$ and $\maj(\mathcal A)=\sum_{i\in \des(\mathcal A)} i$. 
Also, let $\mathcal A^T=\{u^{-1}:u\in \mathcal A\}$ and define $\pi_i^C(\mathcal A)=\pi_i(\mathcal A^T)^T$.  

\begin{proposition}
\label{prop:derivatives2}
    Let $\mathcal A\in \anti(n)$.  Then
    \[(\maj(\mathcal A^T)+\nabla-E)\mathfrak G_{\mathcal A}(\mathbf x)=\sum_{\pi_i^C(\mathcal A)<\mathcal  A}i\mathfrak G_{\pi_i^C(\mathcal A)}(\mathbf x).\]
\end{proposition}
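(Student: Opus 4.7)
The plan is to reduce the statement to \cref{prop:derivatives} via inclusion-exclusion. Writing $I_{\mathcal A} = \bigcap_{w \in \mathcal A} I_w$ as an intersection of Schubert determinantal ideals (hence of ASM ideals) lets us expand $\mathfrak G_{\mathcal A}$ as a signed sum of ASM Grothendieck polynomials indexed by joins of subsets of $\mathcal A$; since each such join is an actual ASM, we may apply \cref{prop:derivatives} termwise and then reassemble.

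First, by \cref{cor:kandtwistk} together with \cref{prop:idealsum},
\[\mathfrak G_{\mathcal A}(\mathbf x) = \sum_{\emptyset \neq U \subseteq \mathcal A} (-1)^{|U|-1}\, \mathfrak G_{\vee U}(\mathbf x),\]
where $\vee U$ denotes the join taken in $\asm(n)$. Applying $\nabla - E$ to both sides and using \cref{prop:derivatives} on each ASM term (in the rearranged form $(\nabla - E)\mathfrak G_A = \sum_{i \in [n-1]} i \bigl(\mathfrak G_{\pi_i^C(A)} - \mathfrak G_A\bigr)$, in which $i$-summands with $\pi_i^C(A) = A$ contribute zero), we obtain
\[(\nabla - E)\mathfrak G_{\mathcal A}(\mathbf x) = \sum_{i \in [n-1]} i \sum_{\emptyset \neq U \subseteq \mathcal A} (-1)^{|U|-1}\bigl(\mathfrak G_{\pi_i^C(\vee U)}(\mathbf x) - \mathfrak G_{\vee U}(\mathbf x)\bigr).\]

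For fixed $i$, the column analog of \cref{prop:joindescentpart3} gives $\pi_i^C(\vee U) = \vee\{\pi_i^C(w) : w \in U\}$; this column analog is immediate from the identity $\rk_{A^T}(a,b) = \rk_A(b,a)$, which shows that transposition preserves joins and intertwines $\pi_i$ with $\pi_i^C$. Applying \cref{cor:kandtwistk} in reverse to the Schubert determinantal ideals $\{I_{\pi_i^C(w)}\}_{w \in \mathcal A}$, and using the column analog of \cref{prop:intersectoperator} to recognize $\bigcap_{w \in \mathcal A} I_{\pi_i^C(w)}$ as $I_{\pi_i^C(\mathcal A)}$ (any redundancies from non-minimal elements of $\{\pi_i^C(w) : w \in \mathcal A\}$ have no effect on the intersection), yields
\[\sum_{\emptyset \neq U \subseteq \mathcal A} (-1)^{|U|-1}\, \mathfrak G_{\pi_i^C(\vee U)}(\mathbf x) = \mathfrak G_{\pi_i^C(\mathcal A)}(\mathbf x).\]
Combined with the original inclusion-exclusion for $\mathfrak G_{\mathcal A}$, this collapses the displayed identity to
\[(\nabla - E)\mathfrak G_{\mathcal A}(\mathbf x) = \sum_{i \in [n-1]} i\bigl(\mathfrak G_{\pi_i^C(\mathcal A)}(\mathbf x) - \mathfrak G_{\mathcal A}(\mathbf x)\bigr).\]

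Terms with $\pi_i^C(\mathcal A) = \mathcal A$ vanish, and the remaining $-\mathfrak G_{\mathcal A}$ contributions collect to $-\maj(\mathcal A^T)\mathfrak G_{\mathcal A}$ via the identity $\maj(\mathcal A^T) = \sum_{\pi_i^C(\mathcal A) < \mathcal A} i$, which is immediate from the definitions of $\des(\mathcal A^T)$ and $\pi_i^C$. Rearranging gives the claim. I do not anticipate a serious obstacle: the main technical point is simply checking that the column-variant identities (joins and intersections commuting with $\pi_i^C$ in the antichain setting) hold, and these follow formally from the transposition symmetry, exactly as the corresponding row-variant results were handled earlier in the paper.
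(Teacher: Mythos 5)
Your argument is correct and is essentially the paper's own proof: the paper simply says to follow the outline of the proof of \cref{prop:derivatives}, appealing to that proposition itself on the joins $\vee U$ (which are honest ASMs) rather than to an inductive hypothesis, which is exactly your inclusion--exclusion-plus-termwise-application scheme, followed by the same recollection and $\maj(\mathcal A^T)$ bookkeeping. The only small attribution quibble is that the identification $\bigcap_{w\in\mathcal A} I_{\pi_i^C(w)} = I_{\pi_i^C(\mathcal A)}$ should be credited to the definition of the $\pi_i$ (hence $\pi_i^C$) action on $\anti(n)$ rather than to a column analog of \cref{prop:intersectoperator} (which is stated for ASMs, and $\mathcal A$ need not be one), as your own parenthetical about redundant components essentially acknowledges.
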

\begin{proof}
One follows the outline of the proof of \cref{prop:derivatives}, with two modifications: (1) The citations of \cref{cor:pi_iorderpreserving} are replaced by the definition of the action of $\pi_i$ on $\anti(n)$, and (2) the statement for joins of the indexing permutations of components of $X_{\mathcal{A}}$ that arises from the application of \cref{cor:kandtwistk} is treated by appeal to \cref{prop:derivatives} itself rather than via induction.
\end{proof}

\begin{example}
    Let $\mathcal A=\{213,132\}$ and $\mathcal B=\{231,312\}$.  Note that $I_{213}+I_{132} = I_{\mathcal B}$ and that $I_{231}+I_{312} = I_{321}$.  We have 
    \begin{align*}
        \mathfrak G_{\mathcal A}(\mathbf x,\mathbf y)&=\mathfrak G_{213}(\mathbf x,\mathbf y)+\mathfrak G_{132} (\mathbf x,\mathbf y)-\mathfrak G_{\mathcal B} (\mathbf x,\mathbf y)\\
        &=\mathfrak G_{213}(\mathbf x,\mathbf y)+\mathfrak G_{132}(\mathbf x,\mathbf y)-\mathfrak G_{231}(\mathbf x,\mathbf y)-\mathfrak G_{312}(\mathbf x,\mathbf y)+\mathfrak G_{321}(\mathbf x,\mathbf y).
    \end{align*}
    In particular,
    \[\mathfrak G_{\mathcal A}(\mathbf x)=2x_1+x_2-2x_1x_2-x_1^2+x_1^2x_2.\]
We have $\maj(\mathcal A^T)=3$.  When we apply $\maj(\mathcal A^T)+\nabla-E$ to $\mathfrak G_{\mathcal A}(\mathbf x)$, we obtain 
\begin{align*}
    (\maj(\mathcal A^T)+\nabla-E)(\mathfrak G_{\mathcal A}(\mathbf x))&=\mathfrak G_{\pi_1^C(\mathcal A)}(\mathbf x)+2\mathfrak G_{\pi_2^C(\mathcal A)}(\mathbf x)\\
    &=3\mathfrak G_{123}(\mathbf x). \qedhere
\end{align*}
\end{example}

\begin{remark}
    Analogs of \cref{lem:permpiAascent}, \cref{cor:stepdowncodimbyatmost1}, \cref{cor:descent-of-A-implies-descent-in-perm}, \cref{cor:some-pi-drops-codim}, \cref{prop:asmpolyschubertexpansion}, \cref{thm:codim-drops-iff-del-nontrivial-on-schub}, \cref{prop:same-codim-iff-symmetric-Schubert} hold in $\anti(n)$ by following the given proofs with the replacement of \cref{cor:pi_iorderpreserving} by the definition of $\pi_i$ on $\anti(n)$; none of these arguments require already knowing the result for $\asm(n)$.
\end{remark}

\section*{Acknowledgements}

This project started as part of the Virtual Workshop for Women in Commutative Algebra and Algebraic Geometry, sponsored by the Fields Institute and organized by Megumi Harada and Claudia Miller.  The authors would like to thank Zach Hamaker and Vic Reiner for helpful discussions on weak order on ASMs.  They also thank Oliver Pechenik for helpful conversations.

\bibliographystyle{amsalpha} 
\bibliography{asm.bib}
\end{document}